\author[C.-J. Yao]{Chengjian Yao}
\address{ Institute of Mathematical Sciences, ShanghaiTech University, 393 Middle Huaxia Road, Pudong, 
	Shanghai, 201210 China.}
\email{yaochj@shanghaitech.edu.cn}
\def\YYint#1#2#3{{\setbox0=\hbox{$#1{#2#3}{\int}$}
		\vcenter{\hbox{$#2#3$}}\kern-.52\wd0}}
\theoremstyle{plain}
\newtheorem{theorem}{Theorem}[section]
\newtheorem{lemma}[theorem]{Lemma}
\newtheorem{corollary}[theorem]{Corollary}
\newtheorem{proposition}[theorem]{Proposition}
\newtheorem{conjecture}[theorem]{Conjecture}
\newtheorem*{theorem*}{Theorem}
\theoremstyle{definition}
\newtheorem{definition}[theorem]{Definition}
\newtheorem{definition-theorem}[theorem]{Definition-Theorem}
\newtheorem*{acknowledgements}{Acknowledgements}
\theoremstyle{remark}
\newtheorem{remark}[theorem]{Remark}
\newtheorem{question}[theorem]{Question}
\numberwithin{equation}{section} \setcounter{tocdepth}{1}
\newcommand{\tr}{\operatorname{tr}}
\newcommand{\Aut}{\operatorname{Aut}}
\newcommand{\Vol}{\operatorname{Vol}}
\newcommand{\surj}{\to\kern-1.8ex\to}
\title[Limit of Einstein-Bogomol'nyi metrics]{The dissolving limit and large volume limit of Einstein-Bogomol'nyi metrics}
\date{\today}
\begin{document}
\maketitle

\begin{abstract}
	We study the limits of Einstein-Bogomol'nyi metrics on $\mathbf{P}^1$, which is the solution to a dimensional reduction of Einstein-Maxwell-Higgs system in dimension four, in two regimes. In one regime called the ``dissolving limit''  where the volume of the metrics is approaching the admissible lower bound, it exhibits a pattern that all the vortices are dissolving similar to the Bradlow limit in the study of vortices on Riemann surfaces. In another regime called the ``large volume limit'' where the volume of of the metrics is approaching infinity, the magnetic field is concentrating around the zeros of the Higgs field.  In the meantime, the volume-normalized underlying metric is approaching the Euclidean cone metric determined by the Higgs field in the case of stable Higgs field. Moreover, by studying the large volume limit of Yang's solution for a strictly polystable Higgs field, for each natural number $N'$ we recover the Einstein-Bogomol'nyi metrics on $\mathbf{C}$ which is asymptotically cylindrical at exponential rate and with total string number $N'$ firstly discovered by Linet and Yang. 
\end{abstract}

\section{Introduction}
We are interested in a dimensional reduction of four dimensional Einstein-Maxwell-Higgs equations which is static and enjoys a translation invariance in the third spatial dimension \cite{Yang}. This system has interesting physical background, and describes a model of ``cosmic string'' which provides a potential way of explaining the genesis of \emph{large scale structures} in the universe based on topological defects during the rapid process of phase transition in the very early epoch of the universe. 

Mathematically, the problem is characterized by a system coupling gravity and gauge. The coupling indeed reveals some phenomenons which do not show up in pure gravity theory or pure gauge theory. For instance, in a related but parallel setting, there exists static regular Einstein-Yang-Mills solution for which gravity is coupled with an $SU(2)$ connection \cite{BaMc,SWYM}.

A particular self-dual reduction of Einstein-Maxwell-Higgs system, the so-called Einstein-Bogomol'nyi equations, attracts lot of interests, and many fundamental results have been obtained in the literature, for instance \cite{ComGib, Linet, Yang, Yang4, Yang5, HS}. A keen link of this reduced system with K\"ahler geometry is revealed in \cite{AlGaGa, AlGaGa2}, through an infinite dimensional momentum map picture which is a very successful guiding principle in the study of K\"ahler geometry in recent decades. Using this, we prove a general existence theorem of Einstein-Bogomol'nyi metrics on Riemann sphere in \cite{FPY}, extending the results \cite{Linet, Yang, HS} in a satisfactory way. 

Let us detail out how to formulate Einstein-Bogomol'nyi equation in terms of K\"ahler  geometry. Let $\Sigma$ be a Riemann surface (either compact or noncompact), $L$ be a holomorphic line bundle over $\Sigma$ with a given holomorphic section $\bm\phi$. Consider a pair $(\omega, h)$ which consists of a K\"ahler metric $\omega$ on $\Sigma$ and a Hermitian metric $h$ on $L$, which satisfies the following system
\begin{equation}
\label{eqn:GV0}
\left\{
\begin{array}{r}
iF_{h} + \frac{1}{2} (|\bm\phi|_{h}^2 - \tau)\omega
 = 0,\\
S_{\omega} + \alpha ( \Delta_{\omega} + \tau )( |\bm\phi|_{h}^2 - \tau ) 
= 0. 
\end{array}
\right.
\end{equation}
where $\alpha>0$ is the coupling constant, $\tau>0$ is the symmetry breaking scale. We should make clear the convention we are using here. For the Hermitian metric $h$,  $F_h=-\partial\bar\partial \log h$;  for any smooth function $\eta$ on $\Sigma$, $\Delta_\omega \eta= \tr_\omega \left(-2i\partial\bar\partial \eta\right)= -2 g^{z\bar z} \frac{\partial^2 \eta}{\partial z\partial\bar z}$ when $\omega=i g_{z\bar z}dz\wedge d\bar z$ in a holomorphic local coordinate; moreover, $S_\omega = \tr_\omega \text{Ric } \omega$.

In case $\Sigma$ is compact, the existence of smooth solution to the system implies that $\Sigma=\mathbf{P}^1$ and $\alpha\tau N=1$ where $N=\int_\Sigma c_1(L)$. Take $\omega_0=\omega_{FS}=\frac{idz\wedge d\bar z}{\left(1+|z|^2\right)^2}$ to be the standard Fubini-Study metric on $\mathbf{P}^1$ with volume $2\pi$,  and $h_0=h_{FS}^N$ (where $h_{FS}=\frac{1}{1+|z|^2}$) and writing $h=h_0 e^{2f}$. The second equation in the system is written as 
\begin{equation}
\label{eqn:Ricci-form}
\text{Ric }\omega-2\alpha i\partial\bar\partial |\bm\phi|_h^2 - 2\alpha\tau iF_h=0,
\end{equation}
i.e. 
\begin{align*}
-i\partial\bar\partial \log \frac{\omega}{\omega_0} 
+ 2\omega_0
-2\alpha i\partial\bar\partial |\bm\phi|_h^2 
-2\alpha\tau \left( N\omega_0 - 2i\partial\bar\partial f\right)
=0, 
\end{align*}
which implies that 
\begin{align*}
\log \frac{\omega}{\omega_0} + 2\alpha |\bm\phi|_h^2 -4\alpha\tau f 
= 
\log \frac{1}{\lambda}
\end{align*}
for some constant $\lambda>0$. Plugging the expression 
\[
\omega 
= 
\frac{1}{\lambda}e^{4\alpha\tau f -2\alpha|\bm\phi|_h^2}\omega_0
\]
back into the first equation of the system, it is then transformed to the following PDE 

\begin{equation}
\label{eqn:HS-lambda}
\Delta_{\omega_0} f 
 =
\frac{1}{2 \lambda}
(\tau - |\bm\phi|^2 e^{2f} ) e^{4\alpha\tau f
	-2\alpha |\bm\phi|^2 e^{2f}} 
- 
N
\end{equation}
about $f$, which is called the \emph{Einstein-Bogomol'nyi equation} with parameter $\lambda$. 

Conversely, for any $f$ satisfying \eqref{eqn:HS-lambda} with some $\lambda>0$, it is immediate to see the pair $\left(\omega, h\right)= \left(\frac{1}{\lambda}e^{4\alpha\tau f -2\alpha|\bm\phi|_h^2}\omega_0, h_0e^{2f} \right)$ satisfies \eqref{eqn:GV0}. Such a pair is called an \emph{Einstein-Bogomol'nyi metric} \cite{Yang, Al-Ga-Ga-P}, and we will also denote it by $(\omega, h,\bm\phi)$ if the dependence on the particular Higgs field is to be emphasized.

The study of this particular single semi-linear PDE with one extra parameter $\lambda>0$ is initiated by Yang in \cite{Yang} where he obtains existence of smooth solutions for sufficiently small $\lambda$ under the numerical assumption that either ``$n_j<\frac{N}{2}$ for all $j\in \{1,2, \cdots, d\}$" where $\bm\phi$ vanishes at $d$ distinct points $p_1, \cdots, p_d$ with multiplicities $n_1, \cdots, n_d$ respectively. Moreover, in \cite{Yang5}, existence of $S^1$ symmetric solution is established when $\bm\phi$ is assumed to vanishes at the north and south pole of $S^2$ with equal multiplicity.  Along this line, in \cite{Sohn, HS} Han-Sohn and Sohn show that under the numerical assumption``$n_j<\frac{N}{2}$ for all $j$", there exists some $\lambda_c>0$ such that for any $\lambda\in (0,\lambda_c)$ there exists truly multiple solutions to \eqref{eqn:HS-lambda}, and it is conjectured therein that for any $\lambda>\lambda_c$ there exists no solutions to \eqref{eqn:HS-lambda}. From PDE point of view, the assumption on the multiplicities $n_j$ appears to be only ``technical'', even though \cite{Yang4} does show the nonexistence of $S^1$ invariant solution if all zeros the Higgs field coincide at one point. It's shown in \cite{Al-Ga-Ga-P} Einstein-Bogomol'nyi equation recast in the form of \eqref{eqn:GV0} has a moment map interpretation as in the well-studied case of Hermitian-Einstein metrics and K\"ahler-Einstein metrics. Under this new point of view, the seemingly technical assumption on the multiplicities shows up more naturally as a stability condition. In terms of algebro-geometric language, the assumptions on the multiplicities in the above mentioned existence results precisely mean the divisor $[\bm\phi=0]\in S^N\mathbf{P}^1$ is polystable under the standard $PSL(2,\mathbf{C})$ action in the sense of Geometry Invariant Theory \cite{Mum}. Concretely speaking, suppose $[\bm\phi=0]=\sum_{j=1}^d n_j p_j$, the Higgs field $\bm\phi$ is called stable if $n_j<\frac{N}{2}$ for all $j\in \{1,\cdots, d\}$, and it is called strictly polystable if $d=2$ and $p_1=p_2$.

The methods and results of \cite{Al-Ga-Ga-P, FPY} reveals that the more geometric equation \eqref{eqn:GV0} provides a useful way of studying \eqref{eqn:HS-lambda} which is absent purely based on PDE point of view. Concretely speaking, it is obvious from the first equation of \eqref{eqn:GV0} that any Einstein-Bogomol'nyi metric $(\omega, h)$ on a compact surface must satisfy $\Vol_\omega >\frac{4\pi N}{\tau}:=\underline V$, which is referred as the \emph{admissible lower bound} in this paper. It is proved in \cite{FPY} that for any $V\in \left(\underline{V}, +\infty\right)$ there exists a solution $(\omega, h,\bm\phi)$ to \eqref{eqn:GV0} with $\Vol_\omega=V$ provided $\bm\phi$ is polystable under the $PSL(2,\mathbf{C})$ action in the sense of Geometric Invariant Theory. The way of obtaining those solutions is via a continuity method starting from one particular solution obtained by Yang \cite{Yang} and deforming the volume parameter in the admissible interval $(\underline{V}, +\infty)$. Normalizing the volume for the family of metrics arising in the continuity method to be $2\pi$, we are actually solving the family of equations 

\begin{equation}
\label{eq:FPY-continuity}
\begin{array}{l}
iF_{\tilde h} + \frac{1}{2} \frac{V}{2\pi} (|\bm\phi|_{\tilde h}^2 - \tau)\widetilde\omega 
= 0,\\
S_{\widetilde \omega} + \alpha ( \Delta_{\widetilde\omega} + \tau \frac{V}{2\pi} )( |\bm\phi|_{\tilde h}^2 - \tau )  = 0.
\end{array}
\end{equation}
for $\widetilde \omega\in [\omega_{FS}]$, with the varying parameter $V\in \left(\underline{V}, +\infty\right)$. 

Together with the existence result for the strictly polystable case \cite{Yang4}, the picture for the existence problem of Einstein-Bogomol'nyi metrics on $S^2$ is more or less clear. However, the uniqueness for Einstein-Bogomol'nyi metrics with fixed volume and fixed Higgs field has not been widely explored yet. The only known case is in the strictly polystable situation when the metric is assumed to be $S^1$ symmetric, i.e. the case studied by \cite{Yang4}. Considering the existence result and the above continuity method of varying parameter $V$, it is very natural to consider what might happen in case $V$ tends to the boundary of the admissible interval $\left(\underline V, +\infty\right)$. 

The main aim of this article is to study the limiting behavior of Einstein-Bogomol'nyi metrics with fixed Higgs field in two regimes. In term of the volume parameter, one regime is $V\to \underline V$ and the other is $V\to +\infty$, which is referred as \emph{dissolving limit} and \emph{large volume limit} respectively. 

As $V$ tends to the lower bound $\underline V$, we show that the state function $\left|\bm\phi\right|_h^2$ converges to \emph{zero} uniformly while the K\"ahler metric converges to a suitable multiple of the standard Fubini-Study metric $\omega_{FS}$ on $\mathbf{P}^1$ and the curvature of the Hermitian metric also converges to a suitable multiple of $\omega_{FS}$ (modulo holomorphic automorphism of $\mathbf{P}^1$), see Theorem \ref{thm:dissolving}. It shows a pattern that the unevenness of curvature (or ``vortices'') caused by the Higgs field ``disappears'' in the limit, thus the terminology ``dissolving limit'' is acquired. This represents a similar feature to the moduli space of $N$-vortex solution on $\mathbf{P}^1=S^2$ when the volume of the background round metrics tends to $\underline V=\frac{4\pi N}{\tau}$ \cite{BM}. 

In 
As $V$ tends to $+\infty$, we divides the study into two sub cases: stable case and strictly polystable case, since the geometric picture turns to be rather different. In the strictly polystable case where the Einstein-Bogomol'nyi metrics on $\mathbf{P}^1$ obtained by Linet \cite{Linet} and Yang \cite{Yang4} are relatively more explicitly described as solutions to certain ODE, we are able to study the limit in a quite detailed way. In both regime of limit, the Riemannian metric enjoys the nice property of having uniformly bounded curvature and uniform lower bound on the injectivity radius (Theorem \ref{thm:injectivity}). The metric becomes more and more ``round'' as $V\to \underline V$, while becomes longer and longer (while the size of the central equator is kept bounded) as $V\to +\infty$. Based at points on the central equator, the metric converges to a flat cylinder and the Hermitian metric converges to a Hermitian metric with vanishing curvature. However, based at a fixed zero of the Higgs field and exploring the translation symmetry of the ODE, or $\mathbf{C}^*$ symmetry of $\mathbf{P}^1$, an Einstein-Bogomol'nyi metric whose metric is asymptotically cylindrical and Hermitian metric is asymptotically flat can be constructed. This metric firstly appears in \cite{Linet} (c.f. also \cite{Yang}), and the analysis in the current article puts it into the general framework about ``large volume limit'' of Einstein-Bogomol'nyi metrics on compact surface.

In the stable case, we show that the Einstein-Bogomol'nyi metrics coming from the so-called \emph{maximal} solution of \eqref{eqn:HS-lambda} (partially obtained in \cite{Yang, HS}) exhibits an interesting behavior in the large volume limit. When normalized to a fixed constant volume, the curvature of the Riemannian metric blows up around the zeros of the Higgs field, and the Riemannian metric is proved to converge in Gromov-Hausdorff sense to a flat conical metric $\omega_{(\bm\phi)}=\left|\bm\phi\right|_{h_0}^{-\frac{4}{N}} \omega_0$ determined by $\bm\phi$ on $\mathbf{P}^1$, which has a cone angle $2\pi\beta_j=2\pi\left( 1-\frac{2n_j}{N}\right)$ at $p_j$ for each $j$ (the assumption $2n_j<N$ is also necessary for the existence of such metric on $S^2$). The curvature of the Hermitian metric converges to the Dirac delta current $2\pi \sum_{j=1}^d n_j[p_j]$, and $|\bm\phi|_h^2$ converges to the constant $\tau$ in $C^\infty_{loc}$ sense away from $\{p_1, \cdots, p_d\}$. 

As Euclidean cone metrics show up quite naturally in the large volume limit of Einstein-Bogomol'nyi metrics, the moduli space of Euclidean cone metrics with designated apex curvature (firstly studied by Thurston \cite{Th}) could possibly be viewed as an adiabatic limit of the moduli space of Einstein-Bogomol'nyi metrics with finite volume as the volume tends to $+\infty$, see the discussion in the final section. We also make one conjecture about the large volume limit of Einstein-Bogomol'nyi metrics when the Higgs fields are varying. We leave the study of moduli spaces, including the uniqueness of Einstein-Bogomol'nyi metric with fixed admissible volume and fixed Higgs field, for future investigations.

Geometrically speaking, in the presence of coupling between the spacetime and matter field, in the large volume regime, the flux density/magnetic field strength concentrates more and more along the string locations (corresponding to the zeros of the Higgs field), and the spacetime curvature also concentrates around these strings. The concentration is approximately of cylindrical type in the strictly polystable case and of conical type in the stable case.  

In the literature, several authors have studied large area limit/adiabatic limit of solutions to Abelian vortex equations when the volume of the underlying Riemann surface grows to infinity, \cite{Doan, HJS}. This is named as London limit in the mathematical physics literature about superconducting \cite{Sy}. The convergence result about the Einstein-Bogomol'nyi metrics shows that while the curvature and holomorphic section exhibit similar pattern of convergence, the presence of coupling indeed has back-reaction on the underlying gravity.

\section{Volume and Temper}
In suitable places, we use EB metric for the abbreviation of Einstein-Bogomol'nyi metric. Fixing the symmetry-breaking scale $\tau>0$ and holomorphic line bundle $L$ on $\mathbf{P}^1$ of degree $N$. Define

\begin{itemize}
	\item The full moduli space:
	\begin{equation*}
	\begin{split}
	\mathfrak{M}_{EB}(L, \tau)
	=
	\{
	(\omega, h, \bm\phi)|
	(\omega,h)\text{ is an EB metric with Higgs field }\bm\phi\in H^0(L)	\},
	\end{split}
	\end{equation*}
	\item The partial moduli space: let $\bm\phi\in H^0(L)$, 
	\begin{equation*}
	\begin{split}
	\label{def:moduli-space}
	\mathfrak{M}_{EB}(L, \tau;\bm\phi)
	:=
	\{(\omega, h)| (\omega,h)\text{ is an EB metric for the given }\bm\phi\}.
	\end{split}
	\end{equation*}
\end{itemize}

\noindent Due to the importance of the volume parameter \cite{FPY}, we can refine the above definitions to get the \emph{fixed volume moduli spaces}:

\begin{itemize}
	\item 
	$\mathfrak{M}_{EB}(L, \tau; V)=\{(\omega, h,\bm\phi)\in \mathfrak{M}_{EB}(L, \tau)| \Vol_\omega=V\}$,
	\item 
	$\mathfrak{M}_{EB}(L, \tau; \bm\phi, V)=\{(\omega, h)\in \mathfrak{M}_{EB}(L,\tau; \bm\phi)| \Vol_\omega=V\}$.
\end{itemize}

\begin{definition}[temper]	
	For each Einstein-Bogomol'nyi metric $(\omega, h, \bm\phi)$ on $\mathbf{P}^1$, the corresponding parameter $\lambda$ in \eqref{eqn:HS-lambda} is 
	\begin{equation}
	\lambda 
	= 
	\frac{1}{\Vol_\omega} \int_\Sigma e^{4\alpha\tau f-2\alpha |\bm\phi|_h^2}\omega_0
	=
	\frac{1}{\Vol_\omega}\int_\Sigma |\bm\phi|_h^{4\alpha\tau}|\bm\phi|_{h_0}^{-4\alpha\tau} e^{-2\alpha|\bm\phi|_h^2}\omega_0. 
	\end{equation}
	For simplicity, we give the terminology \emph{temper} to this parameter $\lambda$ and to emphasize the particular dependence we also write $\lambda=\lambda\left(\omega, h, \bm\phi\right)$. 
\end{definition}

It is an interesting question to determine the geometric or physical meaning (not clear yet) of $\lambda$ for an Einstein-Bogomol'nyi metric since it plays important role as an auxiliary parameter in finding the solutions \cite{Yang, Yang4, HS}. To keep more consistent with the literature, we consider $v=2f$, then it satisfies the following nonlinear semi-linear PDE
\begin{equation}
\label{eqn:HS-lambda-v}
\Delta_{\omega_0} v 
=
\frac{1}{\lambda}
(\tau - |\bm\phi|^2 e^v ) e^{2\alpha\tau v
	-2\alpha |\bm\phi|^2 e^v} - 2N. 
\end{equation}
Any solution $v$ gives rise to a solution $\left(\omega, h\right)= \left(\frac{1}{\lambda}e^{2\alpha\tau v -2\alpha|\bm\phi|_h^2}\omega_0, h_0e^v \right)$
to \eqref{eqn:GV0} with respect to the Higgs field $\bm\phi$.

For stable Higgs field $\bm\phi$, the next proposition shows that the temper converges to $0$ if the volume of the Einstein-Bogomol'nyi metrics goes to either end of the admissible interval $\left(\underline V, +\infty\right)$. 
\begin{proposition}
	\label{prop:alternative}
	Let $\bm\phi$ be stable, and $\left(\omega_n, h_n, \bm\phi\right)\in \mathfrak{M}_{EB}\left(L, \tau;\bm\phi\right)$ be a sequence such that one of the two conditions holds
	\begin{enumerate}
		\item[(i)] $\lim_{n\to +\infty} \Vol_{\omega_n}=+\infty$;
		\item[(ii)] $\lim_{n\to +\infty}\Vol_{\omega_n}= \underline V$, 
	\end{enumerate}
	then $\lim_{n\to +\infty}\lambda\left(\omega_n, h_n, \bm\phi\right)=0$. 
\end{proposition}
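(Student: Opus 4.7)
The plan is to anchor both statements to the defining identity for the temper,
\begin{equation*}
\lambda\,\Vol_\omega
\;=\; \int_{\mathbf{P}^1} e^{4\alpha\tau f-2\alpha|\bm\phi|_h^2}\,\omega_0
\;=\; \int_{\mathbf{P}^1} |\bm\phi|_h^{4\alpha\tau}\,|\bm\phi|_{h_0}^{-4\alpha\tau}\,e^{-2\alpha|\bm\phi|_h^2}\,\omega_0,
\end{equation*}
and to exploit two free ingredients: the pointwise bound $|\bm\phi|_h^2\leq\tau$, which comes from the maximum principle applied to $\log|\bm\phi|_h^2$ via the first equation of \eqref{eqn:GV0}; and the integrability $\int_{\mathbf{P}^1}|\bm\phi|_{h_0}^{-4\alpha\tau}\omega_0<+\infty$, which holds precisely because $4\alpha\tau n_j=4n_j/N<2$ under the stability hypothesis (using $\alpha\tau N=1$). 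This single integrability is what makes both cases close.

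For case (i), substituting $|\bm\phi|_h^2\leq\tau$ and $e^{-2\alpha|\bm\phi|_h^2}\leq 1$ into the identity immediately gives
\begin{equation*}
\lambda\,\Vol_{\omega_n}\;\leq\;\tau^{2\alpha\tau}\int_{\mathbf{P}^1}|\bm\phi|_{h_0}^{-4\alpha\tau}\omega_0\;=:\;C\;<\;+\infty,
\end{equation*}
so $\lambda\leq C/\Vol_{\omega_n}\to 0$.

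Case (ii) is more delicate since $\Vol_{\omega_n}$ stays bounded. I would start from the identity $\int|\bm\phi|_h^2\,\omega=\tau(\Vol_\omega-\underline V)$ obtained by integrating the first equation of \eqref{eqn:GV0}, and rewrite it via $\omega=\lambda^{-1}e^{4\alpha\tau f-2\alpha|\bm\phi|_h^2}\omega_0$ as
\begin{equation*}
\int_{\mathbf{P}^1}|\bm\phi|_{h_0}^2\,e^{2(1+2\alpha\tau)f}\,e^{-2\alpha|\bm\phi|_h^2}\,\omega_0\;=\;\lambda\,\tau\,(\Vol_\omega-\underline V).
\end{equation*}
Using $e^{-2\alpha|\bm\phi|_h^2}\in[e^{-2\alpha\tau},1]$, the unweighted integral $\int|\bm\phi|_{h_0}^2\,e^{2(1+2\alpha\tau)f}\omega_0$ is then bounded by $e^{2\alpha\tau}\lambda\tau(\Vol_\omega-\underline V)$. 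The key step is H\"older's inequality with exponents $p=1+N/2$ and $q=1+2/N$, tuned so that $p\cdot 4\alpha\tau=2(1+2\alpha\tau)$ and $2q/p=4\alpha\tau$:
\begin{equation*}
\int_{\mathbf{P}^1} e^{4\alpha\tau f}\omega_0\;\leq\;\Bigl(\int_{\mathbf{P}^1}|\bm\phi|_{h_0}^2\,e^{2(1+2\alpha\tau)f}\omega_0\Bigr)^{1/p}\Bigl(\int_{\mathbf{P}^1}|\bm\phi|_{h_0}^{-4\alpha\tau}\omega_0\Bigr)^{1/q}.
\end{equation*}
Combined with $\lambda\Vol_\omega\leq\int_{\mathbf{P}^1} e^{4\alpha\tau f}\omega_0$, this yields an inequality of the shape $\lambda^{1/q}\Vol_{\omega_n}\leq C'(\Vol_{\omega_n}-\underline V)^{1/p}$, from which $\lambda\to 0$ as $\Vol_{\omega_n}\to\underline V$.

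The main obstacle, and essentially the content of the argument, is the H\"older step in case (ii): one has to align the exponents so that the weight $|\bm\phi|_{h_0}^{-4\alpha\tau}$ appearing in the second factor is \emph{exactly} the one whose integrability is granted by stability. This matching is not an accident -- the same numerical condition $n_j<N/2$ underlies both the existence of the metrics and the closability of the scheme -- and one should expect the argument to require a genuinely different input in the strictly polystable case, where the weight is no longer integrable.
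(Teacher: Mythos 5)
Your argument is correct, and it is genuinely different from the one in the paper. The paper proves the proposition by contradiction and compactness: assuming $\lambda_n\geq\lambda_0>0$, it invokes the a priori estimates of Han--Sohn (Lemma 2.6 of \cite{HS}, valid for stable $\bm\phi$ and $\lambda$ bounded away from $0$) to get uniform $C^k$ bounds on $v_{\lambda_n}$, extracts a subsequential limit which is itself an Einstein-Bogomol'nyi metric, and concludes that $\Vol_{\omega_n}$ subconverges to a value in the open interval $\left(\underline V,+\infty\right)$, contradicting either hypothesis. Your route is a direct, quantitative one: you combine the defining identity $\lambda\Vol_\omega=\int e^{4\alpha\tau f-2\alpha|\bm\phi|_h^2}\omega_0$ with the universal bound $|\bm\phi|_h^2\leq\tau$ and the $L^1$-integrability of $|\bm\phi|_{h_0}^{-4\alpha\tau}$ (which is exactly where stability enters, via $4\alpha\tau n_j=4n_j/N<2$), getting $\lambda\leq C/\Vol_{\omega_n}$ in case (i); in case (ii) you feed the integrated first Bogomol'nyi equation $\int|\bm\phi|_h^2\omega=\tau(\Vol_\omega-\underline V)$ into a H\"older inequality whose exponents $p=1+N/2$, $q=1+2/N$ are matched via $\alpha\tau N=1$ so that the conjugate factor is precisely the integrable weight, yielding $\lambda^{1/q}\underline V\leq C'\left(\Vol_{\omega_n}-\underline V\right)^{1/p}$. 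I checked the exponent bookkeeping ($p\cdot4\alpha\tau=2(1+2\alpha\tau)$ and $2q/p=4\alpha\tau$) and it is consistent. What your approach buys is self-containedness (no reliance on the external estimates of \cite{HS}) and explicit rates, namely $\lambda=O(\Vol_{\omega_n}^{-1})$ and $\lambda=O\bigl((\Vol_{\omega_n}-\underline V)^{2/N}\bigr)$; what the paper's approach buys is the stronger compactness statement (subsequential smooth convergence to a limit EB metric when $\lambda$ is bounded below), which it reuses elsewhere. Your closing remark that the scheme must break in the strictly polystable case because $|\bm\phi|_{h_0}^{-4\alpha\tau}$ is then non-integrable is consistent with Remark \ref{remark:volume-large}, where the paper notes that $\Vol\to+\infty$ with $\lambda\not\to0$ does occur for strictly polystable $\bm\phi$.
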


\begin{proof}
	Argue by contradiction. Suppose there exists $\lambda_0>0$ such that $\lambda_n\geq \lambda_0$. 
	Lemma 2.6 of \cite{HS} implies that (in case $\bm\phi$ is stable) if $v_\lambda$ is a solution to equation \eqref{eqn:HS-lambda-v} for $\lambda\geq\lambda_0$, then for any $\gamma\in(0,1)$ and $p\in (1,+\infty)$ there exists a constant $C$ depending only on $\lambda_0, \gamma, p$ such that 
	\begin{align}
	\label{eqn:bound-potential}
	|\!|v_\lambda|\!|_{C^\gamma} 
	& \leq C, \\
	|\!| v_\lambda|\!|_{W^{2,p}}
	& \leq C. 
	\end{align}
	Then according to the higher order elliptic estimate to the equation \eqref{eqn:HS-lambda-v},  for any $k\in \mathbb{N}^+$ there exists $C_k>0$ such that the $C^k$ norm for $v_\lambda$ is uniformly bounded by $C_k$. 
	
	Applying this estimate to our sequence $v_{\lambda_n}$ coming out of $\left(\omega_n, h_n, \bm\phi\right)$, we can extract a subsequence, still denoted by $v_{\lambda_n}$, such that $v_{\lambda_n}\to v_{\lambda_\infty}$ in $C^2$ sense which solves \eqref{eqn:HS-lambda-v} with parameter $\lambda_\infty>0$ (as $\lambda_n\in [\lambda_0,\lambda_c]$ for any $n$). This implies that $\Vol_{\omega_n}\to \Vol_{g_\infty}$ where $g_\infty = \frac{1}{\lambda_\infty}e^{2\alpha\tau v_{\lambda_\infty} -2\alpha |\bm\phi|^2 e^{v_{\lambda_\infty}}}g_0$ and $h_\infty=h_0 e^{v_{\lambda_\infty}}$ is an Einstein-Bogomol'nyi metric. This is clearly a contradiction with the conditions on the volume behaviors since $\Vol_{g_\infty}\in \left(\underline V, +\infty\right)$. 
\end{proof}

 On the other hand, let $\{v_\lambda\}_{(0,\lambda_1]}$ be a family of solutions to $\eqref{eqn:HS-lambda}_{\lambda}$, then there must hold 
\begin{equation}
\label{fact:C0-blow-up}
\lim_{\lambda\to 0} |\!| v_{\lambda}|\!|_{C^0}=+\infty.
\end{equation}
Argue by contradiction. Suppose there is a sequence $\lambda_i\to 0$ such that  $|\!|v_{\lambda_i}|\!|_{C^0}$ is uniformly bounded, then integrating the first equation in \eqref{eqn:GV0} we obtain
\begin{equation}
\begin{split}
0= \lim_{i\to +\infty} 4\pi N \lambda_i
& = 
\lim_{\lambda\to 0} \int_\Sigma
e^{2\alpha\tau v_{\lambda_i} -2\alpha e^{u_0+v_{\lambda_i}}}  (\tau - e^{u_0+v_{\lambda_i}} )\text{dvol}_{g_0}.\\
\end{split}
\end{equation}
Since the integrand is positive, this implies (by \emph{Riesz's Theorem} in real analysis) there exists a subsequence, still denoted by  $\lambda_i\to 0$ such that 
\[
v_{\lambda_i}\longrightarrow -u_0+\log \tau, \;\;a. e. \text{ on }\Sigma,
\]
contradicting the assumption on the uniform bound of $|\!|v_{\lambda_i}|\!|_{C^0}$.

The estimate \eqref{fact:C0-blow-up} leaves two possibilities for any sequence of solution $v_{\lambda_n}$ with $\lambda_n\to 0$:
\begin{enumerate}
	\item[(I)] $\liminf_{n\to +\infty} \inf_{\mathbf{P}^1} v_{\lambda_n}=-\infty$; 
	\item[(I\!I)] $\limsup_{n\to +\infty} \sup_{\mathbf{P}^1} v_{\lambda_n}=+\infty$.
\end{enumerate}

We will show in this article that the two possibilities both can appear in the case $\bm\phi$ is stable and $\lambda\to 0$. First of all, we show in Proposition \ref{prop:alternative} that one of the \emph{two alternatives} occurs about the volume of the induced Riemannian metric as the temper approaches zero, in case $\bm\phi$ is stable.   

\begin{proposition}
	\label{prop:volume-diverge}
	Suppose $\bm\phi$ is stable. Let  $v_{\lambda_i}$ be a solution to Equation $\eqref{eqn:HS-lambda-v}_{\lambda_i}$ with the sequence $\lambda_i\to 0$ and $g_{\lambda_i}= \frac{1}{\lambda_i}e^{2\alpha \tau v_{\lambda_i} - 2\alpha |\bm\phi|_{h_0}^2 e^{v_{\lambda_i}}}g_0$ be the metric determined by $v_{\lambda_i}$. Then either
	\[
	\lim_{i\to \infty} \Vol_{g_{\lambda_i}}=+\infty,
	\] 
	or 
	\[
	\liminf_{i\to +\infty} \Vol_{ g_{\lambda_i}} = \frac{4\pi N}{\tau}.
	\]
\end{proposition}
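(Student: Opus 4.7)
Proof plan: Argue by contradiction. Suppose neither alternative holds, so after passing to a subsequence (still indexed by $i$) we have $\Vol_{g_{\lambda_i}}\to V_\infty\in(\underline V,+\infty)$ while $\lambda_i\to 0$. I will deduce a contradiction from two integral identities combined with a maximum principle argument.

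The two integral inputs are as follows. First, by the definition of temper, $\lambda_i\Vol_{g_{\lambda_i}}=\int_\Sigma e^{w_{\lambda_i}}\omega_0$ where $w_{\lambda_i}:=2\alpha\tau v_{\lambda_i}-2\alpha|\bm\phi|_{h_0}^2 e^{v_{\lambda_i}}$, so the left hand side tends to $0$ and hence $e^{w_{\lambda_i}}\to 0$ in $L^1(\omega_0)$. Second, integrating the first equation of \eqref{eqn:GV0} over $\Sigma$ gives the identity $\int_\Sigma|\bm\phi|_h^2\,\omega_{g_{\lambda_i}}=\tau\Vol_{g_{\lambda_i}}-4\pi N$, whose limit $\tau V_\infty-4\pi N$ is strictly positive. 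The stability hypothesis $n_j<N/2$ (equivalently $2\alpha\tau n_j<1$) makes $|\bm\phi|_{h_0}^{-4\alpha\tau}$ an $\omega_0$-integrable function, which serves as a uniform majorant for $e^{w_{\lambda_i}}$ (since the map $v\mapsto 2\alpha\tau v-2\alpha|\bm\phi|_{h_0}^2 e^v$ is maximized at $e^v=\tau/|\bm\phi|_{h_0}^2$). Passing to a further subsequence, $e^{w_{\lambda_i}}\to 0$ pointwise a.e., so $v_{\lambda_i}\to\pm\infty$ a.e.\ on $\Sigma$.

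The principal analytical tool is the maximum principle at a maximum point $\tilde z_i$ of $v_{\lambda_i}$: since $\Delta_{\omega_0}v_{\lambda_i}(\tilde z_i)\le 0$, equation \eqref{eqn:HS-lambda-v} gives
\[
\lambda_i^{-1}e^{w_{\lambda_i}(\tilde z_i)}\bigl(\tau-|\bm\phi|_h^2(\tilde z_i)\bigr)\le 2N.
\]
Because $\int|\bm\phi|_h^2\,\omega_{g_{\lambda_i}}$ stays bounded away from $0$, the state function $|\bm\phi|_h^2$ cannot decay to $0$ uniformly, so $\sup v_{\lambda_i}\ge -M$ for some fixed $M>0$ along the subsequence.

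I then divide into two subcases exhausting the possibilities granted by $\|v_{\lambda_i}\|_{C^0}\to+\infty$. If $\sup v_{\lambda_i}$ stays bounded above, then $w_{\lambda_i}(\tilde z_i)$ remains bounded, forcing $\lambda_i^{-1}e^{w_{\lambda_i}(\tilde z_i)}\to+\infty$ and hence $|\bm\phi|_h^2(\tilde z_i)\to\tau$; combined with $v_{\lambda_i}\le\sup v_{\lambda_i}$ and the a.e.\ divergence $v_{\lambda_i}\to -\infty$ off the maximum locus, the pointwise identity $|\bm\phi|_h^2\,\omega_{g_{\lambda_i}}=\lambda_i^{-1}|\bm\phi|_{h_0}^{-4\alpha\tau}(|\bm\phi|_h^2)^{1+2\alpha\tau}e^{-2\alpha|\bm\phi|_h^2}\,\omega_0$, together with careful accounting of how $\lambda_i$ balances with $\sup v_{\lambda_i}$, should deliver $\int|\bm\phi|_h^2\,\omega_{g_{\lambda_i}}\to 0$, contradicting $\tau V_\infty-4\pi N>0$. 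The main obstacle is the remaining subcase $\sup v_{\lambda_i}\to+\infty$, where a blow-up analysis near the concentration points is required; here the stability condition $n_j<N/2$ is essential to guarantee that the rescaled profile is admissible, and one must argue that such concentration actually forces $\Vol_{g_{\lambda_i}}\to+\infty$, contradicting $V_\infty<+\infty$.
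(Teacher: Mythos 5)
Your two integral identities are correct and are genuinely relevant ($\lambda_i\Vol_{g_{\lambda_i}}=\int_\Sigma e^{w_{\lambda_i}}\omega_0\to 0$ when the volume stays bounded, and $\int_\Sigma\Phi_i\,\mathrm{dvol}_{g_{\lambda_i}}=\tau\Vol_{g_{\lambda_i}}-4\pi N$), but the argument has genuine gaps exactly where the difficulty lies. First, your maximum principle inequality has the wrong sign for this paper's conventions: here $\Delta_{\omega}\eta=\tr_\omega\left(-2i\partial\bar\partial\eta\right)$, so at an interior maximum of $v_{\lambda_i}$ one has $\Delta_{\omega_0}v_{\lambda_i}(\tilde z_i)\ge 0$, and \eqref{eqn:HS-lambda-v} then yields $\lambda_i^{-1}e^{w_{\lambda_i}(\tilde z_i)}\bigl(\tau-\Phi_i(\tilde z_i)\bigr)\ge 2N$ --- a lower bound, not the upper bound you need to conclude $\Phi_i(\tilde z_i)\to\tau$. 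Second, and more seriously, neither subcase is closed. In the bounded-$\sup$ subcase, dominated convergence does give $\int_\Sigma|\bm\phi|_{h_0}^{-4\alpha\tau}\Phi_i^{1+2\alpha\tau}e^{-2\alpha\Phi_i}\omega_0\to 0$, but to contradict $\int_\Sigma\Phi_i\,\mathrm{dvol}_{g_{\lambda_i}}\to\tau V_\infty-4\pi N>0$ you need this integral to be $o(\lambda_i)$, and the ``careful accounting of how $\lambda_i$ balances with $\sup v_{\lambda_i}$'' is precisely the missing argument. The subcase $\sup v_{\lambda_i}\to+\infty$ you explicitly defer to an unspecified blow-up analysis. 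As it stands this is a plan, not a proof.

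The paper takes a completely different, geometric route that bypasses this pointwise bookkeeping. Assuming the volume is bounded above and (negating the second alternative) bounded away from $\underline V$, the conformally rescaled metrics $k_i=e^{2\alpha\Phi_i}g_{\lambda_i}$ have uniformly bounded curvature, covariant derivative of curvature, and diameter by the estimates of \cite{FPY}, so Cheeger--Gromov compactness applies after pulling back by suitable $\sigma_i\in SL(2,\mathbf{C})$. Stability of $D=[\bm\phi=0]$ in the GIT sense (together with the vanishing of the generalized Futaki invariant) forces the limit divisor to lie in the $PSL(2,\mathbf{C})$-orbit of $D$ and then forces $\sigma_i$ to converge, so that $k_i$ itself converges in $C^{2,\beta}$ without any gauge. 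Writing $\Delta_{k_i}v_{\lambda_i}=(\tau-\Phi_i)e^{-2\alpha\Phi_i}-2N\tr_{\omega_{k_i}}\omega_0$, whose right-hand side is uniformly bounded in $L^\infty$, a $W^{2,p}$ estimate against the converging metrics gives a uniform $C^0$ bound on $v_{\lambda_i}$, contradicting \eqref{fact:C0-blow-up}. Note that stability enters there through the orbit structure, not merely through the integrability of $|\bm\phi|_{h_0}^{-4\alpha\tau}$; to salvage your approach you would essentially have to reproduce this compactness, since the integral identities alone do not exclude the intermediate volume behavior.
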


\begin{proof}
	We prove this result by contradiction. Assume the volume of $ g_{\lambda_i}$ is uniformly bounded from above. Write $\Phi_{\lambda_i}=|\bm\phi|_{h_{\lambda_i}}^2=e^{u_0+v_{\lambda_i}}$. Define $k_i = e^{2\alpha \Phi_{\lambda_i}} g_{\lambda_i}$ as in \cite{FPY}, then the Lemma 4.9 of \cite{FPY} implies that this sequence of Riemannian metrics on $\mathbf{P}^1$ have uniformly bounded curvature (between $0$ and some uniform constant $K>0$) and covariant derivative. We use the same argument as in \cite[Sect. 5.4]{FPY} to prove that the diameter is uniformly bounded from above.

	Suppose the second alternative in the proposition is not true, then we have a definite distance above the admissible lower bound $\frac{4\pi N}{\tau}$ of the volume. From now on, we are in the same setting as \cite{FPY} for which we could use Cheeger-Gromov's compactness theorem. Thus, we conclude there exists a family $\sigma_i\in SL(2,\mathbb{C})$ and a subsequence of $k_{l_i}$ (still denoted by $k_i$) such that
	\begin{equation}
	\sigma_i^* k_i := k_i' \longrightarrow k_\infty' \text{ in }C^{2,\beta} \text{ sense as } i\to +\infty
	\end{equation}
	for a $C^{2,\beta}$ K\"ahler metric $k'_\infty$ on $\mathbf{P}^1$.  In the meantime, $\sigma_i^{-1}[\bm\phi=0]\rightarrow D_\infty'$. Suppose $D_\infty'$ is not in the $PSL(2,\mathbf{C})$ orbit of $[\bm\phi=0]=D$, then it must consists of either one or two points. Further, since such divisor supports an Einstein-Bogomol'nyi metric it must consist of two points with equal multiplicity by the vanishing of the generalized Futaki's invariant \cite{Al-Ga-Ga-P-Y}. If $D$ is stable, then we get a contradiction since the orbit closure of a stable divisor does not contain a strictly polystable divisor. If $D$ is strictly polystable, then $D_\infty'$ lies in the orbit already since there is only one strictly polystable orbit for this action. In any case, we conclude that $D_\infty'=\sigma(D)$ for some $\sigma\in PSL(2,\mathbf{C})$. 
	
	Now, under the assumption of stability of $D$, the condition $\left(\sigma_i\circ \sigma\right)^{-1}(D)\to D$ means $\sigma_i\circ\sigma$ converges to some element in $Stab(D)\subset SL(2,\mathbf{C})$ which consist of $id, -id$. Since both element acts trivially on $\mathbf{P}^1$, without loss of generality we could assume $\gamma_i:=\sigma_i\circ \sigma\to id$. A consequence of this is that 
	\[
	k_i = \left(\sigma\circ \gamma_i^{-1}\right)^*k_i'\longrightarrow \sigma^*k_\infty' \text{ in }C^{2,\beta} \text{ sense as }i\to +\infty.
	\]

	Take the difference of the two equations
	\[
		\Delta_{k_i} \log \Phi_i = \left(\tau-\Phi_i\right) e^{-2\alpha \Phi_i}  - 4\pi \sum_{j=1}^d n_j \delta_{p_j}
	\]
	and 
	\[
	\Delta_{k_i} \log |\bm\phi|_{h_0}^2 
	= 
	2N \tr_{\omega_{k_i}}\omega_0 - 4\pi \sum_{j=1}^d n_j \delta_{p_j}
	\]
	we get for $v_{\lambda_i}=\log \Phi_i - \log |\bm\phi|_{h_0}^2$, 
	\[
	\Delta_{k_i} v_{\lambda_i} = \left(\tau-\Phi_i\right) e^{-2\alpha \Phi_i}  
	- 
	2N \tr_{\omega_{k_i}}\omega_0. 
	\]
	Since the RHS is uniformly bounded in $L^\infty$ norm, and the coefficient of the Laplacian operator converges in $C^{2,\beta}$ sense, $W^{2,p}$ estimate implies that there exists a uniform constant $C$ such that 
	\begin{equation}
	|\!| v_{\lambda_i}|\!|_{C^0}\leq C. 
	\end{equation}
	This contradicts the previous result \eqref{fact:C0-blow-up}. 	
\end{proof}

The following corollary shows that in case the volume of a sequence of Einstein-Bogomol'nyi metrics goes to $+\infty$, then the $\sup_{\mathbf{P}^1} v_{\lambda_i}$ must diverges to $+\infty$. Actually, 
\begin{proposition}
	\label{cor:factor-converge}
	Let $\bm\phi$ be polystable (either stable or strictly polystable), and $v_{\lambda_i}$ is a family of solutions to Equation $\eqref{eqn:HS-lambda}_{\lambda_i}$ with $\Vol_{g_{\lambda_i}}\to +\infty$. Then there holds 
	\[
	\lim_{i\to \infty} 
	\sup_{\mathbf{P}^1} \left( v_{\lambda_i}+u_0 -\log \tau \right)=0.
	\] 
\end{proposition}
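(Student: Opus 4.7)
The plan is to reduce the conclusion to the pointwise statement $\sup_{\mathbf{P}^1} |\bm\phi|_{h_{\lambda_i}}^2 \to \tau$. Since $|\bm\phi|_{h_\lambda}^2 = e^{u_0+v_\lambda}$ and the logarithm is continuous, this is exactly $\sup(v_{\lambda_i} + u_0 - \log\tau) \to 0$. I would derive the sandwich $\sup \Phi_{\lambda_i} \to \tau$ from two independent consequences of the first equation of \eqref{eqn:GV0} applied to each member of the family. Polystability plays no role in the argument itself; it only guarantees that such Einstein-Bogomol'nyi metrics exist and enter the statement.

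The first step is the universal a priori upper bound $\Phi_\lambda := |\bm\phi|_{h_\lambda}^2 \leq \tau$ on $\mathbf{P}^1$. Away from the zero divisor of $\bm\phi$, one has the local identity $i\partial\bar\partial \log\Phi_\lambda = -iF_{h_\lambda}$, and substituting the first equation of \eqref{eqn:GV0} gives
\begin{equation*}
i\partial\bar\partial \log\Phi_\lambda \;=\; \tfrac{1}{2}(\Phi_\lambda - \tau)\,\omega_\lambda,
\qquad \text{hence}\qquad \Delta_{\omega_\lambda} \log\Phi_\lambda \;=\; \tau - \Phi_\lambda.
\end{equation*}
Since $\log\Phi_\lambda \to -\infty$ at the zeros of $\bm\phi$, its maximum is attained in the smooth locus of $\bm\phi$, where the maximum principle (with the paper's sign convention that $\Delta_\omega \eta \geq 0$ at an interior max) forces $\tau - \Phi_\lambda \geq 0$ there, so $\sup_{\mathbf{P}^1} \Phi_\lambda \leq \tau$.

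The second step is the integrated form of the same equation. Using $\int_{\mathbf{P}^1} iF_{h_\lambda} = 2\pi N$, the first equation of \eqref{eqn:GV0} rearranges to
\begin{equation*}
\int_{\mathbf{P}^1} \Phi_\lambda\, \omega_\lambda \;=\; \tau\,\Vol_{\omega_\lambda} - 4\pi N.
\end{equation*}
Combined with the trivial bound $\int \Phi_\lambda\,\omega_\lambda \leq \sup_{\mathbf{P}^1}\Phi_\lambda \cdot \Vol_{\omega_\lambda}$ and the previous step, this yields
\begin{equation*}
\tau - \frac{4\pi N}{\Vol_{\omega_{\lambda_i}}} \;\leq\; \sup_{\mathbf{P}^1} \Phi_{\lambda_i} \;\leq\; \tau.
\end{equation*}
Letting $\Vol_{\omega_{\lambda_i}} \to +\infty$ forces $\sup \Phi_{\lambda_i} \to \tau$. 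Taking logarithms and using $\sup \log \Phi_{\lambda_i} = \log \sup \Phi_{\lambda_i}$ converts this to the claimed $\sup(v_{\lambda_i} + u_0 - \log\tau) \to 0$.

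I do not anticipate a genuine obstacle; the argument is an application of the maximum principle and a single integration by parts in disguise. The only subtlety worth flagging is the justification that the maximum of $\log \Phi_\lambda$ is an interior maximum of a smooth function, which is immediate from the logarithmic singularity at the zero divisor. Everything else is a direct consequence of the first equation of \eqref{eqn:GV0}.
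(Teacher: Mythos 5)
Your proof is correct, and it takes a genuinely different route from the paper's. The paper argues by contradiction using the \emph{second} equation of the system: from the scalar curvature identity $S_{g_\lambda}= 2\alpha |\nabla \Phi_\lambda|^2/\Phi_\lambda + \alpha(\tau-\Phi_\lambda)^2$ and Gauss--Bonnet, a uniform gap $v_{\lambda_i}+u_0-\log\tau\leq -\kappa$ would force $4\pi \geq \alpha\tau^2(1-e^{-\kappa})^2\Vol_{g_{\lambda_i}}$, contradicting $\Vol_{g_{\lambda_i}}\to+\infty$. You instead use only the \emph{first} (Bogomol'nyi) equation twice: once pointwise via the maximum principle to get $\Phi_\lambda\leq\tau$ (a fact the paper also invokes, without proof, to set up its contradiction), and once integrated to get $\int_{\mathbf{P}^1}\Phi_\lambda\,\omega_\lambda=\tau\Vol_{\omega_\lambda}-4\pi N$, which squeezes $\sup\Phi_{\lambda_i}$ between $\tau-4\pi N/\Vol_{\omega_{\lambda_i}}$ and $\tau$. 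Your version is more elementary (no curvature identity, no Gauss--Bonnet, no argument by contradiction) and has the added benefit of an explicit rate, $0\geq \sup_{\mathbf{P}^1}(v_{\lambda_i}+u_0-\log\tau)\geq \log\bigl(1-\tfrac{4\pi N}{\tau\Vol_{\omega_{\lambda_i}}}\bigr)$, whereas the paper's method only yields the qualitative limit. Both arguments correctly make no essential use of polystability. All the individual steps check out against the paper's conventions: with $\Delta_\omega\eta=\tr_\omega(-2i\partial\bar\partial\eta)$ one indeed gets $\Delta_{\omega_\lambda}\log\Phi_\lambda=\tau-\Phi_\lambda$ away from the zero divisor and $\Delta_\omega\eta\geq 0$ at an interior maximum, and the identity $\int_{\mathbf{P}^1}\Phi_i\,\omega_i=\tau\Vol_{\omega_i}-4\pi N$ is exactly the one the paper itself records at the start of its Section 3.
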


\begin{proof}
	Firstly, we claim 
	\[
	\limsup_{i\to \infty} 
	\sup_{\Sigma} \left( v_{\lambda_i}+u_0 -\log \tau \right)=0. 
	\]
	Suppose the contrary, then according to the fact that $v_\lambda+u_0-\log \tau \leq 0$ for any possible solution $v_\lambda$ to \eqref{eqn:HS-lambda-v}, there exists $\kappa>0$ such that for all $i$
	\[
	v_{\lambda_i} + u_0 - \log \tau \leq -\kappa. 
	\]

	It follows from the scalar curvature formula $S_{g_\lambda}= 2\alpha \frac{|\nabla \Phi_{\lambda}|^2}{\Phi_{\lambda} } + \alpha (\tau - \Phi_{\lambda})^2$ and Gauss-Bonnet Formula that 	
	\[
	4\pi 
	=
	\int_{\mathbf{P}^1} S_{ g_{\lambda_i}} \text{dvol}_{\widehat g_{\lambda_i}}
	\geq
	\alpha 
	\int_{\mathbf{P}^1} (\tau - e^{v_{\lambda_i}+u_0})^2 \text{dvol}_{\widehat g_{\lambda_i}}
	\geq 
	\alpha \tau^2 (1-e^{-\kappa})^2 \Vol_{g_{\lambda_i}},
	\]
	contradicting the assumption $\Vol_{g_{\lambda_i}}\to +\infty$. 
\end{proof}

\begin{remark}
	\label{remark:volume-large}
	We should notice that $\inf_{\Sigma} \left( v_{\lambda_i} + u_0 - \log\tau \right)=-\infty$ always. This corollary does not assume $\bm\phi$ to be stable. However, the Proposition \ref{prop:alternative} shows that in case $\bm\phi$ is stable then the condition ``volume converges to $+\infty$'' implies $\lambda_i\to 0$. In contrast, later in the detailed study of the symmetric solution we will see that it might happen that $\Vol_{g_{\lambda_i}}\to +\infty$ while $\lambda_i$ does not converge to $0$ when $\bm\phi$ is strictly polystable. 
\end{remark}

An immediate corollary of Proposition \ref{prop:volume-diverge} is that

\begin{corollary}
	\label{cor:temper-lower-bound}
	Let $\bm\phi$ be stable. For any $\underline V<V_1<V_2<+\infty$ there exists $\lambda_0=\lambda_0\left(\bm\phi, V_1, V_2\right)>0$ such that  for any $\left( \omega, h, \bm\phi\right)\in \mathfrak{M}_{EB}\left(L, \tau; \bm\phi\right)$, we have  $\lambda\left(\omega, h,\bm\phi\right)\geq \lambda_0$.  
\end{corollary}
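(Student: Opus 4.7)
The plan is to derive this corollary as an almost immediate contrapositive of Proposition \ref{prop:volume-diverge}. The implicit qualification on the ``for any $(\omega,h,\bm\phi)$'' is that $V_1 \leq \Vol_{\omega} \leq V_2$, forced by the dependence of $\lambda_0$ on $V_1, V_2$; without such a restriction the lower bound fails by Proposition \ref{prop:alternative} whenever the volume approaches either end of the admissible interval $(\underline V, +\infty)$.

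First I would argue by contradiction: suppose no such $\lambda_0 > 0$ exists. Then one can extract a sequence $\left(\omega_n, h_n, \bm\phi\right) \in \mathfrak{M}_{EB}\left(L, \tau; \bm\phi\right)$ with $V_1 \leq \Vol_{\omega_n} \leq V_2$ and tempers $\lambda_n := \lambda\left(\omega_n, h_n, \bm\phi\right) \to 0$. Each such triple corresponds to a solution $v_{\lambda_n}$ of equation \eqref{eqn:HS-lambda-v} with parameter $\lambda_n$, and the induced Riemannian metric $g_{\lambda_n}=\frac{1}{\lambda_n}e^{2\alpha\tau v_{\lambda_n}-2\alpha|\bm\phi|_{h_0}^2 e^{v_{\lambda_n}}}g_0$ has volume equal to $\Vol_{\omega_n}$.

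Next I would apply Proposition \ref{prop:volume-diverge} to the sequence $\{v_{\lambda_n}\}$: since $\bm\phi$ is stable and $\lambda_n \to 0$, the proposition leaves only two possibilities, namely either $\Vol_{g_{\lambda_n}} \to +\infty$ along some subsequence, or $\liminf_{n \to +\infty} \Vol_{g_{\lambda_n}} = \underline V = \frac{4\pi N}{\tau}$. However, the uniform constraints $V_1 \leq \Vol_{\omega_n} \leq V_2$ exclude both: the upper bound $V_2 < +\infty$ rules out divergence, and the lower bound $V_1 > \underline V$ rules out accumulation at the admissible lower boundary. This contradiction produces the desired uniform positive lower bound $\lambda_0 = \lambda_0(\bm\phi, V_1, V_2) > 0$.

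There is no substantive analytic obstacle at this step, since all of the difficult work, namely the curvature and diameter estimates on the conformally rescaled metrics $k_i = e^{2\alpha \Phi_{\lambda_i}} g_{\lambda_i}$, Cheeger-Gromov compactness modulo $SL(2,\mathbb{C})$, and the $W^{2,p}$-based $L^\infty$ bound on $v_{\lambda_i}$ derived from comparing the equations for $\log \Phi_{\lambda_i}$ and $\log|\bm\phi|_{h_0}^2$, has already been carried out in the proof of Proposition \ref{prop:volume-diverge}. The corollary is really just a packaging of that proposition into a quantitative form directly usable for compactness arguments on the fixed-Higgs-field moduli spaces $\mathfrak{M}_{EB}(L,\tau;\bm\phi,V)$ as $V$ ranges over compact subintervals of $(\underline V,+\infty)$.
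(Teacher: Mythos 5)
Your proof is correct and is exactly the argument the paper intends: the corollary is stated there as an immediate consequence of Proposition \ref{prop:volume-diverge} with no separate proof, and your contradiction argument (a sequence with $\lambda_n\to 0$ and volumes trapped in $[V_1,V_2]$ would violate the dichotomy of that proposition) is the standard contrapositive packaging. You are also right to flag the implicit volume restriction $V_1\leq \Vol_\omega\leq V_2$, which is how the corollary is actually invoked later in the proof of Theorem \ref{thm:isolated}.
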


Building on the previous results, we can prove the following theorem.

\begin{theorem}
	\label{thm:isolated}
	Let $\bm\phi$ be stable, and $V\in \left(\underline V, +\infty\right)$ be fixed, then 
	$\sharp \big(  \mathfrak{M}_{EB}\left(L, \tau; \bm\phi, V\right)\big)<+\infty$. Moreover, this number is independent of $V$. 
\end{theorem}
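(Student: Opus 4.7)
The plan is to view the total space $\mathcal{N} := \bigsqcup_{V \in (\underline V, +\infty)} \mathfrak{M}_{EB}(L, \tau; \bm\phi, V) \times \{V\}$ as a covering of the admissible interval $(\underline V, +\infty)$ via the volume projection $\pi : (\omega, h, V) \mapsto V$. Showing that $\pi$ is a proper local homeomorphism will immediately yield both assertions: finiteness of each fiber from properness plus discreteness, and invariance of the sheet number from connectedness of $(\underline V, +\infty)$.

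For \emph{properness} of $\pi$ I would fix a compact subinterval $[V_1, V_2] \subset (\underline V, +\infty)$. Corollary \ref{cor:temper-lower-bound} gives a lower bound $\lambda \geq \lambda_0(\bm\phi, V_1, V_2) > 0$ on the temper of every EB metric with volume in $[V_1, V_2]$, while the threshold $\lambda \leq \lambda_c$ from \cite{HS, Sohn} provides a uniform upper bound. Feeding these two-sided bounds into the a priori $C^k$-estimates used in the proof of Proposition \ref{prop:alternative} (based on \cite[Lemma 2.6]{HS} and elliptic bootstrap) yields uniform smooth bounds on the potentials $v$. Any sequence in $\pi^{-1}([V_1, V_2])$ therefore $C^\infty$-subconverges to an EB metric whose volume still lies in $[V_1, V_2]$, which is the required properness.

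For the \emph{local homeomorphism} property I would apply an implicit function theorem at each $(\omega_0, h_0) \in \mathfrak{M}_{EB}(L, \tau; \bm\phi, V_0)$ to produce a smooth curve $V \mapsto (\omega_V, h_V)$ of EB metrics with $\Vol_{\omega_V} = V$ through $(\omega_0, h_0)$. This is precisely the openness step in the continuity method of \cite{FPY}: the linearization of \eqref{eqn:GV0} augmented with the volume constraint is an isomorphism of suitable H\"older spaces, because the Jacobi operator of the coupled Einstein-Maxwell-Higgs system is invertible transverse to the one-dimensional volume direction when $\bm\phi$ is stable. Granted this, $\pi$ is a local homeomorphism, and combined with properness it becomes a finite covering map over the connected base $(\underline V, +\infty)$, whose fibers are therefore finite and of constant cardinality.

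The main obstacle is the linearized non-degeneracy underlying the local homeomorphism. Although it is morally implicit in the FPY continuity argument and fits the moment-map picture of \cite{Al-Ga-Ga-P, FPY}, the version needed here has to be a genuine statement at \emph{every} EB metric, not merely along the specific continuity path used in \cite{FPY}. Verifying this amounts to ruling out non-trivial Jacobi fields of \eqref{eqn:GV0} that preserve $\Vol_\omega$, using the stability of $\bm\phi$ in the form of the generalized Futaki invariant of \cite{Al-Ga-Ga-P-Y} to preclude infinitesimal automorphisms. This spectral step is the delicate part; once it is in place, the covering-space reasoning runs smoothly.
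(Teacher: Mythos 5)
Your proposal is correct and follows essentially the same route as the paper: compactness of the fixed-volume moduli space via the temper lower bound of Corollary \ref{cor:temper-lower-bound} together with the a priori estimates, combined with the local uniqueness coming from the Implicit Function Theorem setup of \cite[Lemma 3.1]{FPY}, and then connectedness of $\left(\underline V,+\infty\right)$ to get constancy of the count. Your packaging of the last step as a proper local homeomorphism (finite covering of the volume interval) is just a cleaner phrasing of the paper's argument via non-intersecting continuity curves, and the linearized non-degeneracy you flag as the delicate point is exactly the ingredient the paper also takes from \cite[Lemma 3.1]{FPY} applied at the limit metric.
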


\begin{proof}
	For any sequence $\left(\omega_n, h_n, \bm\phi\right)\in \mathfrak{M}_{EB}\left(L, \tau; \bm\phi, V\right)$, the K\"ahler forms $\omega_n$ live inside the fixed de Rham cohomology class $\frac{V}{2\pi}[\omega_0]$. Let $\lambda_n=\lambda\left( \omega_n, h_n, \bm\phi\right)$, and $v_n=\log h_n-\log h_0$ be the corresponding Hermitian potential of $h_n$ (relative to the fixed background Hermitian metric $h_0$).

The function $v_n$ satisfies \eqref{eqn:HS-lambda-v} with parameter $\lambda_n$, i.e. 
	\begin{equation}
	\Delta_{\omega_0} v_n 
	= 
\frac{1}{\lambda_n} \left( \tau - \Phi_0 e^{v_n} \right) e^{2\alpha\tau v_n - \Phi_0 e^{v_n}} - 2N.
	\end{equation}
Since $\Vol_{\omega_n}$ is a constant $V$, the temper $\lambda_n$ for this family of metrics must be uniformly bounded away from $0$ by Corollary \ref{cor:temper-lower-bound}, therefore we have $|\!|v_n |\!|_{C^{k,\gamma}}$ is uniformly bounded for any $k$. This in particular implies that $\lambda_n$ converges to $\lambda_\infty>0$ and $v_n$ converges in $C^\infty$ sense to some $v_\infty$ up to a subsequence. The function $v_\infty$ gives rise to $\left(\omega_\infty, h_\infty, \bm\phi\right)\in \mathfrak{M}_{EB}\left(L,\tau;\bm\phi\right)$ and $\left( \omega_n, h_n\right)$ converges to some  $\left(\omega_\infty, h_\infty\right)$ in $C^\infty$ sense.  
	
The set up of the Implicit Function Theorem argument in \cite[Lemma 3.1]{FPY} implies that for $\left( \omega_\infty, h_\infty\right)$ there exists a sufficiently small neighborhood in suitable Banach space such that any Einstein-Bogomol'nyi metric $(\omega, h)$ in this neighborhood must be uniquely determined by its volume parameter, in other word, there is ``local uniqueness'' result. Therefore, for $n$ sufficiently large, $(\omega_n, h_n)=(\omega_\infty, h_\infty)$. This establishes that the moduli space contains finitely many elements. 	

Fix any volume parameter $V_0\in \left(\underline V, +\infty\right)$, using the continuity path in \cite[Lemma 3.1]{FPY}, each solution $(\omega, h, \bm\phi)\in \mathfrak{M}_{EB}\left( L, \tau;V_0, \bm\phi\right)$ can be generate a smooth family $\Big\{\left(\omega(V), h(V), \bm\phi\right)\Big\}_{V\in \left(\underline V, +\infty\right)}\subset \mathfrak{M}_{EB}\left(L, \tau;\bm\phi\right)$ such that $\Vol_{\omega(V)}=V$ and $\left(\omega(V_0), h(V_0), \bm\phi\right)=\left(\omega, h, \bm\phi\right)$. The smooth curves generated by different elements in $\mathfrak{M}_{EB}\left(L, \tau;V_0, \bm\phi\right)$ do not intersect in $\mathfrak{M}_{EB}\left(L, \tau;\bm\phi\right)$ by the above ``local uniqueness", therefore, $\sharp \big(  \mathfrak{M}_{EB}\left(L, \tau; \bm\phi, V\right)\big)$ is independent of $V$. 
\end{proof}

\begin{remark}
	\label{remark:uniquenss}
According to the general moment map picture of Einstein-Bogomol'nyi metrics with fixed Higgs field and fixed volume \cite{Al-Ga-Ga-P}, there should be ``uniqueness" theorem (modulo holomorphic automorphism of $\mathbf{P}^1$ preserving the Higgs field). We conjecture that for any $V\in \left(\underline V, +\infty\right)$, $\sharp \big(  \mathfrak{M}_{EB}\left(L, \tau; \bm\phi, V\right)\big)=1$ 
for stable $\bm\phi$, and $ \mathfrak{M}_{EB}\left(L, \tau; \bm\phi, V\right)\simeq \mathbf{C}^*$ for strictly polystable $\bm\phi$. 
\end{remark}

\begin{remark}
	Taking Proposition \ref{prop:alternative}, Theorem \ref{thm:isolated} and Remark \ref{remark:uniquenss} into consideration, we have a conjectured dependence of $\lambda$ and $V$ of elements in $\mathfrak{M}_{EB}\left(L, \tau; \bm\phi\right)$ for any stable $\bm\phi$, which is illustrated in the following graph:
	
	\begin{figure}[htb]
		\begin{tikzpicture}[samples=200,domain=0:4]
		\draw[->] (-1.2,0) -- (5,0) node[right] {$volume\;\; V$};
		\draw[dashed] (0.55,0) -- (0.55,2.5) node[right] {};
		\draw[dashed] (-1,2.2) -- (1,2.2) node[right] {};
		\draw[] (-1, 1)  node[left] {} -- (3,1);
		\draw[fill=orange,color=black] (0.55,2.2) circle (1pt);
		\draw[fill=orange,color=black] (-1.1,2.2)  node[left]{critical $\lambda_c$};
		\draw[fill=orange,color=black] (0.55,0)  node[below]{$V_c$};
		\draw[fill=orange,color=blue] (0,0) circle (1pt) node[below]{$\underline V$};
		\draw[fill=orange,color=black] (-1.2,0)  node[below]{$o$};
		\draw[->] (-1,-0.2) -- (-1,3) node[above] {$temper\;\;\lambda$};
		\draw[color=red,<-, domain=0:4.8]  plot (\x,  6*\x/(3*\x^3+1) node[above] {$\lambda$};
		\end{tikzpicture}
		\caption{Conjectured relation of $V$ and $\lambda$ for stable $\bm\phi$}
	\end{figure}
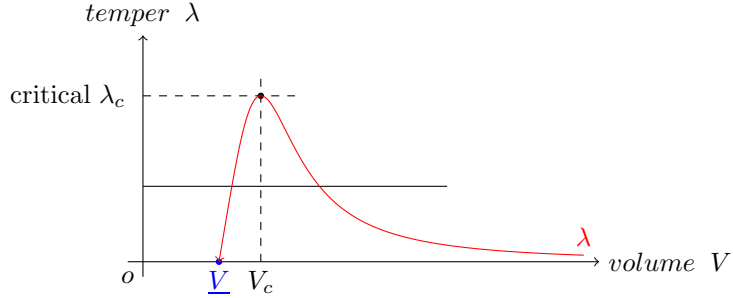

	More precisely, we conjecture that there is a critical $V_c\in \left(\underline V, +\infty\right)$ such that $\lambda$ is strictly increasing on $(\underline V, V_c]$ from $0$ to $\lambda_c$,  and strictly decreasing on $[V_c, +\infty)$ from $\lambda_c$ to $0$. The \emph{maximal solution} of \cite{HS} and the \emph{Leray-Schauder solution} corresponds to volume parameters on the right and left to $V_c$ respectively. 
\end{remark}

\begin{theorem}[Injectivity radius lower bound]
	\label{thm:injectivity}
	Let $\bm\phi$ be polystable (either stable or strictly polystable). For any $\left(\omega, h, \bm\phi\right)\in \mathfrak{M}_{EB}\left(L, \tau\right)$, there holds 
	\[
	inj(\mathbf{P}^1, \omega)
	\geq 
	\frac{\pi}{\sqrt{\frac{\left(3+2\alpha\tau\right)\tau}{2}}}, 
	\]
	and 
	\[
	inj\left( \mathbf{P}^1, e^{2\alpha \Phi}\omega\right)
	\geq 
	\frac{\pi}{\sqrt{\alpha\tau^2}}.
	\]
\end{theorem}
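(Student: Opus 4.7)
Both bounds come from Klingenberg's theorem: since $\mathbf{P}^1$ is simply connected, any pointwise upper bound $K\leq K_{\max}$ on Gaussian curvature yields $\mathrm{inj}\geq \pi/\sqrt{K_{\max}}$. So the task reduces to obtaining an a priori upper curvature bound for each of the two metrics. Throughout, $\Phi:=|\bm\phi|^2_h$, and the first EB equation together with the maximum principle applied to $\log\Phi$ gives $0\leq \Phi\leq \tau$ on $\mathbf{P}^1$.

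\emph{The rescaled metric $\tilde g=e^{2\alpha\Phi}\omega$.} Using the Chern--Ricci transformation $\mathrm{Ric}\,\tilde\omega = \mathrm{Ric}\,\omega - 2\alpha\,i\partial\bar\partial\Phi$, the identity $\Delta_\omega\log\Phi=\tau-\Phi$ (from the first EB equation, and equivalent to $\Delta_\omega\Phi=\Phi(\tau-\Phi)-|\nabla\Phi|_\omega^2/\Phi$), and the second EB equation, a direct computation simplifies to the clean expression
\[
S_{\tilde g}=\alpha\,e^{-2\alpha\Phi}(\tau^2-\Phi^2).
\]
Since $\Phi\in[0,\tau]$ and $e^{-2\alpha\Phi}\leq 1$, we have $K_{\tilde g}=\tfrac12 S_{\tilde g}\leq \tfrac12\alpha\tau^2\leq \alpha\tau^2$, and Klingenberg yields the second bound.

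\emph{The metric $\omega$.} Combining the expression for $\Delta_\omega\Phi$ above with the second EB equation gives the scalar curvature formula
\[
S_\omega=\alpha(\tau-\Phi)^2+2\alpha\,|\nabla\Phi|_\omega^2/\Phi,
\]
so writing $Q:=|\nabla\Phi|_\omega^2/\Phi$ one has $K_\omega\leq \tfrac12\alpha\tau^2+\alpha Q$. It therefore suffices to prove the pointwise gradient estimate $Q\leq \tfrac{\tau^2}{2}+\tfrac{3\tau}{2\alpha}$. A key observation is that $Q$ extends to a smooth function on all of $\mathbf{P}^1$: near a zero of $\bm\phi$ of multiplicity $n\geq 2$ it vanishes, and near a simple zero it is bounded. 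Hence $Q$ attains its maximum at an interior point, and the maximum principle is available. Computing $\Delta_\omega Q$ via Bochner's formula for $|\nabla\Phi|_\omega^2$ and dividing by $\Phi$ (using $\Delta_\omega\Phi=\Phi(\tau-\Phi)-Q$), then substituting $K_\omega=\tfrac\alpha2(\tau-\Phi)^2+\alpha Q$ for the Ricci contribution, and finally evaluating at a point where $\nabla Q=0$ and $\Delta_\omega Q\geq 0$, one extracts the claimed quadratic inequality in $Q$ whose positive root gives the desired upper bound. Together with $(\tau-\Phi)^2\leq\tau^2$ this produces $K_\omega\leq (3+2\alpha\tau)\tau/2$, and Klingenberg yields the first bound.

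\emph{Where the work is.} The technical heart is the Bochner computation for $Q$: the Ricci term $K_\omega|\nabla\Phi|^2$ reintroduces $Q$ quadratically via the very formula for $K_\omega$, so at the maximum one must carefully track the competing contributions of $(\tau-\Phi)^2$, $\Phi(\tau-\Phi)$, and the inevitable $|\mathrm{Hess}\,\Phi|^2$ (which gives a favorable sign) in order to isolate an explicit $Q$-bound with clean constants. Working with $Q=|\nabla\Phi|_\omega^2/\Phi$ rather than with $|\nabla\log\Phi|^2$ is essential, since the latter blows up at the zeros of $\bm\phi$ and precludes any direct maximum-principle argument.
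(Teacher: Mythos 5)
Your overall strategy is the same as the paper's: bound the Gaussian curvature of $\omega$ and of $e^{2\alpha\Phi}\omega$ from above and then invoke Klingenberg. The difference is that the paper simply cites \cite[Theorem 4.8, Lemma 4.9]{FPY} (and Corollary 4.6 there for the gradient bound $|\nabla\Phi|^2/\Phi\leq 3\tau/(2\alpha)$) for those curvature bounds, whereas you re-derive them. Your scalar curvature formulas and the reduction of the first bound to a maximum-principle estimate for $Q=|\nabla\Phi|^2_\omega/\Phi=|\nabla^{1,0}\bm\phi|_h^2$ are the right ingredients and your constants are consistent with the statement, but the Bochner computation at the maximum of $Q$ --- which you yourself identify as ``where the work is'' --- is only described, not carried out, so the quantitative bound $Q\leq \tau^2/2+3\tau/(2\alpha)$ is asserted rather than proved. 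That is acceptable for a sketch, but it is precisely the content being outsourced to \cite{FPY}.

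The genuine gap is your statement of Klingenberg's theorem. It is false that simple connectivity plus an upper curvature bound $K\leq K_{\max}$ yields $\mathrm{inj}\geq \pi/\sqrt{K_{\max}}$: a round two-sphere dumbbell with a long thin neck is simply connected, has $K\leq 1$ after smoothing the transitions, and has arbitrarily small injectivity radius because of the short closed geodesic around the (negatively curved) neck. Klingenberg's theorem requires $0<K\leq K_{\max}$, and the positivity is essential. Your curvature formulas do show $K\geq 0$ for both metrics, but you never remark that this is needed, and nonnegativity alone is still not the hypothesis of the classical theorem. The paper is careful about exactly this point: it proves a separate lemma, via a short-time Ricci flow approximation $g_t\to g$ with $K_{g_t}>0$ for $t>0$, upgrading Klingenberg from $0<K\leq 1$ to $0\leq K\leq 1$, because the a priori bounds from \cite{FPY} only guarantee nonnegativity. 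To close your argument you must either supply such a regularization step, or prove strict positivity of the curvature directly (which does hold here, since the strong maximum principle applied to $\Delta_\omega\log\Phi=\tau-\Phi$ forces $\sup\Phi<\tau$, whence $S_\omega\geq\alpha(\tau-\sup\Phi)^2>0$ and similarly for the rescaled metric); as written, neither is done.
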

This immediately follows from \cite[Theorem 4.8, Lemma 4.9]{FPY} and the following improvement of the well-known Klingenberg's theorem on the lower bound on injectivity radius. 

\begin{lemma}
	Let $g$ be any smooth Riemannian metric on $S^2$ with $0\leq K_g\leq 1$, then
	\[
	\text{inj}\left(S^2, g\right)\geq \pi. 
	\]
\end{lemma}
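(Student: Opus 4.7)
The plan is to reduce to the classical Klingenberg theorem by smoothing $g$ with a short-time two-dimensional Ricci flow. By Gauss-Bonnet, $\int_{S^2} K_g\,dA_g = 4\pi$, so $K_g\not\equiv 0$. I would evolve $g$ by the (unnormalized) Ricci flow $\partial_t g(t) = -R_{g(t)}\,g(t)$ with $g(0)=g$, which admits a smooth solution on some short interval $[0,t_0)$. The scalar curvature evolves by $\partial_t R = \Delta_{g(t)}R + R^2$, so the strong maximum principle applied to the initial data $R(0)=2K_g\geq 0$ (not identically zero) produces $R(t)>0$ pointwise on $S^2$ for every $t\in(0,t_0)$; equivalently $K_{g(t)}>0$ strictly.

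Next I would control the upper curvature bound along the flow. At a spatial maximum of $R$ the Laplacian term is nonpositive, so $R_{\max}(t)$ is dominated by the solution of the ODE $\phi'=\phi^2$ with $\phi(0)=R_{\max}(0)\leq 2$. This gives $R_{\max}(t)\leq 2/(1-2t)$, i.e. $K_{\max}(g(t))\leq 1/(1-2t)$ for $t\in[0,1/2)$. Since $(S^2,g(t))$ is compact, simply connected, even-dimensional, oriented, and satisfies $0<K_{g(t)}\leq 1/(1-2t)$, the classical Klingenberg theorem applies and yields
\[
\operatorname{inj}(S^2,g(t)) \;\geq\; \pi\big/\sqrt{K_{\max}(g(t))} \;\geq\; \pi\sqrt{1-2t}.
\]

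Finally, I would pass to the limit $t\to 0^+$. The Ricci flow solution is smooth in $(x,t)$, so $g(t)\to g$ in $C^\infty$; since the injectivity radius is continuous in the $C^2$ topology on the space of smooth Riemannian metrics on a closed manifold, $\operatorname{inj}(S^2,g)=\lim_{t\to 0^+}\operatorname{inj}(S^2,g(t))\geq\pi$. The crux of the argument is the instantaneous upgrade from $K\geq 0$ to $K>0$ furnished by the strong maximum principle, at the cost of only an $O(t)$ deterioration of the upper curvature bound that vanishes in the limit; this bypasses the strict-positivity hypothesis needed to invoke classical Klingenberg in even dimensions, which is the principal obstacle.
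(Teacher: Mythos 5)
Your proof is correct and follows essentially the same route as the paper: smooth $g$ by a short-time two-dimensional Ricci flow, use Gauss--Bonnet plus the strong maximum principle to get strictly positive curvature for $t>0$, apply the classical Klingenberg estimate to the flowed metrics, and pass to the limit $t\to 0^+$ using smooth convergence. The only cosmetic difference is that you track the upper curvature bound via the explicit ODE comparison $K_{\max}(g(t))\leq 1/(1-2t)$, whereas the paper rescales $g_t$ by $\sup K_{g_t}$ before invoking Klingenberg; these are equivalent.
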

If $0<K_g\leq 1$, the stated estimate follows precisely from Klingenberg \cite{Kl}. Under the weaker assumption $0\leq K_g\leq 1$, the Ricci flow initiated from $g$ yield a family of approximation metrics $\{g_t\}_{t\in [0,\epsilon)}$. Since $\int_{S^2} K_g \text{dvol}_g=4\pi>0$, the Gaussian curvature of $g_t$ is positive for $t\in (0,\epsilon)$ (c.f. \cite[Sect 7.1, Chapter 5]{Chow}). For this family, $0<K_{g_t}\leq K_t$ where $K_t=\sup_{S^2} K_{g_t}$ and the rescaled metric satisfies 
\[
0< K_{K_t\cdot g_t}\leq 1. 
\]
Applying the Klingenberg's estimate for $K_t\cdot g_t$ and notice the smooth convergence $K_t\cdot g_t\longrightarrow g$ as $t\to 0$, we obtain that $\text{inj}_g(S^2)\geq \pi$. 

Since this theorem shows a lower bound on injectivity radius independent of the particular choice of Higgs field, it can be used to study the convergence of Einstein-Bogomol'nyi metrics with a sequence of varying Higgs fields.

\section{Dissolving limit}

Take any sequence $(\omega_i, h_i, \bm\phi)\in \mathfrak{M}_{EB}(L, \tau;\bm \phi)$ with $\Vol_{\omega_i}\to \underline V$, integration of the first equation in the system \eqref{eqn:GV0} gives 
\begin{equation*}
\int_{\mathbf{P}^1}\Phi_i \omega_i
=
\tau \Vol_{\omega_i} - 4\pi N,
\end{equation*}
which means that
\begin{equation}
\int_{\mathbf{P}^1}\Phi_i\text{dvol}_{g_i} \to 0. 
\end{equation}
Recall the basic equation about the state function $\Phi_i$:
\begin{equation}
\label{eqn:stateequation}
\Delta_{g_i} \Phi_i
=
- \frac{|\nabla\Phi_i|^2}{\Phi_i} + \Phi_i(\tau-\Phi_i). 
\end{equation}
Integrating on both sides shows that 
\begin{equation}
\int_{\mathbf{P}^1}\frac{|\nabla\Phi_i|^2}{\Phi_i} \text{dvol}_{g_i}
=
\int_{\mathbf{P}^1}\Phi_i(\tau- \Phi_i) \text{dvol}_{g_i}
\leq 
\tau\int_{\mathbf{P}^1}\Phi_i \text{dvol}_{g_i} \to 0, 
\end{equation}
and this fact together with the uniform $C^1$ bound \cite[Corollary 4.6]{FPY}:
\[
|\nabla^{1,0}\bm\phi|_{h_i}^2
=\frac{|\nabla\Phi_i|^2}{\Phi_i}
\leq 
\frac{3\tau}{2\alpha}
\]
implies 
\begin{equation}
\int_{\mathbf{P}^1}\left( \Delta_{g_i} \Phi_i \right)^p \text{dvol}_{g_i} \to 0
\end{equation}
for any $p\geq 1$. 
For the conformally rescaled metric $k_i=e^{2\alpha\Phi_i} g_i$, 
\begin{equation}
\int_{\mathbf{P}^1}(\Delta_{k_i} \Phi_i)^p \text{dvol}_{k_i} 
=
\int_{\mathbf{P}^1}e^{2(1-p)\alpha\Phi_i} \left( \Delta_{g_i} \Phi_i\right)^p \text{dvol}_{g_i} \to 0
\end{equation}
for any $p\geq 1$. 

It is shown in \cite{FPY} that the sequence of metric $k_i=e^{2\alpha\Phi_i} g_i$ has a (subsequential) Cheeger-Gromov limit (since the diameter is uniformly bounded from above) in $C^{2,\beta}$ sense (for any $\beta\in (0,1)$). More precisely, Lemma 4.14 of \cite{FPY} says that for some sequence $\sigma_n\in PSL(2,\mathbb{C})$, $k_n'=\sigma_n^* k_n \to k_\infty'$ in $C^{2,\beta}$ sense. Denote all quantities pulled back under $\sigma_n$ with an extra $'$.  Seeing the fact 
$\int_{\mathbf{P}^1}(\Delta_{k_i'}\Phi_i')^p\text{dvol}_{k_i'} 
=
\int_{\mathbf{P}^1}(\Delta_{k_i} \Phi_i)^p \text{dvol}_{k_i}\to 0$ 
and $\int_{\mathbf{P}^1}\Phi_i'\text{dvol}_{k_i'}=\int_{\mathbf{P}^1}\Phi_i e^{2\alpha\Phi_i} \text{dvol}_{g_i}\to 0$, we could use the standard $W^{2,p}$ estimate (for the family of metrics $k_i'$) to conclude the existence of $K>0$ (independent of $i$) such that
\begin{equation}
|\!| \Phi_i' - \frac{1}{\Vol_{k_i'}}\int_{\mathbf{P}^1}\Phi_i' \text{dvol}_{k_i'} |\!|_{W^{2,p}}^p
\leq 
K \int_{\mathbf{P}^1}(\Delta_{k_i'} \Phi_i')^p \text{dvol}_{k_i'}.
\end{equation}
The family of metrics $k_i'$ obviously has uniform Sobolev constant, and it then follows that as $i\to +\infty$:
\begin{enumerate}
	\item 
	\[
	|\!|\Phi_i|\!|_{C^0}
	= |\!|\Phi_i'|\!|_{C^0}\to 0;
	\]
	\item 
	\[
	|\!|\nabla_{k_i} \Phi_i|\!|_{C^0} 
	=
	|\!|\nabla_{k_i'} \Phi_i'|\!|_{C^0}
	\to 0;
	\]
	\item 
	\[
	|\!| \nabla_{k_i'}\Phi_i'|\!|_{C^\beta}\to 0. 
	\]
\end{enumerate}

When $\bm\phi$ is stable, for any sequence $(\omega_n, h_n, \bm\phi)\in \mathfrak{M}_{EB}(L, \tau;\bm\phi)$ with $\Vol_{\omega_n}\to \underline V$, the above estimates shows that 
	\begin{equation}
	\lambda(\omega_n, h_n, \bm\phi)=
	\frac{1}{\Vol_{\omega_n}}
	\int_{\mathbf{P}^1}|\bm\phi|^{-4\alpha\tau} \Phi_n^{2\alpha\tau}e^{-2\alpha\Phi_n}\omega_0
	\to 0
	\end{equation}
	since $|\bm\phi|^{-4\alpha\tau}$ is $L^1$. This gives another proof to part of Proposition \ref{prop:alternative}.


\vspace{0.5cm}

Let $v_n' = \log \Phi_n' - 4\pi G_n'$ with $G_n'$ being the Green's function of $k_n'$ with an order $n_j$ pole at $p_{j,n}'=\sigma_n^{-1}(p_j)$ for all $j$. It satisfies 
\begin{equation}\label{error-Laplacian-equation}
\Delta_{k_n'}  v_n'
=
(\tau- \Phi_n') e^{-2\alpha \Phi_n'} - \frac{4\pi N}{\nu_n'}.
\end{equation}
where $\nu_n'=\Vol_{k_n'}=\int_{\mathbf{P}^1}e^{2\alpha\Phi_n}\text{dvol}_{g_n}\to \underline V$. The RHS of this equation converges to $0$ in $C^{1,\beta}$ sense according the the above estimates, and therefore the Schauder's estimate implies 

\begin{equation}
|\!| v_n'- \frac{1}{\Vol_{k_n'}}\int_{\mathbf{P}^1}v_n' \text{dvol}_{k_n'} |\!|_{C^{3,\beta}}
\to 
0
\end{equation}
where the $C^{3,\beta}$ is measured in the fixed limit metric $k_\infty'$. In the situation considered in \cite{FPY} where we use continuity method to prove existence of Einstein-Bogomol'nyi metrics with volume $V\in \left( \frac{4\pi N}{\tau}, +\infty\right)$, $\Vol_{\omega_n}$ has a definite distance from $\underline{V}$ and therefore there is some point on $\mathbf{P}^1$ where $\log \Phi_n'=\log\left( \tau -  \frac{4\pi N}{\Vol_{\omega_n}}\right)$ is bounded from below and as a consequence $v_n'$ is uniformly bounded. In the current situation, we are exactly in the case $\Vol_{\omega_n}\to \underline{V}$ and we do not expect $v_n'$ to be uniformly bounded. Indeed, because of the previously established result $\Phi_n'\to 0$, there must hold that
\begin{equation}
\int_{\mathbf{P}^1}v_n' \text{dvol}_{k_n'} 
\to 
-\infty
\end{equation}
and in particular
\begin{equation}
v_n'\longrightarrow_{uniformly}-\infty, 
\end{equation}
and 
\begin{equation}
\label{eqn:shadow}
\frac{\Phi_n'}{e^{\frac{1}{\Vol_{k_n'}}\int_{\mathbf{P}^1}v_n' \text{dvol}_{k_n'}}} \to e^{4\pi G_\infty'}  
\end{equation}
where $G_\infty'$ is the Green function of $k_\infty'$ with poles defined by the limit divisor $D_\infty'=\lim_{n\to \infty} \sigma_n^{-1}(D)$. 

The scalar curvature of the conformal rescaled metric $k_n=e^{2\alpha\Phi_n}g_n$ is
\begin{equation}\label{scalarofauxmetric}
S_{k_n}
=
e^{-2\alpha\Phi_n}(S + \alpha \Delta_{g_n} \Phi_n)
=
\alpha\tau(\tau-\Phi_n)e^{-2\alpha\Phi_n} 
\end{equation}
and converges uniformly to $\alpha\tau^2=\frac{\tau}{N}$, and the same happens for the sequence $k_n'$. Therefore the above limit metric $\omega_{k_\infty'}=\sigma^*\left( \frac{\underline V}{2\pi}\omega_{FS} \right)$ by the uniqueness (module automorphism of $\mathbf{P}^1$) of constant curvature K\"ahler metric inside the K\"ahler class $\frac{\underline V}{2\pi} [\omega_0]$. The consequence is that 
\begin{equation}
\label{eqn:metric-convergence}
(\sigma_n\circ \sigma^{-1})^*\omega_n
\longrightarrow 
\frac{\underline V}{2\pi} \omega_0
\end{equation}
in $C^{1,\beta}$ sense (because $\Phi_n'$ converges in $C^{1,\beta}$ sense to $0$).

 Let $\Sigma$ be a Riemann surface and $L$ be a holomorphic line bundle over $X$. Suppose $\sigma:\Sigma\rightarrow \Sigma$ is a holomorphic automorphism, and $\widetilde \sigma:L\rightarrow L$ is a ``holomorphic lift'' of the action $\sigma$ to the total space, i.e. $\widetilde \sigma$ is a holomorphic map from the total space of $L$ to $L$ covering $\sigma$ and mapping fiber complex linearly to fiber. Let $h$ be a Hermitian metric on $L$, then the formula $\left| p\right|_{\widetilde \sigma^*h}^2:= \left| \widetilde\sigma\left(p\right)\right|_h^2$ (for $p\in L$) defines a new Hermitian metric $\widetilde\sigma^*h$ on $L$, called the pulled-back by $\widetilde\sigma$.  For any smooth section $s$ of $L$, the formula $\left(\widetilde\sigma^*s\right)(y):=\widetilde \sigma^{-1}\left( s\left(\sigma(y)\right)\right)$ defines a smooth section of $L$, called the pulled-back section. The formula 
\[
\left| \left( \widetilde\sigma^* s\right)(y)\right|_{\left(\widetilde\sigma^*h\right)|_y}^2 
= 
\left| \widetilde\sigma\left( \left( \widetilde\sigma^*s\right)(y)\right)\right|_{h|_{\sigma(y)}}^2
=
\left| s\left(\sigma(y)\right)\right|_{h|_{\sigma(y)}}^2
=
\left(  \sigma^*\left( \left|s\right|_h^2\right) \right) (y)
\]
tells that $\left| \widetilde\sigma^*s\right|_{\widetilde\sigma^*h}^2=\sigma^*\left( \left|s\right|_h^2\right)$. It follows that $iF_{\widetilde \sigma^*h}= \sigma^*\left( iF_h\right)$. 

The holomorphic map $\sigma:\mathbf{P}^1\longrightarrow \mathbf{P}^1$ admits a natural linearization, i.e. a lift to a bundle map, $\widetilde \sigma: \mathcal{O}(N)\longrightarrow \mathcal{O}(N)$. For $N=-1$ and $\sigma=\left(\begin{matrix} a & b\\ c & d\end{matrix}\right)\in SL(2,\mathbf{C})$, $\widetilde \sigma$ takes the form 
\[
\left( [z_0:z_1], \left(\zeta_0, \zeta_1\right)\right)\mapsto \left( [a z_0+bz_1: cz_0+dz_1], \left( a\zeta_0+b\zeta_1, c\zeta_0+d\zeta_1\right)\right)
\]
when $\mathcal{O}(-1)\subset \mathbf{P}^1\times \mathbf{C}^2$. Under the above notations, for any $\sigma\in \Aut(\mathbf{P}^1)$ and choice of a holomorphic lift $\widetilde\sigma$, if $\left(\omega, h, \bm\phi\right)\in \mathfrak{M}_{EB}\left(L, \tau;\bm\phi\right)$, then $\left(\sigma^*\omega, \widetilde\sigma^*h, \widetilde\sigma^*\bm\phi\right)\in \mathfrak{M}_{EB}\left(L, \tau; \widetilde\sigma^*\bm\phi\right)$

Relabeling $\sigma_n\circ \sigma^{-1}$ in \eqref{eqn:metric-convergence} as $\sigma_n$, the triple $\left(\omega_n',h_n', \bm\phi_n'\right)=\left( \sigma_n^*\omega_n, \widetilde{\sigma}_n^*h_n, \widetilde\sigma_n^*\bm\phi\right)$  is an Einstein-Bogomol'nyi metric, i.e. the triple satisfies 
\begin{equation}
\label{eqn:GV-pullback}
\left\{
\begin{array}{rl}
iF_{h_n'} + \frac{1}{2} (|\bm\phi_n'|_{h_n'}^2 - \tau)\omega_n'
& = 0,\\
S_{\omega_n'} + \alpha ( \Delta_{\omega_n'} + \tau )( |\bm\phi_n'|_{h_n'}^2 - \tau ) & = 0, 
\end{array}
\right.
\end{equation}
for each $n$.

\begin{theorem}
	\label{thm:dissolving}
	For any sequence of Einstein-Bogomol'nyi metrics $\left(\omega_n, h_n, \bm\phi\right)$ on $\mathbf{P}^1$ with $\Vol_{\omega_n}\to \frac{4\pi N}{\tau}$, there exists a sequence $\sigma_n\in PSL(2,\mathbf{C})$ such that $\sigma_n^*\omega_n$ converges in $C^{1,\beta}$ sense to $\frac{\underline{V}}{2\pi}\omega_0$ and $\widetilde \sigma_n^*h_n$ converges uniformly to $0$ while its curvature converges in $C^{1,\beta}$ sense to $N\omega_0$. 
\end{theorem}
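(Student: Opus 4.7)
The first claim is essentially contained in the preceding discussion: after the relabeling $\sigma_n\leftarrow\sigma_n\circ\sigma^{-1}$, equation \eqref{eqn:metric-convergence} gives exactly $\sigma_n^*\omega_n\to\frac{\underline V}{2\pi}\omega_0$ in $C^{1,\beta}$. The plan for the two remaining assertions about $\widetilde\sigma_n^*h_n$ is to derive them directly from the pulled-back first equation in \eqref{eqn:GV-pullback}, using the already-established uniform vanishing of $\Phi_n'$ together with the metric convergence above.

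For the curvature, one reads off
\[
iF_{\widetilde\sigma_n^*h_n} \;=\; \tfrac{1}{2}(\tau-\Phi_n')\,\sigma_n^*\omega_n,
\]
and combines: (a) the convergence $\Phi_n'\to 0$, upgraded from uniform to $C^{1,\beta}$ by routine bootstrapping on the state equation \eqref{eqn:stateequation} using the $C^{2,\beta}$ compactness of $k_n'$ and the vanishing of $\|\nabla_{k_n'}\Phi_n'\|_{C^\beta}$ recorded in item $(3)$ above; (b) $\sigma_n^*\omega_n\to\frac{\underline V}{2\pi}\omega_0=\frac{2N}{\tau}\omega_0$ in $C^{1,\beta}$. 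The product then converges to $\frac{\tau}{2}\cdot\frac{2N}{\tau}\,\omega_0=N\omega_0$ in $C^{1,\beta}$, as desired.

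For the uniform decay of the Hermitian metric, write $\widetilde\sigma_n^*h_n=h_0\,e^{W_n}$. Since $h_0=h_{FS}^N$ a direct computation gives $iF_{h_0}=N\omega_0$, so the curvature convergence above translates to
\[
-i\partial\bar\partial W_n \;=\; iF_{\widetilde\sigma_n^*h_n}-N\omega_0 \;\longrightarrow\; 0 \qquad\text{in } C^{1,\beta}.
\]
Since harmonic functions on $\mathbf{P}^1$ are constants, standard Schauder estimates force $\|W_n-\bar W_n\|_{C^{3,\beta}}\to 0$, where $\bar W_n:=\tfrac{1}{2\pi}\int_{\mathbf{P}^1}W_n\,\omega_0$. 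To show $\bar W_n\to -\infty$, one exploits the Higgs field via the pointwise identity
\[
|\bm\phi_n'|_{h_0}^2 \;=\; \Phi_n'\,e^{-W_n},
\]
normalizing the lift $\widetilde\sigma_n$ (free up to a factor in $\mathbf{C}^*$) so that $\sup_{\mathbf{P}^1}|\bm\phi_n'|_{h_0}^2=1$. At a point $y_n\in\mathbf{P}^1$ realizing this supremum, $e^{-W_n(y_n)}=\Phi_n'(y_n)^{-1}\to+\infty$, and since $W_n=\bar W_n+o(1)$ uniformly, this forces $\bar W_n\to-\infty$, hence $W_n\to-\infty$ uniformly, i.e. $\widetilde\sigma_n^*h_n\to 0$.

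The substantive analytic content, namely the $C^{2,\beta}$-compactness of $k_n'$ modulo $PSL(2,\mathbf{C})$ and the uniform vanishing of $\Phi_n'$, has already been carried out in the preceding pages; the main technical point I would expect to require care is the scalar normalization of the lift $\widetilde\sigma_n$ used to pin down the constant part of $W_n$, since without such a choice the pulled-back Hermitian metric is only determined up to an overall $\mathbf{C}^*$ scaling of fibers and its convergence to $0$ could otherwise be spoiled by a wandering scalar factor.
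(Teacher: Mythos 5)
Your proposal is correct, and for the first two assertions it coincides with the paper's argument: the metric convergence is exactly \eqref{eqn:metric-convergence}, and the curvature convergence is read off from the first equation of \eqref{eqn:GV-pullback} using $\Phi_n'\to 0$ in $C^{1,\beta}$ (which, as you note, is already contained in items (1) and (3) of the preceding estimates, so no further bootstrapping of \eqref{eqn:stateequation} is actually needed). For the uniform decay of the Hermitian metric the two arguments differ only in the choice of reference potential. The paper sets $v_n'=\log\Phi_n'-4\pi G_n'$ with $G_n'$ the Green's function of the varying metrics $k_n'$ with poles along $\sigma_n^{-1}(D)$, controls the oscillation of $v_n'$ by Schauder applied to \eqref{error-Laplacian-equation}, and drives the mean to $-\infty$ from $\Phi_n'\to 0$; you instead take the potential $W_n$ relative to the fixed pair $(h_0,\omega_0)$, control its oscillation by Schauder applied to $-i\partial\bar\partial W_n=iF_{\widetilde\sigma_n^*h_n}-N\omega_0\to 0$, and pin down the additive constant by evaluating $|\bm\phi_n'|_{h_0}^2=\Phi_n'e^{-W_n}$ at a maximum point after normalizing the lift. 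Both are the same ``oscillation plus average'' scheme; your version avoids Green's functions and is somewhat cleaner, and you correctly isolate the $\mathbf{C}^*$-ambiguity of the lift $\widetilde\sigma_n$, a point the paper leaves implicit (its choice of the intrinsic potential $4\pi G_n'$ effectively performs the same normalization). What the paper's route buys in exchange is the finer renormalized asymptotic \eqref{eqn:shadow}, which identifies the profile $e^{4\pi G_\infty'}$ along which the Hermitian metric degenerates; your argument establishes the theorem as stated but does not recover that refinement.
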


The convergence \eqref{eqn:shadow} suggests that even though $\widetilde\sigma_n^*h_n$ converges to $0$ uniformly, certain kind of ``renormalization'' might be used to see how exactly the Hermitian metric is degenerating to $0$.

\section{Large volume limit}
\label{sect:large}

In this section we will study the limiting behavior of Einstein-Bogomol'nyi metrics when the volume goes to $\infty$ in case of a stable Higgs field $\bm\phi$. The case of a strictly polystable Higgs field will be treated in next section. 

\subsection{Limit of Maximal solution}

Proposition \ref{prop:alternative} says that if $\bm\phi$ is stable, then the temper of a sequence of Einstein-Bogomol'nyi metrics must converge to $0$ if the volume tends to $+\infty$ (notice that this is not the case when $\bm\phi$ is strictly polystable). In the study of solution to \eqref{eqn:HS-lambda-v} \cite{HS}, Han-Sohn found a threshold $\lambda_c>0$ such that for any $\lambda\in (0,\lambda_c)$ there are at least two solutions, one denoted by $v_\lambda^M$ and another by $v_\lambda^{LS}$. The solution $v_\lambda^M$ is found using the supersolution/subsolution method in PDE as \cite{Yang}, called the \emph{maximal solution} or \emph{topological solution} in the literature; the solution $v_\lambda^{LS}$ is obtained via a Leray-Schauder degree theory argument. Presumably these two solutions would behave very differently as $\lambda\to 0$. We study the behavior of the branch of \emph{maximal solution} in this section.

Let  $( g_\lambda^M, h_\lambda^M, \bm\phi)\in \mathfrak{M}_{EB}(L, \tau, \bm\phi)$ be coming from the maximal solution $v_\lambda^M$ to $\eqref{eqn:HS-lambda-v}_\lambda$ constructed by Yang and Han-Sohn \cite{Yang, HS} where $\lambda\in (0,\lambda_c]$. It satisfies the followng estimates and convergence properties( proved in \cite[(1.21)]{HS}) as $\lambda\to 0$:

\begin{align}
\label{est:Han-Sohn-convergence}
& v_\lambda^M 
 \longrightarrow -\log |\bm\phi|_{h_0}^2+\log \tau, \;\;\; a. e. \text{ on }\Sigma;\\
& v_\lambda^M
 \longrightarrow -\log |\bm\phi|_{h_0}^2 + \log \tau \text{ in }W^{1,q}, \;\;\; \forall q\in [1,2);\\
& iF_{ h_\lambda^M} 
=
\frac{1}{2} (\tau - |\bm\phi|_{ h_\lambda^M}^2) \text{dvol}_{ g_\lambda^M}
 \longrightarrow 
2\pi \sum_i n_i [p_i],  \text{ in the sense of measures as } \lambda\to 0;\\
& |\!| v_\lambda^M - (-\log |\bm\phi|_{h_0}^2 + \log \tau) |\!|_{C^s(K)}
 \leq C_{K,s} \lambda\; \text{ for any }K\Subset \mathbf{P}^1\backslash \{\bm\phi=0\}, s\in \mathbb{N}.
\end{align}
Notice that we have not been able to show that the Hermitian potential of the Einstein-Bogomol'nyi metric converges to $-\log |\bm\phi|_{h_0}^2 + \log\tau$ almost everywhere on $\mathbf{P}^1$ as the volume goes to $+\infty$. This is the reason why we consider only the \emph{maximal solution} for which this convergence has been established by \cite{HS}.

The new geometric observations in this article are about the convergence of the underlying Riemannian metrics:
\begin{align}
\lambda  g_\lambda^M
& \longrightarrow 
\frac{\tau^{2\alpha\tau} e^{-2\alpha\tau}}{|\bm\phi|_{h_0}^{4\alpha\tau}} g_0
=
\widehat{g}_{(\bm\phi)}, \text{ in }C^\infty_{loc}(\Sigma\backslash\{p_1,\cdots, p_d\}) \text{ as }\lambda\to 0;\\
\text{Ric }(\lambda  \omega_\lambda^M)
& \longrightarrow 
\frac{4\pi}{N} \sum_i n_i [p_i].
\end{align}
where $\widehat{g}_{(\bm\phi)}$ is obviously the unique \emph{Euclidean cone metric} on $S^2$ with cone-angle $2\pi \beta_j=2\pi (1- \frac{2n_i}{N})$ at $p_i$ (for $i=1,2,\cdots, d$) \cite{Th} and with total volume equals to $V_{\alpha,\tau,\bm\phi}:=\tau^{2\alpha\tau}e^{-2\alpha\tau}\int_{\mathbf{P}^1}|\bm\phi|_{h_0}^{-4\alpha\tau}\omega_0\in (0,+\infty)$, which is abbreviated as $V_{(\bm\phi)}$ for simplicity. The first convergence follows from the last convergence in \eqref{est:Han-Sohn-convergence}, and for the convergence about Ricci form is established using $\text{Ric }(\lambda  \omega_\lambda^M)= 2\alpha i\partial\bar\partial |\bm\phi|_{h_\lambda^M}^2 + 2\alpha\tau iF_{h_\lambda^M}$ and 
\[
\int_{\mathbf{P}^1} \chi i\partial\bar\partial \left|\bm\phi\right|_h^2 
= 
\int_{\mathbf{P}^1} e^{v_\lambda^M}|\bm\phi|_{h_0}^2 \cdot i\partial\bar\partial \chi
\to 
\int_{\mathbf{P}^1}\tau \cdot i\partial\bar\partial \chi
=
0
\]
for any smooth function $\chi$ on $\mathbf{P}^1$ by the Dominate Convergence Theorem. The next proposition says $\Vol_{g_\lambda^M}$ is of the same order of $\frac{V_{(\bm\phi)}}{\lambda}$ as $\lambda\to 0$. 

\begin{proposition}
	If $\lambda_1<\lambda_2$, then $\Vol_{g_{\lambda_1}^M}>\Vol_{g_{\lambda_2}^M}$, and $\lim_{\lambda\to 0} \lambda\cdot\Vol_{g_\lambda^M}= V_{(\bm\phi)}$. 
\end{proposition}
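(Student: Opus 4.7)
The plan is to derive both statements from the identity
\[
\lambda\cdot\Vol_{g_\lambda^M} \;=\; \int_{\mathbf{P}^1} e^{2\alpha\tau v_\lambda^M - 2\alpha |\bm\phi|_{h_0}^2 e^{v_\lambda^M}}\,\omega_0,
\]
combined with a pointwise comparison of $v_\lambda^M$ in $\lambda$ and a dominated convergence argument.

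For the limit $\lim_{\lambda\to 0}\lambda\cdot\Vol_{g_\lambda^M} = V_{(\bm\phi)}$, I would apply the dominated convergence theorem to the integrand above. The universal upper bound $v_\lambda^M \leq -\log|\bm\phi|_{h_0}^2+\log\tau$ built into the Yang/Han--Sohn maximal solution, together with $\Phi_\lambda^M\geq 0$, yields the $\lambda$-uniform pointwise estimate
\[
e^{2\alpha\tau v_\lambda^M - 2\alpha\Phi_\lambda^M} \;\leq\; \tau^{2\alpha\tau}\,|\bm\phi|_{h_0}^{-4\alpha\tau}.
\]
The dominating function is $\omega_0$-integrable on $\mathbf{P}^1$ precisely because $\alpha\tau = 1/N$ and the stability assumption $n_j<N/2$ forces $|\bm\phi|_{h_0}^{-4\alpha\tau}\sim|z|^{-4n_j/N}$ near each $p_j$ with $4n_j/N<2$. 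The a.e.\ convergence $v_\lambda^M \to -\log|\bm\phi|_{h_0}^2+\log\tau$ recalled in \eqref{est:Han-Sohn-convergence} then produces a.e.\ convergence of the integrand to $\tau^{2\alpha\tau}e^{-2\alpha\tau}|\bm\phi|_{h_0}^{-4\alpha\tau}$, whose $\omega_0$-integral is exactly $V_{(\bm\phi)}$.

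For the monotonicity I would first establish the pointwise comparison $v_{\lambda_1}^M \geq v_{\lambda_2}^M$ when $\lambda_1 < \lambda_2$. Substituting $v_{\lambda_2}^M$ into $\eqref{eqn:HS-lambda-v}_{\lambda_1}$ and using $\tau - \Phi_{\lambda_2}^M \geq 0$ exhibits it as a smooth subsolution of $\eqref{eqn:HS-lambda-v}_{\lambda_1}$, since the difference between the $\lambda_2$ and $\lambda_1$ right-hand sides is $(\lambda_2^{-1}-\lambda_1^{-1})(\tau-\Phi_{\lambda_2}^M)e^{2\alpha\tau v_{\lambda_2}^M-2\alpha\Phi_{\lambda_2}^M}\leq 0$. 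Feeding this subsolution into the monotone iteration of \cite{Yang, HS} (run between $v_{\lambda_2}^M$ and a fixed smooth upper barrier that dominates all $v_\lambda^M$ uniformly in $\lambda\in(0,\lambda_c]$) produces a solution of $\eqref{eqn:HS-lambda-v}_{\lambda_1}$ lying above $v_{\lambda_2}^M$, which by the maximality of $v_{\lambda_1}^M$ forces $v_{\lambda_1}^M \geq v_{\lambda_2}^M$. Observing then that $v\mapsto e^{2\alpha\tau v - 2\alpha|\bm\phi|_{h_0}^2 e^v}$ is non-decreasing on the region $\{|\bm\phi|_{h_0}^2 e^v\leq\tau\}$, since its $v$-derivative equals $2\alpha(\tau-|\bm\phi|_{h_0}^2 e^v)$ times itself, integrating the pointwise inequality yields the weighted bound $\lambda_1\Vol_{g_{\lambda_1}^M}\geq\lambda_2\Vol_{g_{\lambda_2}^M}$; dividing by $\lambda_1<\lambda_2$ upgrades this to $\Vol_{g_{\lambda_1}^M}>\Vol_{g_{\lambda_2}^M}$.

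The main obstacle will be the rigorous justification of the pointwise comparison $v_{\lambda_1}^M\geq v_{\lambda_2}^M$: intuitively this is transparent since $v_\lambda^M$ rises towards its pointwise upper barrier as $\lambda\to 0$, but the nonlinearity is not globally monotone in $v$ (its $v$-derivative changes sign according to $2\alpha(\tau-\Phi)^2-\Phi$), so the scalar comparison principle does not apply directly and one genuinely needs the sub/super-solution iteration structure specific to the maximal branch. If that route turns out to be delicate, a fallback is to parametrize the maximal branch smoothly by $\lambda$ via \cite[Lemma 3.1]{FPY} and apply a maximum-principle argument to the linearization satisfied by $\partial_\lambda v_\lambda^M$, exploiting $\partial_\lambda F_\lambda = -\lambda^{-2}(\tau-\Phi_\lambda^M)e^{2\alpha\tau v_\lambda^M - 2\alpha\Phi_\lambda^M}\leq 0$.
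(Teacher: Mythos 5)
Your argument is correct and is essentially the paper's own proof: both rest on the pointwise monotonicity $v_{\lambda_1}^M > v_{\lambda_2}^M$ for $\lambda_1<\lambda_2$ (which the paper simply quotes from Han--Sohn rather than re-deriving via the sub/supersolution iteration you sketch) combined with the observation that $y\mapsto y^{2\alpha\tau}e^{-2\alpha y}$ is increasing on $[0,\tau]$, and then on passing to the limit in $\lambda\Vol_{g_\lambda^M}=\int_{\mathbf{P}^1}e^{2\alpha\tau v_\lambda^M-2\alpha\Phi_\lambda^M}\omega_0$. The only cosmetic difference is that the paper invokes monotone convergence of the volume forms of $\lambda g_\lambda^M$ where you use dominated convergence with the majorant $\tau^{2\alpha\tau}|\bm\phi|_{h_0}^{-4\alpha\tau}$; both are valid since stability gives $4\alpha\tau n_j = 4n_j/N<2$, so that majorant is integrable.
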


\begin{proof}
	Let us look at the conformal factor of $g_{\lambda}^M$, i.e. 
	\[
	\frac{1}{\lambda}e^{2\alpha\tau v_\lambda^M-2\alpha |\bm \phi|_{h_0}^2 e^{v_\lambda^M}} 
	=
	\frac{1}{\lambda}\mathcal{K}\left(|\bm \phi|_{h_\lambda^M}^2\right)
	\left| \bm \phi\right|_{h_0}^{-4\alpha\tau}
	\]
	with $\mathcal{K}(y)=y^{2\alpha\tau}e^{-2\alpha y}$ defined for $y\geq 0$. 
	
	By the monotonicity formula $v_{\lambda_1}^M>v_{\lambda_2}^M$ proved in \cite{HS} and the fact that $\mathcal{K}$ is strictly increasing on $[0,\tau]$, we have 
	\[
	\frac{1}{\lambda_1}e^{2\alpha\tau v_{\lambda_1}^M -2\alpha |\bm\phi|_{h_0}^2e^{v_{\lambda_1}^M}}
	>
	\frac{1}{\lambda_2}e^{2\alpha\tau v_{\lambda_2}^M -2\alpha |\bm\phi|_{h_0}^2e^{v_{\lambda_2}^M}}, 
	\]
	and therefore $\Vol_{g_{\lambda_1}^M}>\Vol_{g_{\lambda_2}^M}$. As $\lambda\to 0$,  the volume form of $\lambda g_\lambda^M$ pointwisely increases to the volume for of the flat conical metric $\tau^{2\alpha\tau}e^{-2\alpha\tau}|\bm\phi|_{h_0}^{-4\alpha\tau}\omega_0$ and therefore the volume converges to it also.
\end{proof}

\subsection{Gromov-Hausdorff convergence of the rescaled metric}

We have the relation between the rescaled metric $\lambda\omega_\lambda^M$ and other two model K\"ahler metrics on $\mathbf{P}^1$, one is a fixed smooth metric and another is a flat conical metric $\widehat{\omega}_{(\bm\phi)}=\tau^{2\alpha\tau}e^{-2\alpha\tau}|\bm\phi|_{h_0}^{-4\alpha\tau}\omega_0$:
\begin{equation}
\label{eqn:comparison}
e^{2\alpha\tau v_{\lambda_c}^M-2\alpha\tau}\omega_0
\leq 
\lambda\omega_\lambda^M
\leq 
\widehat\omega_{(\bm\phi)}, \;\; \forall \lambda\in (0, \lambda_c].
\end{equation}
The right inequality implies that any two points $q,q'$ on $\mathbf{P}^1$ could be joined by a smooth curve whose length is smaller than any number above the diameter of the metric space where the metric structure is induced from $\widehat\omega_{(\bm\phi)}$. This implies that 
\[
\underline d:=\text{diam}\left( \mathbf{P}^1, e^{2\alpha\tau v_{\lambda_c}^M-2\alpha \tau}\omega_0\right)
\leq \text{diam}\left(\mathbf{P}^1, \lambda \omega_\lambda^M\right)
\leq 
\text{diam}\left(\mathbf{P}^1, \widehat\omega_{(\bm\phi)}\right):=\overline d.
\]
And similarly, the left inequality in \eqref{eqn:comparison} tells that the family has a uniform lower bound on its diameter. This family of Riemannian metrics  $\lambda g_\lambda^M$ has nonnegative curvature and volume upper bound, the volume comparison theorem implies that the volume ratio is uniformly bounded from below, i.e. for any $q\in \mathbf{P}^1$, $\lambda\in (0,\lambda_c]$ and $r\in (0,\overline d]$, 
\begin{equation}
\label{bound:rescaled-injectivity}
\frac{\Vol_{\lambda g_\lambda^M}\left(B_{\lambda g_\lambda^M}(q,r)\right)}{r^2}\geq \kappa, 
\end{equation}
where $\kappa=\frac{\Vol_{e^{2\alpha\tau v_{\lambda_c}^M-2\alpha\tau}\omega_0}}{\overline d^2}$.

Based on the smooth convergence away from the zeros of $\bm\phi$, we can improve the estimate \eqref{est:Han-Sohn-convergence}.
\begin{proposition}~
	\begin{enumerate}
		\item 
	\[
	\lim_{n\to +\infty}\frac{\tau-\max_{\mathbf{P}^1} \Phi_{\lambda_n}^M}{\sqrt{\lambda_n}}=0;
	\]
	   \item 
	   For any $K\Subset \mathbf{P}^1\backslash \{p_1, \cdots, p_d\}$ and $m\in \mathbf{N}$, there exists $C_{K,m}>0$ independent of $n$ such that on $K$
	   \[
	   \left| \nabla_{\omega_0}^m \left( \tau - \Phi_{\lambda_n}^M\right) \right|_{\omega_0}\leq C_{K,m} \lambda_n^2.
	   \]
	\end{enumerate}
\end{proposition}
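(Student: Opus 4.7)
The plan is to derive both statements from the equation \eqref{eqn:HS-lambda-v} combined with the $C^s(K)$ convergence rate $\|v_\lambda^M - v_\infty\|_{C^s(K)} \le C_{K,s}\lambda$ recorded immediately above for the maximal branch, on any $K \Subset \mathbf{P}^1 \setminus \{p_1,\dots,p_d\}$, where $v_\infty := -\log|\bm\phi|_{h_0}^2 + \log\tau$. The key preliminary observation is that $v_\infty$ itself satisfies the ``limit equation'' away from the divisor: since $iF_{h_0} = N\omega_0$, the Poincar\'e-Lelong identity gives $\Delta_{\omega_0}\log|\bm\phi|_{h_0}^2 = 2N$ pointwise on $\mathbf{P}^1\setminus\{p_j\}$, hence $\Delta_{\omega_0} v_\infty + 2N \equiv 0$ there. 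My strategy is to exploit exactly this cancellation to extract an additional factor of $\lambda$ out of the equation.

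For (ii) I solve \eqref{eqn:HS-lambda-v} algebraically for $\tau - \Phi_\lambda^M$, obtaining
\[
\tau - \Phi_\lambda^M
=
\lambda\,\bigl(\Delta_{\omega_0} v_\lambda^M + 2N\bigr)\, e^{-2\alpha\tau v_\lambda^M + 2\alpha \Phi_\lambda^M}.
\]
On $K$ the parenthesis equals $\Delta_{\omega_0}(v_\lambda^M - v_\infty)$, which by the quoted rate is $O(\lambda)$ in $C^{s-2}(K)$; the exponential factor is uniformly bounded in every $C^{s'}(K)$-norm because $v_\lambda^M$ and $\Phi_\lambda^M$ have uniform $C^s(K)$ bounds for every $s$. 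Multiplying these two estimates upgrades the naive bound $\tau - \Phi_\lambda^M = O(\lambda)$ to $\tau - \Phi_\lambda^M = O(\lambda^2)$ in $C^{s-2}(K)$, and choosing $s$ large compared to $m$ yields the stated $|\nabla_{\omega_0}^m(\tau-\Phi_\lambda^M)|_{\omega_0} \le C_{K,m}\lambda^2$ on $K$.

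Statement (i) is then immediate from (ii): fixing any base point $p_0 \in \mathbf{P}^1 \setminus \{p_j\}$, the case $m=0$ of (ii) gives $\max_{\mathbf{P}^1}\Phi_{\lambda_n}^M \ge \Phi_{\lambda_n}^M(p_0) \ge \tau - C\lambda_n^2$, and hence $(\tau - \max\Phi_{\lambda_n}^M)/\sqrt{\lambda_n} \le C\lambda_n^{3/2} \to 0$. The only substantive input in the whole argument is the identity $\Delta_{\omega_0}v_\infty + 2N \equiv 0$ away from the divisor, which is the algebraic mechanism by which the quadratic improvement appears; once this is recognised, the rest reduces to a routine extraction from the pointwise identity displayed above together with the $C^s(K)$ convergence quoted from \cite{HS}, and the main potential obstacle -- an uncontrolled blow-up of $e^{-2\alpha\tau v_\lambda^M + 2\alpha\Phi_\lambda^M}$ as $\lambda \to 0$ -- is ruled out uniformly on $K$ by the same smooth convergence.
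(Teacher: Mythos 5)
Your proof is correct. For part (2) you follow essentially the same route as the paper: the whole content is the identity $\tau-\Phi_\lambda^M=\lambda\,\bigl(\Delta_{\omega_0}(v_\lambda^M-v_\infty)\bigr)e^{-2\alpha\tau v_\lambda^M+2\alpha\Phi_\lambda^M}$ together with the cancellation $\Delta_{\omega_0}v_\infty+2N=0$ away from the divisor and the $O(\lambda)$ rate in $C^s(K)$ from \cite{HS}; the paper writes the same identity as $\frac{\tau-\Phi_\lambda^M}{\lambda}=-2i\tr_{\lambda\omega_\lambda^M}\partial\bar\partial\bigl(v_\lambda^M-v_\infty\bigr)$, i.e.\ traced against the rescaled solution metric rather than against $\omega_0$, which forces it to run an induction on the order of differentiation to control the metric coefficients; your choice of the fixed background $\omega_0$ makes the bookkeeping more direct. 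For part (1) you genuinely diverge from the paper: the paper argues geometrically, identifying $\alpha(\tau-\max_{\mathbf{P}^1}\Phi_{\lambda_n}^M)^2/\lambda_n$ with the scalar curvature of $\lambda_n g_{\lambda_n}^M$ at the maximum point of $\Phi_{\lambda_n}^M$ (where $\nabla\Phi_{\lambda_n}^M=0$) and letting the smooth convergence to the flat cone metric force it to zero, whereas you simply evaluate the $m=0$ case of part (2) at a fixed base point away from the zeros and use $\Phi_\lambda^M\le\tau$ to get $0\le\tau-\max_{\mathbf{P}^1}\Phi_{\lambda_n}^M\le C\lambda_n^2$. Your version is shorter, sidesteps the question of where the maximum point of $\Phi_{\lambda_n}^M$ sits relative to the singular set, and yields the stronger rate $O(\lambda_n^{3/2})$ for the quotient in (1); its only cost is that (1) becomes logically dependent on (2), which is harmless here.
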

\begin{proof}
	By the smooth convergence of $\lambda_n \omega_{\lambda_n}^M$ to the flat conical metric away from $\{p_1, \cdots, p_d\}$, the minimum of the curvature of $\lambda_n \omega_{\lambda_n}^M$ with is $\frac{\alpha\left(\tau-\max_{\mathbf{P}^1}\Phi_{\lambda_n}^M\right)^2}{\lambda_n}$ must converge to $0$ as $n\to +\infty$. 
	
	To look at the $C^\infty_{loc}$ convergence of $\lambda_n g_{\lambda_n}^M$, observe that on $K$ by Mean Value Theorem
	\[
	\tau-\Phi_{\lambda_n}^M 
	\leq 
	\tau \left| \log\tau - \left(\log |\bm\phi|_{h_0}^2+v_{\lambda_n}^M \right)\right|
	\leq 
	C_{K,0}\lambda_n
	\]
	by the estimate \eqref{est:Han-Sohn-convergence}. As a consequence, the difference between the volume form of $\tau^{2\alpha\tau}e^{-2\alpha\tau}|\bm\phi|_{h_0}^{-4\alpha\tau}\omega_0$ and $\lambda_n \omega_{\lambda_n}^M$ is controlled by
	\[
	\sup_{p\in K}\max_{y\in \left[ \Phi_{\lambda_n}^M(p), \tau\right]} \mathcal{K}'(y)\left( \tau - \Phi_{\lambda_n}^M(p)\right) |\bm\phi|_{h_0}^{-4\alpha\tau}
	\]
	which is then controlled by $C_{K,0}'\lambda_n$. Similarly, the order $s$ derivative of the the difference is controlled by $C_{K,s}\lambda_n$. 
	
	For part two, use the formula 
	\[
	\frac{\tau - \Phi_\lambda^M}{\lambda}
	= 
	2\tr_{\lambda \omega_\lambda^M} \left( iF_{h_\lambda^M}\right)
	= 
	2\tr_{\lambda \omega_\lambda^M} \left( 
	iF_{h_0} - i\partial\bar\partial v_\lambda^M
	\right)
	= 
	-2i \tr_{\lambda \omega_\lambda^M} \partial\bar\partial \left(v_\lambda^M - \left(-\log|\bm\phi|_{h_0}^2 +\log \tau\right)\right)
	\]
	and the above estimate on $K$, together with induction on the order of differentiation, we conclude there exists $C_{K,m}$ depends only on $K, m$ such that
	\[
	\left|\nabla_{\omega_0}^m \left(\tau - \Phi_{\lambda_n}^M\right)\right|_{\omega_0}\leq C_{K,m}''\lambda_n^2.
	\] 
\end{proof}

\begin{theorem}
As metric spaces, $\left(\mathbf{P}^1, d_{\lambda\cdot g_{\lambda}^M}\right)$ converges to $\left( \mathbf{P}^1, d_{\widehat g_{(\bm\phi)}}\right)$ in Gromov-Hausdorff sense as $\lambda\to 0$. 
\end{theorem}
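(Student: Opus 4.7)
The plan is to show that the identity map $\mathrm{id}\colon (\mathbf{P}^1, d_{\lambda g_\lambda^M}) \to (\mathbf{P}^1, d_{\widehat g_{(\bm\phi)}})$ is an $\epsilon(\lambda)$-isometry with $\epsilon(\lambda) \to 0$ as $\lambda \to 0$; because it is a bijection onto a compact length space, this suffices for Gromov--Hausdorff convergence. One direction of the distortion estimate is essentially free: the pointwise comparison $\lambda\omega_\lambda^M \leq \widehat\omega_{(\bm\phi)}$ established in \eqref{eqn:comparison} immediately gives
\[
d_{\lambda g_\lambda^M}(x,y) \leq d_{\widehat g_{(\bm\phi)}}(x,y), \qquad \forall x,y \in \mathbf{P}^1,
\]
since any curve has shorter length in the smaller metric.

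For the reverse inequality the strategy is a surgery argument near the cone points. Fix $\epsilon > 0$ and choose $\delta>0$ small enough that $\delta < \tfrac{1}{3}\min_{i\neq j} d_{\widehat g_{(\bm\phi)}}(p_i,p_j)$ and $2d\delta < \epsilon/2$, where $d$ is the number of zeros of $\bm\phi$. On the compact set $K_\delta := \mathbf{P}^1 \setminus \bigcup_{j=1}^d B_{\widehat g_{(\bm\phi)}}(p_j,\delta)$, the $C^\infty_{loc}$ convergence $\lambda g_\lambda^M \to \widehat g_{(\bm\phi)}$ away from $\{p_1,\dots,p_d\}$ provides $\eta(\lambda,\delta) \to 0$ as $\lambda \to 0$ such that $(1-\eta)\,\widehat g_{(\bm\phi)} \leq \lambda g_\lambda^M \leq \widehat g_{(\bm\phi)}$ on $K_\delta$. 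Given $x,y \in \mathbf{P}^1$, let $\gamma\colon[0,L]\to\mathbf{P}^1$ be a minimizing $\lambda g_\lambda^M$-geodesic from $x$ to $y$, with $L = d_{\lambda g_\lambda^M}(x,y) \leq \overline{d}$. For each $j$ with $\gamma^{-1}(B_{\widehat g_{(\bm\phi)}}(p_j,\delta))\neq\emptyset$, let $t_j^-$ and $t_j^+$ be the first entry and last exit times into that ball, and construct $\tilde\gamma$ by replacing each $\gamma|_{[t_j^-,t_j^+]}$ with two radial segments through the apex $p_j$ in the flat cone metric $\widehat g_{(\bm\phi)}$, of total $\widehat g_{(\bm\phi)}$-length at most $2\delta$. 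The disjointness of the $\delta$-balls guarantees that $\tilde\gamma$ is a well-defined continuous curve from $x$ to $y$. Its $\widehat g_{(\bm\phi)}$-length is bounded by the sum of $(1-\eta)^{-1/2} L$ (coming from the portions inside $K_\delta$) and $2d\delta$ (coming from the replacements), yielding
\[
d_{\widehat g_{(\bm\phi)}}(x,y) \leq (1-\eta)^{-1/2}\, d_{\lambda g_\lambda^M}(x,y) + 2d\delta \leq d_{\lambda g_\lambda^M}(x,y) + \epsilon
\]
once $\lambda$ is small enough that $\bigl((1-\eta)^{-1/2}-1\bigr)\overline{d} < \epsilon/2$.

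The main obstacle is Step~2: one must control minimizing geodesics of $\lambda g_\lambda^M$ as they approach the singular locus of the conjectural limit. The sandwich \eqref{eqn:comparison} is crucial here, as it forces $d_{\lambda g_\lambda^M}(\cdot,p_j) \leq d_{\widehat g_{(\bm\phi)}}(\cdot,p_j)$ so that the cost of detouring through $p_j$ in $\widehat g_{(\bm\phi)}$ is bounded a priori by $2\delta$, independent of the actual behavior of the metric $\lambda g_\lambda^M$ inside the ball. Secondary technical points---existence of minimizing geodesics on the compact smooth manifold $(\mathbf{P}^1, \lambda g_\lambda^M)$, the fact that the radial segments through $p_j$ in the Thurston cone metric realize the diameter $2\delta$ of a $\delta$-ball about an apex of angle $2\pi\beta_j < 2\pi$, and well-definedness of the first/last entry times---are all standard.
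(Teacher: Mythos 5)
Your argument is correct in substance and rests on exactly the two inputs the paper uses---the one-sided comparison $\lambda\omega_\lambda^M\leq\widehat\omega_{(\bm\phi)}$ from \eqref{eqn:comparison} and the smooth convergence of $\lambda g_\lambda^M$ to $\widehat g_{(\bm\phi)}$ away from $\{p_1,\dots,p_d\}$---but it packages them differently from the paper. The paper never touches geodesics: it observes that $K_\varepsilon=\mathbf{P}^1\setminus\bigcup_j B_{d_{\widehat g_{(\bm\phi)}}}(p_j,\varepsilon)$ is an $\varepsilon$-net of $\mathbf{P}^1$ in \emph{both} metrics (for $d_{\lambda g_\lambda^M}$ this is precisely where \eqref{eqn:comparison} enters, via $d_{\lambda g_\lambda^M}(p_j,p)\leq d_{\widehat g_{(\bm\phi)}}(p_j,p)=\varepsilon$ for $p\in\partial B_{d_{\widehat g_{(\bm\phi)}}}(p_j,\varepsilon)$), and then chains three Gromov--Hausdorff estimates through $(K_\varepsilon,d_{\lambda g_\lambda^M})$ and $(K_\varepsilon,d_{\widehat g_{(\bm\phi)}})$ by the triangle inequality. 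You instead exhibit the identity map as an explicit $\epsilon(\lambda)$-isometry, getting one distortion inequality for free from \eqref{eqn:comparison} and the other by surgering a minimizing $\lambda g_\lambda^M$-geodesic through the apices. Your version is more hands-on and yields slightly more information (a uniform two-sided bound $d_{\widehat g_{(\bm\phi)}}-\epsilon(\lambda)\leq d_{\lambda g_\lambda^M}\leq d_{\widehat g_{(\bm\phi)}}$ on all pairs of points, not merely GH closeness); the paper's version is shorter because the net formulation absorbs all the bookkeeping about what happens inside the excised balls. One side remark in your write-up is off: the $2\delta$ cost of the detour is measured in $\widehat g_{(\bm\phi)}$ and comes purely from the radial segments in the flat cone; the sandwich is not needed there, only for the easy direction and for discarding the $\lambda g_\lambda^M$-length spent inside the balls.

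One wrinkle you should patch in the surgery step: the intervals $[t_j^-,t_j^+]$ for different $j$ need not be disjoint. Disjointness of the $\delta$-balls does not prevent $\gamma$ from entering $B(p_1,\delta)$, then $B(p_2,\delta)$, then re-entering $B(p_1,\delta)$, in which case the intervals nest or overlap and the simultaneous replacement is not a well-defined curve. The fix is routine: process the balls one at a time in the order $\gamma$ first meets them, excising from the first entry into that ball to the \emph{last} exit from it before proceeding along the remaining parameter range; each excision has endpoints on the boundary of a single ball (so still costs $2\delta$ through the apex), at most $d$ excisions occur, and the retained arcs lie in $K_\delta$, so the final length estimate is unchanged.
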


\begin{proof}
	For any $\varepsilon>0$ small enough, the metric balls  $B_{d_{\widehat{g}_{(\bm\phi)}}}\left(p_j, \varepsilon\right)$ are open disjoint subsets for $j=1,2, \cdots, d$.  Denote $K_\varepsilon= \mathbf{P}^1\backslash \cup_{j=1}^d B_{d_{\widehat{g}_{(\bm\phi)}}}\left(p_j, \varepsilon\right)$. By the relation \eqref{eqn:comparison}, for any $p_j$ and $p\in \partial B_{d_{\widehat g_{(\bm\phi)}}}(p_j,\varepsilon)\subset \partial K_\varepsilon\subset K_\varepsilon$, 
	\[
	d_{\lambda\cdot g_\lambda^M}(p_j,p)\leq 
	d_{\widehat g_{(\bm\phi)}}(p_j,p)=\varepsilon
	\]
	which implies 
	\[
	d_{GH}\left( \left(\mathbf{P}^1, d_{\lambda\cdot g_\lambda^M} \right), \left(K_\varepsilon, d_{\lambda\cdot g_\lambda^M} \right) \right)\leq\varepsilon, \;\; 
	d_{GH}\left( \left(\mathbf{P}^1, d_{\widehat g_{(\bm\phi)}} \right), \left(K_\varepsilon, d_{\widehat g_{(\bm\phi)}} \right) \right)\leq\varepsilon.
	\]
	By the smooth convergence of $\lambda\cdot g_\lambda^M$ to $\widehat g_{(\bm\phi)}$ on $K_\varepsilon$ for fixed $\varepsilon$ as $\lambda\to 0$, for $\lambda$ sufficiently small it holds that
	\[
	d_{GH}\left( \left(K_\varepsilon, d_{\lambda\cdot g_\lambda^M} \right), \left(K_\varepsilon, \widehat g_{(\bm\phi)} \right)\right)
	<\varepsilon. 
	\]
	According to the triangle inequality of Gromov-Hausdorff distance, 
	\[
	d_{GH}\left( \left(\mathbf{P}^1, d_{\lambda\cdot g_\lambda^M} \right), \left(\mathbf{P}^1, d_{\widehat g_{(\bm\phi)}}\right)\right)<3\varepsilon
	\]
	for $\lambda$ sufficiently small. 
\end{proof}

\subsection{Preliminary bubbling analysis}

 The uniform lower bound of volume ratio for the rescaled metric \eqref{bound:rescaled-injectivity} implies the uniform lower bound on the volume ratio for the original family $\omega_\lambda^M$ on any fixed scale. Precisely speaking, take any sequence $\lambda_n\to 0$, and sequence of base point $q_n\in \mathbf{P}^1$, 
\begin{equation}
\frac{\Vol_{g_{\lambda_n}^M}\left(B_{g_{\lambda_n}^M}(q_n, s)  \right)}{s^2} \geq \kappa, \;\; \forall s\in (0,\lambda_n^{-\frac{1}{2}}\overline d].
\end{equation}

We can resort to the idea in \cite{FPY} considering the conformal rescaled sequence $k_n=e^{2\alpha\Phi_{\lambda_n}^M}g_{\lambda_n}^M$ which has uniform bound on the Riemannian curvature and covariant derivative of Riemannian curvature.  Moreover, the above volume ratio lower bound continues to hold (with a possibly smaller $\kappa'>0$). The theorem of Cheeger-Gromov-Taylor shows that $inj(k_n, q_n)$ is uniformly bounded from below. We can therefore take (sub-sequential) pointed Cheeger-Gromov limit: there exists a complete pointed Riemannian manifold $\left(X, k_\infty, q\right)$ such that 
\[
\left(\mathbf{P}^1, k_n, q_n\right)\longrightarrow 
\left(X, k_\infty, q\right), \;\; \text{ in }C^{2,\beta} \text{ Cheeger-Gromov sense}.
\]

Now we take $q_n$ to be one fixed zero $p_j$ of $\bm\phi$, then for any $r<\min_{j\neq i}d_{\widehat g_{(\bm\phi)}}(p_j, p_i)$, the volume of metric balls converges as $\lambda\to 0$, i.e. 
\[
\frac{\Vol_{\lambda \cdot g_\lambda^M}B_{\lambda\cdot g_\lambda^M}(p_j, r)}{\pi r^2}
\to 
\frac{\Vol_{g_{(\bm\phi)}}B_{g_{(\bm\phi)}}(p_j, r)}{\pi r^2}
=
\beta_j = 1-\frac{2n_j}{N}.
\]
For any fixed $s>0$ and $r$ as above, 

\[
\frac{\Vol_{g_\lambda^M}B_{g_\lambda^M}(p_j, s)}{\pi s^2}
=
\frac{\Vol_{\lambda \cdot g_\lambda^M}B_{\lambda\cdot g_\lambda^M}(p_j, \lambda^\frac{1}{2}s)}{\pi \left(\lambda^\frac{1}{2}s\right)^2}
\geq 
\frac{\Vol_{\lambda \cdot g_\lambda^M}B_{\lambda\cdot g_\lambda^M}(p_j, r)}{\pi r^2}
\]
as long as $\lambda^\frac{1}{2}s\leq r$ by the Relative Volume Comparison theorem. Taking limit on both sides as $\lambda\to 0$, we obtain the lower bound on the volume ratio for the limit metric $g_\infty$ on $X$:
\begin{equation}
\frac{\Vol_{g_\infty}B_{g_\infty}(q_j, s)}{\pi s^2}
\geq 
\beta_j
\end{equation}
for any $s>0$. Moreover, 
\[
\int_X S_{k_\infty}dvol_{k_\infty}\leq 4\pi
\]
by the nonnegativity of the Gaussian curvature and Gauss-Bonnet Theorem. Additionally, $S_{k_\infty}=\lim_{n\to +\infty} S_{g_{\lambda_n}^M}(p_j)\geq a\tau^2>0$ and therefore $k_\infty$ is not flat. It follows that $X$ is diffeomorphic to $\mathbf{R}^2$. Spelling out the Cheeger-Gromov convergence above, there exists a decomposition of $X$ into a nested sequence of relatively compact open subsets $X=\cup_{\ell=1}^{+\infty} \Omega_\ell$ with $q\in \Omega_\ell$ and diffeomorphism $F_\ell: \Omega_\ell\longrightarrow \mathbf{P}^1$ with $F_\ell(q)=p_j$, such that $F_\ell^*k_n$ converges to $k_\infty$ as tensors in $C^{2,\beta}$ sense on any compact subset of $X$. Using the equivalence of Hodge star operator and the compatible almost complex structure of the Riemannian metric on two dimensional manifold, we conclude that $F_\ell^*J_{\mathbf{P}^1}$ converges to a ``almost complex structure'' $J_\infty$ on $X$ (in the sense above), and $J_\infty$ has the regularity of $C^{2,\beta}$ and coincides with Hodge star operator of $k_\infty$. By the integrability of $C^{2,\beta}$ almost complex structure on a two dimensional manifold, $(X, J_\infty)$ is an one dimensional complex manifold which is simply connected and the pair $(k_\infty, J_\infty)$ is a K\"ahler structure. 

\begin{theorem}
	Let $\mathbb{D}\subset \mathbf{C}$ be the unit disc, there does not exists complete K\"ahler metric on $\mathbb{D}$ with nonnegative curvature. 
\end{theorem}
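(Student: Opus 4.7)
The plan is to argue by contradiction using Yau's Liouville theorem on bounded harmonic functions. Suppose $g$ is a complete K\"ahler metric on $\mathbb{D}$, equipped with its standard complex structure $J_{\mathrm{std}}$, and with nonnegative Gaussian curvature. In real dimension two the Ricci tensor satisfies $\mathrm{Ric}_g = K_g \cdot g$, so the hypothesis forces $\mathrm{Ric}_g \geq 0$.

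I would then invoke Yau's Liouville theorem, which states that on a complete Riemannian manifold with nonnegative Ricci curvature every bounded harmonic function is constant, and apply it to the real and imaginary parts of the coordinate $z : \mathbb{D} \to \mathbf{C}$. Since $z$ is $J_{\mathrm{std}}$-holomorphic, the functions $\Re(z)$ and $\Im(z)$ are harmonic for any metric in the conformal class determined by $J_{\mathrm{std}}$, and any K\"ahler metric on $(\mathbb{D}, J_{\mathrm{std}})$ lies in this conformal class. Hence $\Delta_g \Re(z) = \Delta_g \Im(z) = 0$, and these functions are bounded by $1$ but non-constant, giving the desired contradiction.

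The only delicate ingredient is the conformal invariance of harmonicity in real dimension two: under a conformal change $g' = e^{2\varphi} g$ one has $\Delta_{g'} = e^{-2\varphi} \Delta_g$, so the kernel of the Laplacian depends only on the conformal class and is therefore determined by $J_{\mathrm{std}}$ alone. An alternative route would bypass Yau's Liouville theorem entirely and invoke the Blanc-Fiala theorem: a complete Riemann surface with nonnegative Gaussian curvature is conformally parabolic, whereas $(\mathbb{D}, J_{\mathrm{std}})$ is conformally hyperbolic, yielding the same contradiction.
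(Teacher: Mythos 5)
Your proof is correct, but it takes a genuinely different route from the paper. The paper runs a Schwarz--lemma (Chern--Lu) type argument: it computes $-\tfrac{1}{2}\Delta_\omega \tr_\omega\eta = 2(\tr_\omega\eta)^2 + S_\omega\,\tr_\omega\eta + |\nabla^{1,0}_\omega\eta|^2$ for the Poincar\'e metric $\eta$, deduces $-\Delta_\omega\tr_\omega\eta \geq (\tr_\omega\eta)^2$ from $S_\omega\geq 0$, and then builds a barrier $W = w(d_{p_0})$ from an ODE blowing up at finite radius, using the Laplacian comparison theorem for $\mathrm{Ric}\geq 0$, to conclude $\tr_\omega\eta(p_0)\leq w_0$ for every $w_0>0$ and hence $\tr_\omega\eta\equiv 0$, a contradiction. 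You instead exploit the two-dimensionality head-on: since every K\"ahler (equivalently Hermitian) metric on $(\mathbb{D}, J_{\mathrm{std}})$ is conformal to the Euclidean one and harmonicity of functions is conformally invariant in real dimension two, the bounded non-constant functions $\Re(z)$, $\Im(z)$ are $g$-harmonic, contradicting Yau's Liouville theorem on complete manifolds with $\mathrm{Ric}\geq 0$ (or, equivalently, the Blanc--Fiala/Huber parabolicity of complete surfaces with $K\geq 0$ versus the hyperbolicity of $\mathbb{D}$). Both arguments hinge on exactly the same two hypotheses --- completeness and nonnegative Ricci curvature --- and your conformal-invariance step is correctly justified. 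What each buys: yours is shorter and essentially classical once Yau's theorem is quoted; the paper's is self-contained modulo the Laplacian comparison theorem and is the template that survives in higher dimensions, where conformal invariance of harmonicity is no longer available.
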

\begin{proof}
	Let $\eta = \frac{idz\wedge d\bar z}{\left( 1- |z|^2\right)^2}$ be the standard hyperbolic metric on $\mathbb{D}$, and $\omega$ be any other K\"ahler metric. A direct computation gives the formula 
	\[
	-\frac{1}{2}\Delta_\omega \tr_\omega\eta 
	= 
	2\left( \tr_\omega\eta\right)^2 + S_\omega \cdot \tr_\omega\eta 
	+ 
	\left|\nabla_\omega^{1,0}\eta\right|_{\omega,\eta}^2, 
	\]
	where $\Delta_\omega = \tr_\omega \left(-2i\partial\bar\partial\right)$ is the convention of Laplacian we adopted throughout this article. In the current situation, assume the existence of a K\"ahler metric $\omega$ which is complete and of nonngeative curvature. Since $S_\omega\geq 0$, on $\mathbb{D}$ there holds
	\begin{equation}
	\label{eqn:supercritical}
	-\Delta_\omega \tr_\omega\eta 
	\geq \left(\tr_\omega\eta\right)^2
	\end{equation}
	 Let $d_{p_0}(\cdot)$ be the distance function of the metric $g$ from the point $p_0\in \mathbb{D}$, then 
	\[
	-\Delta_\omega d_{p_0}\leq \frac{1}{d_{p_0}}
	\]
	according to the Laplacian Comparison Theorem for manifold with nonnegative Ricci curvature. Let $w_0>0$ be a real number and $w(x)$ be a real variable function such that 
	\[
	\left\{
	\begin{array}{l}
	w''(x) + \frac{1}{x}w(x) = w(x)^2, \;\; x\in [0,c_{w_0})\\
	w(0)=w_0, w'(0)=0, 
	\end{array}
	\right.
	\]
	where $0<c_{\omega_0}<+\infty$ is such that $[0,c_{w_0})$ is the maximal existence interval for this initial value problem. 
	Define $W(\cdot)=w\left( d_{p_0}(\cdot)\right)$ on $B_{d_{g}}(p_0, c_{w_0})$.  Then $-\Delta_\omega W = w''\left( d_{p_0}\right) + w'(d_{p_0})\cdot \left( - \Delta_\omega d_{p_0}\right)\leq w''(d_{p_0}) + \frac{1}{d_{p_0}}w'(d_{p_0})$, i.e. it satisfies the differential inequality 
	\begin{equation}
	\label{eqn:test}
	-\Delta_\omega W \leq W^2, \;\; \text{on } B_{d_{g}}(p_0, c_{w_0}), 
	\end{equation}
	and $W$ approaches $+\infty$ near $\partial B_{d_{g}}(p_0, c_{w_0})$. The inequality \eqref{eqn:supercritical} and \eqref{eqn:test} implies that 
	\[
	-\Delta_\omega \left( \tr_\omega \eta - W\right) \geq \left( \tr_\omega \eta -W \right)\left( \tr_\omega \eta +W\right)
	\]
	on $B_{d_{g}}(p_0, c_{w_0})$. Since $\tr_\omega \eta - W$ approaches $-\infty$ near the boundary of the metric ball, it must achieves its maximal value at some interior point $p$ of the ball. In particular, $\left( \tr_\omega\eta - W\right)(p)\leq 0$ and therefore 
	\[
	\tr_\omega\eta(p_0)\leq W(p_0)=w_0.
	\]
	Since $w_0>0$ is arbitrarily chosen a priori, we conclude $\tr_\omega\eta(p_0)=0$, which is clearly a contradiction. This contradiction means that a complete K\"ahler metric with nonnegative curvature cannot exists on $\mathbb{D}$. 
\end{proof}

This theorem implies that $\left(X, J_\infty\right)$ cannot be biholomorphic to $\mathbb{D}$, therefore must be biholomorphic to $\mathbf{C}$ by Riemann Mapping Theorem.

The equation \cite[Equation (4.25)]{FPY} about $\Phi_n$ pulls back to $X$ with uniformly bounded right hand side and a family of Laplacian operator whose coefficients converges to that of $k_\infty$ in $C^{2,\beta}$ sense. This implies a $W^{2,p}_{loc}$ and  $C^{1,\gamma}_{loc}$ convergence to a limit function $\Phi_\infty'$ on $X$. Moreover $0\leq \Phi'_\infty\leq \tau$, and satisfies the finite integral condition 
\begin{equation}
\int_X\left(\tau-\Phi_\infty'\right)dvol_{k_\infty} 
\leq 
4\pi N. 
\end{equation}
It also satisfies the equation
\[
\Delta_{k_\infty}\Phi_\infty' = - \frac{\left| d\Phi_\infty'\right|_{k_\infty}^2}{\Phi_\infty'}+ \Phi_\infty'\left(\tau-\Phi_\infty'\right) e^{-2\alpha\Phi_\infty'}. 
\]
One of the remaining difficulties is to show that $\Phi_\infty'$ is positive on $X\backslash\{q\}$ as in \cite{FPY}, i.e. to rule out the case $\Phi_n$ degenerates to $0$ on large region. Our next goal is to show that the asymptotic volume ratio should be equal to $\beta_j$, and this bubbling process is modeled on the family of solution of Chen-Hastings-McLeod-Yang.

\section{Einstein-Bogomol'nyi metrics with symmetry}
The Einstein-Bogomol'nyi equation has been investigated a lot in the mathematical physics literature, and some particular solutions with symmetry were constructed using ODE \cite{Linet, CHMcY, Yang4}. 

\subsection{Limiting behavior of solution with $S^1$ symmetry on $\mathbf{P}^1$}
The main goal of this subsection is to  study the limit behavior of Einstein-Bogomol'nyi metrics on $\mathbf{P}^1$ when the Higgs field is strictly polystable. Very interestingly, in the large volume limit, we recover the asymptotically cylindrical Einstein-Bogomol'nyi metric discovered by Linet \cite{Linet} and Yang \cite{Yang}. 

\subsubsection{The solution of Yang on $\mathbf{P}^1$}
\label{section:regularity}
Choose a strictly polystable divisor $D = \tfrac{N}{2} \cdot 0 + \tfrac{N}{2} \cdot \infty:=N'\cdot 0 + N'\cdot \infty=N'\cdot p_1+N'\cdot p_2$ whose defining section $\bm\phi=z_0^{N'}z_1^{N'}$ is a holomorphic section on $L=\mathcal{O}_{\mathbf{P}^1}(2N')$. Let $u=\log |\bm\phi|_h^2$, then in the cylindrical coordinate $(t,\theta)\in \mathbf{R}\times S^1$ of $\mathbf{C}^*\subset \mathbf{P}^1$ (the transition function between the cylindrical coordinate and the usual polar coordinate $(r,\theta)$ of $\mathbf{C}^*$ is $(r,\theta)=\left(e^t,\theta\right)$, and if needed we will also use the complex coordinate $z=re^{i\theta}$), the equation \eqref{eqn:HS-lambda} (assuming $\tau=1$) about $f$ is translated to 
\begin{equation}
\frac{\partial^2 u}{\partial t^2} + \frac{\partial^2 u}{\partial \theta^2}
= 
\frac{1}{\lambda} e^{2\alpha\left( u-e^u\right)}\left( e^u-1\right)
\end{equation}
with two asymptotic condition $\lim_{t\to +\infty}\frac{\partial u}{\partial t}=-2N'$ and $\lim_{t\to -\infty} \frac{\partial u}{\partial t}=2N'$.

 Yang \cite{Yang4} studied the existence of solutions to this equation assuming the $S^1$-symmetry, i.e. reduced the equation to the ODE initial value problem
\begin{equation}
\label{eqn:ODE}
\left\{
\begin{array}{cc}
u_{tt}
=
\frac{1}{\lambda}e^{2\alpha(u-e^u)}(e^u-1), 
& -\infty<t<+\infty\\
u(0) 
= -\mathfrak{b}, u_t(0)=0
\end{array}
\right.
\end{equation}
satisfying two asymptotic boundary conditions 
\[
\lim_{t\to + \infty} u_t(t)= - 2N', \qquad 
\lim_{t\to -\infty} u_t(t)  = 2N'. 
\]
Using the shooting method of ODE, he showed that for each $\mathfrak{b}>0$ there is a unique parameter
\begin{equation}
\lambda_{\mathfrak{b}} = \frac{1}{2N' e^{2\alpha\left( \mathfrak{b}+ e^{-\mathfrak{b}}\right)}}
\end{equation}
such that the above equation has a global solution $u^{\mathfrak{b}}$ with the prescribed asymptotic boundary conditions on the two ends.\footnote{We should remark that $\pi G$ in Yang's notation is equal to the coupling constant $\alpha$ in our paper, and obviously the symbol $\lambda$ in \cite[Equation (3.1)]{Yang4} is $\frac{1}{\lambda}$ in our notation. Moreover, there is missing coefficient in the exponential shoulder in \cite[Equation (6.7)]{Yang4}}. Indeed, by multiplying both sides of \eqref{eqn:ODE} and integrating from $-\infty$ to $t$, we obtain an explicit formula of $u^\mathfrak{b}_t$ in terms of $u^\mathfrak{b}(t)$ (c.f. also \cite[Equation (6.6)]{Yang4}) 
\begin{equation}
\label{est:gradient}
u^\mathfrak{b}_t(t)^2 
= 
4N'^2 - \frac{1}{\lambda_\mathfrak{b} \alpha} e^{2\alpha\left(u^\mathfrak{b}(t)-e^{u^\mathfrak{b}(t)}\right)}. 
\end{equation}
For any $\delta>0$, there exists $T^\mathfrak{b}_\delta>0$ sufficiently large such that for any $t<-T^\mathfrak{b}_\delta$, 
\[
u^\mathfrak{b}_t(t)>2N'-\delta, \;\;
2N't-\mathfrak{b}\leq u^\mathfrak{b}(t)<\left(2N'-\delta\right)t.
\]
The formula \eqref{est:gradient} reads as
\begin{equation*}
2N'-u^\mathfrak{b}_t(t)
=
\frac{1}{\alpha\lambda_\mathfrak{b} \left( 2N'+u^\mathfrak{b}_t(t)\right)} e^{2\alpha\left( u^\mathfrak{b}(t) -e^{u^\mathfrak{b}(t)}\right)}, 
\end{equation*}
which implies that for $t<-T_\delta$, 
\begin{equation}
\label{est:exponentialdecay}
0
< 2N'-u^\mathfrak{b}_t(t)
\leq 
\frac{1}{\alpha\lambda_{\mathfrak{b}}\left( 4N'-\delta\right)} e^{2\alpha \left(2N'-\delta\right)t}, 
\end{equation}
i.e. $u^\mathfrak{b}_t(t)$ converges to $2N'$ as $t\to -\infty$ at an exponential rate. Similarly, using the fact that $u^\mathfrak{b}_t(t)\to -2N'$ (as $t\to +\infty$) , we can show $u^\mathfrak{b}_t(t)$ converges to $-2N'$ as $t\to +\infty$ at an exponential rate. 

Define $v^\mathfrak{b}(r,\theta)= u^\mathfrak{b}\left(\log r\right) - 2N'\log r$, then it satisfies 
\begin{equation}
\label{eqn:remainder}
\Delta_{g_{euc}} v^\mathfrak{b}
=
\frac{1}{\lambda_\mathfrak{b}} e^{2\alpha \left( v^\mathfrak{b}-r^{2N'}e^{v^\mathfrak{b}}\right)} \left( 1 - r^{2N'}e^{v^\mathfrak{b}}\right) 
\end{equation}
where we use the convention $\Delta_{g_{euc}} v=-\left( v_{rr} + \frac{1}{r}v_r + \frac{1}{r^2} v_{\theta\theta} \right)$. Moreover, using \eqref{est:exponentialdecay} we get 
\[
-\mathfrak{b}\leq 
u^\mathfrak{b}(t) - 2N't 
\leq 
2N'T^\mathfrak{b}_\delta  + u^\mathfrak{b}\left(-T^\mathfrak{b}_\delta\right) 
+ 
\frac{1}{4\alpha^2 \lambda_\mathfrak{b}\left(4N'-\delta\right)\left(2N'-\delta\right)}\left( 
e^{-2\alpha(2N'-\delta)T^\mathfrak{b}_\delta} - e^{2\alpha(2N'-\delta)t}
\right)
\]
for $t<-T^\mathfrak{b}_\delta$. In particular, $u^\mathfrak{b}(t)-2N't$ is bounded on $(-\infty, -T^\mathfrak{b}_\delta]$, and the RHS of \eqref{eqn:remainder} is bounded on the unit disc $\mathbb{D}\subset \mathbf{C}$. In the meantime, 
\begin{equation}
\left| \frac{\partial v^\mathfrak{b}}{\partial r}\right|
=
\left| \frac{u^\mathfrak{b}_t\left(\log r\right) - 2N'}{r}\right|
\leq 
\frac{1}{\alpha\lambda_\mathfrak{b}(4N'-\delta)}r^{2\alpha(2N'-\delta) -1}.
\end{equation}
The above arguments show that $v^\mathfrak{b}\in W^{1,2}\left(\mathbb{D}\right)\cap C^\infty\left(\mathbb{D}^*\right)$ satisfies $\Delta_{g_{euc}} v^\mathfrak{b}\in L^\infty(\mathbb{D})$. The standard elliptic regularity theory shows that $v^\mathfrak{b}\in W^{2,p}\left(\mathbb{D}\right)$ for any $p\geq 1$. This in particular implies that $v^\mathfrak{b}\in C^{1,\beta}\left(\mathbb{D}\right)$ for any $\beta\in (0,1)$. Standard bootstrapping argument to \eqref{eqn:remainder} implies that $v^\mathfrak{b}\in C^\infty\left(\mathbb{D}\right)$. 

Geometrically, we define $h_\mathfrak{b}(z)=e^{v^\mathfrak{b}(z)}$ under the standard trivialization of  the line bundle $\mathcal{O}_{\mathbf{P}^1}(2N')$ over the patch $\mathbf{C}$, i.e. define $\left|\bm\phi\right|_{h_\mathfrak{b}}^2 = \left|z^{N'}\right|^2 e^{v^\mathfrak{b}(z)}$, we obtain a smooth Hermitian metric on the line bundle on $\mathcal{O}_{\mathbf{P}^1}(2N')$, still denoted by $h_\mathfrak{b}$ . Moreover, we define a smooth Riemannian metric
\begin{equation}
\begin{split}
g_{\mathfrak{b}}
& =
\frac{1}{\lambda_{\mathfrak{b}}} e^{2\alpha \left( u^{\mathfrak{b}}- e^{u^{\mathfrak{b}}}\right)} r^{-2} g_{euc}
=
\frac{1}{\lambda_{\mathfrak{b}}} e^{2\alpha \left( u^{\mathfrak{b}}- e^{u^{\mathfrak{b}}}\right)}\left( dt^2+d\theta^2\right)
= 
\frac{1}{\lambda_\mathfrak{b}} e^{2\alpha v^\mathfrak{b}(z)-2\alpha \left|z\right|^{2N'}e^{v^\mathfrak{b}(z)}}g_{euc}
\end{split}
\end{equation}
on $\mathbf{C}\subset \mathbf{P}^1$ which can be verified to extends as a smooth metric on another patch $\mathbf{C}=\mathbf{P}^1\backslash \{0\}$. The pair $(g_{\mathfrak{b}}, h_{\mathfrak{b}})$ is a $S^1$-symmetric solution to the Einstein-Bogomol'nyi equations with $\tau=1$. 

Now we want to look at the limit behavior of the $\left( g_\mathfrak{b},h_\mathfrak{b}\right)$ as $\mathfrak{b}\to 0+$ and $\mathfrak{b}\to +\infty$.

\subsubsection{Limit as $\mathfrak{b}\to +\infty$}
It is evident that $u^\mathfrak{b}$ goes to $-\infty$ uniformly on $(-\infty,+\infty)$, therefore it is useful to look at the ``remainder'' function. In this perspective, let $w^\mathfrak{b}=u^\mathfrak{b}+\mathfrak{b}$, then $w^\mathfrak{b}$ satisfies 
\begin{equation}
\begin{split}
\frac{d^2}{dt^2}w^\mathfrak{b}
& =
2N' e^{2\alpha e^{-\mathfrak{b}}\left(1-e^{w^\mathfrak{b}}\right)+2\alpha w^\mathfrak{b}}\left( e^{-\mathfrak{b}} e^{w^\mathfrak{b}}-1\right);\\
w^\mathfrak{b}(0)
& = \frac{d}{dt}|_{t=0}w^\mathfrak{b}=0. 
\end{split}
\end{equation}
Using the estimates $-2N'|t|\leq w^\mathfrak{b}(t)\leq 0$, $-2N'\leq \frac{d}{dt}w^\mathfrak{b}\leq 2N'$ we obtain $-2N'\leq \frac{d^2}{dt^2}w^\mathfrak{b}\leq 0$ on $\mathbf{R}$. It can then be deduced inductively that there exists $C_i>0$ ($i=2,3,\cdots$) such that $
\left| \frac{d^i}{dt^i}w^\mathfrak{b}\right|\leq C_i
$
on $\mathbf{R}$. By Arzela-Ascoli's Theorem, for any sequence $\mathfrak{b}_n\to +\infty$, there exists a subsequence, still denoted by $\mathfrak{b}_n$ such that $w^{\mathfrak{b}_n}$ converges to $w^{(\infty)}$ in $C^\infty_{loc}$ sense on $\mathbf{R}$. The limit function satisfies 
\begin{equation}
\begin{split}
\frac{d^2}{dt^2}w^{(\infty)}
& =
- 2N' e^{2\alpha w^{(\infty)}};\\
w^{(\infty)}(0)
& = \frac{d}{dt}|_{t=0}w^{(\infty)}=0. 
\end{split}
\end{equation}
Elementary integration gives an explicit formula $w^{(\infty)}(t)= N'\log \frac{1}{\cosh^2 t}$. The curvature of the Hermitian metric $iF_{h_{\mathfrak{b}_n}}=-i\partial\bar\partial u^{\mathfrak{b}_n}=-i\partial\bar\partial w^{\mathfrak{b}_n}$ converges to $-i\partial\bar\partial w^{(\infty)}=-\frac{1}{2}\frac{d^2}{dt^2}w^{(\infty)}dt\wedge d\theta = \frac{N'}{\cosh^2 t}dt\wedge d\theta= 2N'\omega_0$ in the similar sense as above.  Similarly, the Riemannian metric $g_{\mathfrak{b}_n}=2N' e^{2\alpha e^{-{\mathfrak{b}_n}}}(1-e^{w^{\mathfrak{b}_n}}) +2\alpha w^{\mathfrak{b}_n}\left(dt^2+d\theta^2\right)$ converges to $2N' e^{2\alpha w^{(\infty)}}\left(dt^2+d\theta^2\right)=4N'\omega_0$ in $C^\infty_{loc}\left(\mathbf{P}^1\backslash\{p_1,p_2\}\right)$ sense. Since the limit is unique, that means the whole family converges. If we denote $\widetilde h_\mathfrak{b}=h_\mathfrak{b}e^\mathfrak{b}$, then the ``rescaled'' state function $\widetilde \Phi_\mathfrak{b}= \left|\bm\phi\right|_{\widetilde h_\mathfrak{b}}^2= e^{w^\mathfrak{b}}$ converges to the limit $e^{w^{(\infty)}}= \frac{1}{\cosh^{2N'}t} = {\left|z^{N'}\right|^2}/{\left( \frac{1 + \left|z\right|^2}{2}\right)^{2N'}}$.

This phenomenon can be summarized as the following, which explicitly verifies partially Theorem \ref{thm:dissolving} in the strictly polystable case:

``\emph{ As $\mathfrak{b}\to +\infty$ for the family of Einstein-Bogomol'nyi metrics $\left( g_\mathfrak{b}, h_\mathfrak{b}\right)$ with Higgs field $\bm\phi=z_0^{N'}z_1^{N'}$, the metric $g_\mathfrak{b}$ converges to the round metric on $\mathbf{P}^1$, the state function $|\bm\phi|_{h_\mathfrak{b}}^2$ converges to $0$ uniformly and the ``rescaled'' Hermitian metric $\left( \max_{\mathbf{P}^1} \left|\bm\phi\right|_{h_\mathfrak{b}}^2 \right)^{-1}\cdot h_\mathfrak{b}$ converges to the constant curvature metric on the line bundle $\mathcal{O}_{\mathbf{P}^1}(2N')$.}"

\subsubsection{Limit as $\mathfrak{b}\to 0+$}
\label{section:cylindrical}
It is shown in \cite{FPY} as $\mathfrak{b}\to 0^+$,  $\Vol_{g_\mathbf{b}}\to +\infty$. Similar to it, using the estimate $-\mathfrak{b}
\geqslant 
u^\mathfrak{b}(t)
\geqslant -\mathfrak{b} - N |t|\;\; \left( \forall t\in \mathbf{R}\right)$ and the convergence of $u^\mathfrak{b}$, we conclude $\text{diam}\left(\mathbf{P}^1,g_\mathfrak{b}\right)\to +\infty$ since
for any fixed $T>0$, 
\begin{align*}
\liminf_{\mathfrak{b}\to 0+}\text{diam}(\mathbf{P}^1, g_\mathfrak{b})
& \geq 
\liminf_{\mathfrak{b}\to 0+} d_{g_\mathfrak{b}}(p_1,p_2)\\
& =
\liminf_{\mathfrak{b}\to 0+} \frac{2}{\sqrt{\lambda_\mathfrak{b}}} \int_0^T e^{\alpha \left( u^\mathfrak{b}(t)-e^{u^\mathfrak{b}(t)}\right)} dt\\
& =
\frac{2}{\sqrt{\lambda_\mathfrak{b}}}e^{-\alpha}T.
\end{align*}

Because of the unbounded diameter for the sequence of metrics, we need to look at pointed convergence. Let us first choose the base point $q_n=p$ to be a fixed point on the central equation circle.  The length of the particular central equator circle $(t=0)$ is $\frac{1}{\sqrt{\lambda_\mathfrak{b}}}e^{\alpha\left( -\mathfrak{b}-e^{-\mathfrak{b}}\right)}\cdot 2\pi$ which converges to $2\pi e^{-\alpha}\sqrt{Ne}$. Actually, by the uniform $C^k$ convergence of $u^\mathfrak{b}$ to constant function $0$ on any fixed interval $[-T,T]$, the family converges in (pointed) Cheeger-Gromov sense to the flat metric $g_\infty= Ne^{1-2\alpha} \left(dt^2+d\theta^2\right)$ on the cylinder $X=\mathbf{R}\times S^1=\mathbf{C}^*$. Actually, simply taking $F_\mathfrak{b}: X_\mathfrak{b}=\left[-\frac{1}{\mathfrak{b}},\frac{1}{\mathfrak{b}}\right]\times S^1\longrightarrow \mathbf{P}^1=\mathbf{C}\cup\{\infty\}$ sending $\left(t,\theta\right)$ to $\left(r,\theta\right)=\left( e^t,\theta\right)$, we can see clearly $F_\mathfrak{b}^*g_\mathfrak{b}$ converges to the above flat metric $g_\infty$ on any compact subset $\left[-T,T\right]\times S^1\subset X$. It is a very special situation that all the local diffeomorphism $F_\mathfrak{b}$ in the Cheeger-Gromov convergence patches together to one diffeomorphism $F:X\longrightarrow \mathbf{P}^1$. Along the Cheeger-Gromov convergence, $F_\mathfrak{b}^*J_{\mathbf{P}^1}=J_{\mathbf{C}^*}$ converges to $J_{\mathbf{C}^*}$ which makes $g_\infty$ a K\"ahler metric. Since $F_\mathfrak{b}$ is holomorphic, $F^*L$ can be naturally equipped with a holomorphic structure and there exists a holomorphic bundle map $\widetilde F: F^*L\longrightarrow L$ lifting $F$. The Hermitian metric $h_{\mathfrak{b}}= \frac{e^{u^\mathfrak{b}}}{\left| z^{N'} \right|^2}$ is pulled back to $\widetilde F^*h_{\mathfrak{b}}= e^{u^\mathfrak{b}(t)-2N't}$, and converges to $h_\infty = e^{-2N't}$ in smooth sense on any compact subset of $X$. The holomorphic section $\bm\phi$ of $L$ is pulled back to $\widetilde F^*\bm\phi=\left( e^{t+i\theta}\right)^{N'}$ whose square norm using the pulled back Hermitian metric is $\left| \widetilde F^*\bm\phi\right|_{\widetilde F^*h_\mathfrak{b}}^2= \left|\left( e^{t+i\theta}\right)^{N'} \right|^2 e^{u^\mathfrak{b}(t)-2N't}=e^{u^\mathfrak{b}(t)}$ converges to the constant function $1$ on any compact subset $\left[-T, T\right]\times S^1\subset X$.

It should be noticed that the Chern connection of the Hermitian metric $h_\infty$ on $\widetilde F^*L$ over $X$ is flat, but with connection form given by $A_{h_\infty} = - N'\left( dt + id\theta\right)=-N'\frac{dw}{w}$ in the cylindrical coordinate $(t,\theta)\in \mathbf{R}\times S^1$ and complex coordinate $w\in \mathbf{C}^*$ respectively. If we forget $g_\infty$, i.e. simply treat the complex geometry $(L_\infty, A_{h_\infty})$, the underline complex manifold $\mathbf{C}^*$ can be naturally compactified as $\mathbf{C}^*\cup\{0,\infty\}=\mathbf{P}^1$ but $A_{h_\infty}$ is extended as a singular connection $\overline{A_{h_\infty}}$ of Dirac monopole type on the nontrivially extended line bundle $\overline{L_\infty}$.

Next we take the base points to be $q_n=p_1=0\in \mathbf{P}^1$, the pointed Cheeger-Gromov limit is expected to be an asymptotically cylindrical metric. Consider the following ODE potentially arising from a ``Cheeger-Gromov'' limit of the above compact case when the volume tends to infinity:
\begin{equation}
\label{eqn:bubbleODE}
\left\{
\begin{array}{l}
u_{tt}
=
\frac{1}{\lambda}e^{2\alpha(u-e^u)}(e^u-1), 
 -\infty<t<+\infty\\
u(0) 
= -\mathfrak{c}, \;\;
\lim_{t\to -\infty} u_t(t)=2N', \;\; \lim_{t\to +\infty} u_t(t)=0.
\end{array}
\right.
\end{equation}
The difference in the initial value condition and asymptotic behavior is resulted from the fact that \eqref{eqn:ODE} describes Einstein-Bogomol'nyi equation on $S^2$ while the current ODE is aimed to describe such equation on $\mathbf{C}$ which is noncompact. The conditions in \eqref{eqn:bubbleODE} is a bit nonstandard since they are mixing initial value and boundary value. It is not straightforward to see what kind of initial conditions on $u(0), u_t(0)$ to guarantee the existence of solution satisfying the boundary condition $\lim_{t\to -\infty} u_t(t)=2N'$. On the other hand, if we replace the ``initial condition'' $u(0)=-\mathfrak{c}$ by another asymptotic boundary condition $\lim_{t\to +\infty} u_t(t)=0$, then the equation loses the uniqueness since it is autonomous.

Next, we explore the \emph{scaling symmetry} of the system on $\mathbf{C}$, or \emph{translation symmetry} on $\mathbf{R}\times S^1$ pertained to \eqref{eqn:bubbleODE}.  Let $\iota_{\epsilon}: \mathbf{C}\longrightarrow \mathbf{C}$ be the dilation map $w\mapsto \epsilon w=z$, which in the $(s,\theta)$ coordinate is given as $\gamma_\epsilon:\mathbf{R}\times S^1\longrightarrow \mathbf{R}\times S^1$ with $(s,\theta)\mapsto (s+\log \epsilon,\theta)=(t,\theta)$. Take any solution $u(t)$ to \eqref{eqn:bubbleODE}, the function $\gamma_\epsilon^*u(s)=u(s+\log \epsilon)$ is also solution to the ODE, but with initial condition $\gamma_\epsilon^*u(0)=u\left(\log\epsilon\right)$ and boundary condition $\lim_{s\to -\infty}\left( \gamma_\epsilon^*u\right)_s(s)=2N'$. 

Fix any $\mathfrak{c}>0$, for any $\mathfrak{b}\in \left(0,\mathfrak{c}\right)$, there is a unique point $t_{\mathfrak{b},\mathfrak{c}}\in \left(-\infty,0\right)$ such that $u^\mathfrak{b}(t_{\mathfrak{b},\mathfrak{c}})=-\mathfrak{c}$. By the autonomous property of the equation satisfied by $u^\mathfrak{b}$, we can ``translate'' $u^\mathfrak{b}$ to another solution of the ODE in \eqref{eqn:bubbleODE} with initial condition $-\mathfrak{c}$, i.e. we consider the function $\gamma_{e^{t_{\mathfrak{b},\mathfrak{c}}}}^*u^\mathfrak{b}(\cdot)=u^\mathfrak{b}\left(\cdot + t_{\mathfrak{b},\mathfrak{c}}\right)$ which is a solution to 

\begin{equation}
\label{eqn:ODEtranslate}
\left\{
\begin{array}{l}
u_{tt}
=
\frac{1}{\lambda_\mathfrak{b}}e^{2\alpha(u-e^u)}(e^u-1), \;\;
 -\infty<t<+\infty,\\
u(0) 
= -\mathfrak{c}, \;\; u_t'(0)>0.
\end{array}
\right.
\end{equation}
By the uniform convergence of $u^\mathfrak{b}$ on $\left[-T,T\right]$ to zero function for any fixed $T$ as $\mathfrak{b}\to 0+$, we know that $\lim_{\mathfrak{b}\to 0^+} t_{\mathfrak{b},\mathfrak{c}}=-\infty$. Fix any $T>0$, for $\mathfrak{b}$ small enough, $-t_{\mathfrak{b},\mathfrak{c}}>T$ and there exists $C_T>0$ such that 
\begin{itemize}
	\item  $\gamma_{e^{t_{\mathfrak{b},\mathfrak{c}}}}^*u^\mathfrak{b}|_{[-T,T]}$ is increasing;
	\item 
	\begin{equation}
		\label{est:ufunction}
	 -2N'T\leq \gamma_{e^{t_{\mathfrak{b},\mathfrak{c}}}}^*u^\mathfrak{b}\leq 0,\;\;  
		\left| \frac{d^i}{dt^i} \gamma_{e^{t_{\mathfrak{b},\mathfrak{c}}}}^*u^\mathfrak{b}\right|\leq C_T, \; i=1,2,3. 
		\end{equation}
	
\end{itemize}
By Arzela-Ascoli's Theorem, as $\mathfrak{b}\to 0$, the family has a subsequence that converges to a solution $u_{(\infty)}$ to \eqref{eqn:ODEtranslate} on $[-T,T]$ with parameter $\lambda_0$. By a standard diagonal argument (applied to a sequence $T_j\to +\infty$ ), we can assume $u_{(\infty)}$ is a solution defined on $\left(-\infty,+\infty\right)$. Moreover, $u_{(\infty)}$ is increasing and concave on $\mathbf{R}$ with $u_{(\infty)}(0)=-\mathfrak{c}$. If $u_{(\infty)}'(0)=0$, then $u_{(\infty)}'(t)\equiv 0$ on $[0,+\infty)$ and $u_{(\infty)}\equiv -\mathfrak{c}$ which is clearly a contradiction since the constant function is not a solution on $[0,+\infty)$. Similarly, if $u_{(\infty)}'(0)=2N'$ then $u_{(\infty)}'(t)\equiv 2N'$ on $(-\infty, 0]$ and $u_{(\infty)}''(t)\equiv 0$ which is clearly also a contradiction to the equation \eqref{eqn:ODEtranslate}. Therefore, we have 
\[
0<u_{(\infty)}'(t)<2N', \;\; t\in \mathbf{R}. 
\]
Similarly to the above argument, we actually can show that 
\[
u_{(\infty)}(t)<0, \;\; \lim_{t\to +\infty}u_{(\infty)}(t)=0,\;\; \lim_{t\to +\infty} u_{(\infty)}'(t)=0. 
\]
One property that is still missing in order $u_{(\infty)}$ defines a solution to the equation \eqref{eqn:bubbleODE} is that $$\lim_{t\to -\infty} u_{(\infty)}'(t)=2N'.$$

Another observation we made is that 
\[
2\pi N
=\frac{1}{2}\int_{\mathbf{P}^1} \left(1-|\bm\phi|_{h_\mathfrak{b}}^2\right)\omega_\mathfrak{b} 
\geq 
\frac{1}{2}\Vol_{g_\mathfrak{b}}  \left( { \left|\bm\phi\right|_{h_\mathfrak{b}}^2 \leq e^{-\mathfrak{c}}} \right)\cdot \left( 1- e^{-\mathfrak{c}}\right)
= \left(1-e^{-\mathfrak{c}}\right) \Vol_{g_\mathfrak{b}}\left( (-\infty, t_{\mathfrak{b},\mathfrak{c}}]\times S^1 \right).
\]
The volume of the region (which is a geodesic ball centered at $p_1=0$) is uniformly bounded and therefore its radius is also uniformly bounded by the linear volume growth estimate for metric with nonnegative curvature provided the family of metric balls $B_{g_{\mathfrak{b}}}\left(p_1, \delta\right)$ has uniform lower bound on their volume. To this end, we need to look at the family of metrics $g_\mathfrak{b}$ in a more intrinsic coordinate. Considering the $S^1$ invariance, the \emph{normal coordinate} $\mathfrak{r}$ based at the fixed point $p_1$ is a good choice. Recall that the notation $\Phi_\mathfrak{b}=\left| \bm\phi\right|_{h_\mathfrak{b}}^2$ is the \emph{state function}. The conformally rescaled metric $k_\mathfrak{b}=e^{2\alpha \Phi_\mathfrak{b}}g_\mathfrak{b}= \frac{1}{\lambda_\mathfrak{b}}e^{2\alpha u^\mathfrak{b}}\left(dt^2+d\theta^2\right)$ is of the form $d\mathfrak{r}^2 +\eta(\mathfrak{r})^2d\theta^2$ with $\eta(0)=0,\eta'(0)=1$ and $\eta(\mathfrak{r})>0$ for $\mathfrak{r}\neq 0$. The scalar curvature and covariant derivative of the scalar curvature
\[
S= -2\frac{\eta''}{\eta}, \;\; |\nabla S| = 2 \left|\left( \frac{\eta''}{\eta}\right)'\right|
\]
are shown bounded in \cite[Lemma 4.9]{FPY} by the constant $K=\max\{\alpha, \sqrt{\frac{3\alpha}{2}} \left(2\alpha+1\right)\}$. Then we have 
\begin{equation}
\label{eqn:bounds}
\begin{split}
-\frac{K}{2}\eta
& \leq \eta''\leq 0,\\
\left| \eta^{'''}\right|
& \leq \frac{K}{2}\left( \eta + \left|\eta'\right|\right). 
\end{split}
\end{equation}
In the current situation, $\eta_\mathfrak{b}= \frac{1}{\sqrt{\lambda_{\mathfrak{b}}}} e^{\alpha u^\mathfrak{b}}\leq \frac{1}{\sqrt{\lambda_0}}$ on its domain, and $0<\eta'_\mathfrak{b}(\mathfrak{r})\leq 1$ for $\mathfrak{r}\in \left(0, d_\mathfrak{b}\right)$ where $d_\mathfrak{b}=\int_{-\infty}^0 \frac{1}{\sqrt{\lambda_b}} e^{\alpha u^\mathfrak{b}(t)}dt$ is the distance from $p_1$ to the central equator of $\mathbf{P}^1$ under the metric $k_\mathfrak{b}$. The bounds in \eqref{eqn:bounds} implies that for the family of functions $\eta_\mathfrak{b_i}=\frac{1}{\sqrt{\lambda_{\mathfrak{b}_i}}} e^{\alpha u_{\mathfrak{b}_i}}$ where $\{ \mathfrak{b}_i\}_{i=1,2,\cdots}\subset \left(0,\mathfrak{c}\right)$ is a sequence converges to $0$, we can take a subsequence, still denoted by $\mathfrak{b}_i$, such that $\eta_{\mathfrak{b}_i}\longrightarrow \eta_{\mathfrak{0}}$ in $C^{2,\beta}$ sense (for any $\beta\in (0,1)$) on the interval $\mathfrak{r}\in \left[0,\mathfrak{r}_*\right]$ for any $\mathfrak{r}_*>0$ (using the fact $d_{\mathfrak{b}_i}\to +\infty$ established at the beginning of this subsection). By taking a sequence of $\mathfrak{r}_*$ diverging to $+\infty$ and using diagonal argument we can assume this sequence $\eta_{\mathfrak{b}_i}$ converges on $[0,+\infty)$ to $\eta_{\mathfrak{0}}$. A consequence of such convergence is that $\eta_\mathfrak{0}(0)=0, \eta_{\mathfrak{0}}'(0)=1$ and $\eta'_{\mathfrak{0}}(0)\geq 0$ on $[0,+\infty)$. There exists $\delta_0>0$ such that $\eta_{\mathfrak{0}}(\mathfrak{r})\geq \frac{1}{2}\mathfrak{r}$ for $\mathfrak{r}\in (0,\delta_0)$ and therefore $\frac{1}{\sqrt{\lambda_0}}\geq \eta_{\mathfrak{0}}(\mathfrak{r})\geq \frac{\delta_0}{2}$ for any $\mathfrak{r}\in \left[\delta,+\infty\right)$. 

For this subsequence, $k_{\mathfrak{b}_i}=e^{2\alpha\Phi_{\mathfrak{b}_i}}g_{\mathfrak{b}_i}=d\mathfrak{r}^2+\eta_{\mathfrak{b}_i}(\mathfrak{r})^2 d\theta^2$ converges in $C^{2,\beta}$ sense to a complete $S^1$ invariant Riemannian metric $k_{\mathfrak{0}} = d\mathfrak{r}^2 + \eta_{\mathfrak{0}}(\mathfrak{r})^2d\theta^2$ on $\mathbf{R}^2$ whose curvature satisfies $0\leq S_{k_{\mathfrak{0}}}\leq K$. The limit of $\eta_{\mathfrak{0}}(\mathfrak{r})$ as $\mathfrak{r}\to +\infty$ exists and is a positive number, thus $k_\mathfrak{0}$ is asymptotic to a cylindrical metric $d\mathfrak{r}^2+\eta_{\mathfrak{0}}(+\infty)^2d\theta^2$. It is not clear at this moment if $\eta_{\mathfrak{0}}(+\infty)=\frac{1}{\sqrt{\lambda_0}}$ holds, which is a piece of crucial information to the understanding of the convergence of the Hermitian metric $h_{\mathfrak{b}_i}$. 

Let $\mathfrak{r}_{\mathfrak{b}_i,\mathfrak{c}}= \int_{-\infty}^{t_{\mathfrak{b}_i,\mathfrak{c}}}\frac{1}{\sqrt{\lambda_{\mathfrak{b}_i}}} e^{\alpha u_{\mathfrak{b}_i}(t)}dt$ be the distance from $p_1$ to a point where $t=t_{\mathfrak{b}_i,\mathfrak{c}}$ under the metric $k_{\mathfrak{b}_i}$. According to the above volume estimate, for $i$ large enough
\begin{equation}
\begin{split}
\frac{2\pi N}{1-e^{-\mathfrak{c}}}
&\geq 
\Vol_{e^{2\alpha \Phi_{\mathfrak{b}_i}}g_{\mathfrak{b}_i}}
\left( 
(-\infty, t_{\mathfrak{b}_i,\mathfrak{c}}]\times S^1
\right)
=
2\pi \int_0^{\mathfrak{r}_{\mathfrak{b}_i,\mathfrak{c}}}  \eta_{\mathfrak{b}_i}(\mathfrak{r})d\mathfrak{r}\\
& \geq 2\pi \left( \mathfrak{r}_{\mathfrak{b}_i,\mathfrak{c}}-\delta_0\right)\frac{\delta_0}{4},
\end{split}
\end{equation}
where in the last inequality we use the fact that $\lim_{i\to +\infty} \eta_{\mathfrak{b}_i}(\delta_0)=\eta_{\mathfrak{0}}(\delta_0)\geq \frac{\delta_0}{2}$. This implies that $d_{g_{\mathfrak{b}_i}}\left(p_1, q\right)$ is uniformly bounded from above where $q\in \partial\left( |\bm\phi|_{h_{\mathfrak{b}_i}}^2\leq e^{-\mathfrak{c}}\right)\cap \{t<0\}$. On the other hand, since $\Phi_{\mathfrak{b}_i}(p_1)=0$ and $\Phi_{\mathfrak{b}_i}(q)=e^{-\mathfrak{c}}$, by the mean value inequality and the gradient estimate of the state function $\Phi_{\mathfrak{b}_i}$ (\cite[Corollary 4.6]{FPY}),
\[
e^{-\mathfrak{c}}\leq \sup_{\mathbf{P}^1}
\left|\nabla\Phi_{\mathfrak{b}_i}\right|_{k_{\mathfrak{b}_i}}\cdot d_{k_{\mathfrak{b}_i}}\left(p_1, q\right)
\leq \sqrt{\frac{3}{2\alpha}}\cdot d_{k_{\mathfrak{b}_i}}\left(p_1, q\right),
\]
therefore $d_{k_{\mathfrak{b}_i}}\left(p_1, q\right)=\mathfrak{r}_{\mathfrak{b}_i,\mathfrak{c}}$ is uniformly bounded from below. Geometrically, this says that the circle $t=t_{\mathfrak{b}_i,\mathfrak{c}}$ in $\mathbf{P}^1$ is at a controllable distance from $p_1$ under the metric $k_{\mathfrak{b}_i}$. 

Next, we want to show that the limit $u_{(\infty)}$ on $t\in (-\infty,+\infty)$ (obtained in analysis way) and the limit $\eta_{\mathfrak{0}}$ on $\mathfrak{r}\in [0,+\infty)$ (obtained in geometric way) could be patched together to give a global solution of \eqref{eqn:bubbleODE}. 

Consider the function $\psi_\mathfrak{b}:\left(-\infty, +\infty\right)\longrightarrow (0,+\infty)$ defined by
\[
\mathfrak{r}=\psi_\mathfrak{b}(t)
= 
\int_{-\infty}^t \frac{1}{\sqrt{\lambda_\mathfrak{b}}} e^{\alpha u^\mathfrak{b}(\tau)}d\tau, 
\]
then $\eta_\mathfrak{b}(\mathfrak{r})= \frac{1}{\sqrt{\lambda_\mathfrak{b}}} e^{\alpha u^\mathfrak{b}\left( \psi_\mathfrak{b}^{-1}(\mathfrak{r})\right) }$ and  for $t\in (-\infty,+\infty)$,
\begin{equation}
\label{eqn:relations}
\gamma_{e^{t_{\mathfrak{b},\mathfrak{c}}}}^*u^\mathfrak{b}(t)
=\frac{1}{\alpha} \log \sqrt{\lambda_\mathfrak{b}} \eta_\mathfrak{b}\left( \psi_\mathfrak{b}\left( t+t_{\mathfrak{b},\mathfrak{c}}\right)\right)
=\frac{1}{\alpha} \log \sqrt{\lambda_\mathfrak{b}} \eta_\mathfrak{b}\circ \gamma_{e^{t_{\mathfrak{b},\mathfrak{c}}}}^*\psi_\mathfrak{b}(t).
\end{equation}
 The function  $\gamma_{e^{t_{\mathfrak{b},\mathfrak{c}}}}^*\psi_\mathfrak{b}(t)$ from $(-\infty, +\infty)$ to $(0,+\infty)$ can be viewed as a transition map between the two coordinate charts $(t,\theta)$ and $(\mathfrak{r},\theta)$ of the manifold $X=\mathbf{R}^2$. Since for any fixed $T>0$,
\begin{itemize}
	\item $\gamma_{e^{t_{\mathfrak{b},\mathfrak{c}}}}^*\psi_\mathfrak{b}(0)= \mathfrak{r}_{\mathfrak{b},\mathfrak{c}}$ is uniformly bounded; 
	\item 
	$\frac{d}{dt}\gamma_{e^{t_{\mathfrak{b},\mathfrak{c}}}}^*\psi_\mathfrak{b}(t)= \frac{1}{\sqrt{\lambda_\mathfrak{b}}}e^{\alpha\cdot \gamma_{e^{t_{\mathfrak{b},\mathfrak{c}}}}^*u^\mathfrak{b}(t)}$ is uniformly bounded above by $\frac{1}{\sqrt{\lambda_\mathfrak{b}}}$ andy below by $\frac{1}{\sqrt{\lambda_\mathfrak{b}}}e^{-T}$ according to  \eqref{est:ufunction} on $[-T,T]$;
	\item  
	$\frac{d^2}{dt^2}\gamma_{e^{t_{\mathfrak{b},\mathfrak{c}}}}^*\psi_\mathfrak{b}(t)=\alpha \cdot \gamma_{e^{t_{\mathfrak{b},\mathfrak{c}}}}^*\psi_\mathfrak{b}(t)\cdot \frac{d}{dt}\gamma_{e^{t_{\mathfrak{b},\mathfrak{c}}}}^*u^\mathfrak{b}(t)$ and similarly any higher order derivatives of $\gamma_{e^{t_{\mathfrak{b},\mathfrak{c}}}}^*\psi_\mathfrak{b}(t)$ are uniformly bounded on $[-T,T]$, 
\end{itemize}
together with a diagonal argument we can take a subsequential limit $ \widetilde \psi_\mathfrak{0}$ of $\gamma_{e^{t_{\mathfrak{b},\mathfrak{c}}}}^*\psi_\mathfrak{b}$ on $(-\infty,+\infty)$, whose convergence is in $C^3$ sense on any subinterval $[-T,T]\subset (-\infty,\infty)$. By the previous two convergence results, one for $\eta_{\mathfrak{b}}$ and one for $\gamma_{e^{t_{\mathfrak{b},\mathfrak{c}}}}^*u^\mathfrak{b}(t)$, and the relation \eqref{eqn:relations} between the two functions, we conclude that 
\begin{equation}
u_{(\infty)}(t)
= 
\frac{1}{\alpha}\log \sqrt{\lambda_0} \eta_\mathfrak{0}\circ \widetilde \psi_\mathfrak{0}(t)
\end{equation}
where $\widetilde \psi_\mathfrak{0}(t)=\int_{-\infty}^t \frac{1}{\sqrt{\lambda_0}} e^{\alpha u_{(\infty)}(\tau)}d\tau$. 

In conclusion, the smooth Riemannian metric 
\[
g_{(\infty)}
=
\frac{1}{\lambda_0}e^{2\alpha \left( u_{(\infty)}(t)-e^{u_{(\infty)}(t)}\right)} \left( dt^2+d\theta^2\right)
\]
on $\mathbf{R}\times S^1$ coincides with the $C^{2,\beta}$ Riemannian metric 
\[
e^{-2\alpha  e^{u_{(\infty)} \left( \widetilde\psi_\mathfrak{0}^{-1}(\mathfrak{r}) \right)} }\left( d\mathfrak{r}^2 + \eta_\mathfrak{0}(\mathfrak{r})^2 d\theta^2\right)
\]
on $\mathbf{R}^2$ in their intersecting region, i.e. $\mathbf{R}^2\backslash \{0\}$. As a consequence, this shows $\lim_{t\to -\infty} u_{(\infty)}'(t)=2N'$. As in the derivation of \eqref{est:gradient}, we get the formula
\[
4N'^2-\left(\frac{du_{(\infty)}}{dt}\right)^2
=
\frac{1}{\alpha\lambda_0} e^{2\alpha\left( u_{(\infty)}(t)-e^{u_{(\infty)}(t)}\right)}
\] 
and the rest arguments as in section \eqref{section:regularity} shows that this Riemannian metric is actually smooth on $\mathbf{R}^2$, and is actually K\"ahler with respect to the standard complex structure on $\mathbf{R}^2$. Hence, we obtain an Einstein-Bogomol'nyi metric $\left( g_{(\infty)}, h_{(\infty)}\right)$ on $\mathbf{C}$.

By multiplying $\frac{d}{dt}u^{(\infty)}$ to both sides of  \eqref{eqn:ODEtranslate} (with parameter $\lambda_0$) and integrating on $(-\infty,0]$, we obtain that 
\begin{equation}
\label{eqn:determinedderivative}
\frac{d}{dt}|_{t=0}u_{(\infty)}
= 
\sqrt{ 4N'^2 - \frac{1}{\alpha\lambda_0}e^{-2\alpha\left(\mathfrak{c}+e^{-\mathfrak{c}}\right)}}  
= 
2N'\sqrt{ 1 - e^{2\alpha\left(1-\mathfrak{c}-e^{-\mathfrak{c}}\right)}}. 
\end{equation}
The solution $u_{(\infty)}$ we constructed above depends on the particular choice of $\mathfrak{c}\in (0,+\infty)$. However, since the derivative of the solution at $t=0$ is determined by the value of the solution at $t=0$, different choices of $\mathfrak{c}$ gives solutions which coincide with each other under suitable translation. In other words, there is geometrically only \emph{one} Einstein-Bogomol'nyi metric (as a solution of \eqref{eqn:bubbleODE}) constructed in this way. 

Similar to \eqref{eqn:determinedderivative}, by integrating over $[t,+\infty)$ we obtain that $u=u_{(\infty)}$ is a solution to the first order ODE 
\begin{equation}
\frac{du}{dt}
= 
2N'\sqrt{ 1- e^{2\alpha\left( 1+ u-e^u\right)}}
\end{equation}
on $\mathbf{R}$ together with the condition $u(0)=-\mathfrak{c}, \lim_{t\to -\infty} u(t)=-\infty$. By Lagrange Mean Value Theorem,  $\forall t\in [0,+\infty)$, there exists $-2\alpha \left( e^{u(t)}-(1+u(t))\right)<\xi<0$ and $u(t)<\eta<0$ such that
\begin{equation*}
\begin{split}
\frac{du}{dt}
& =
2N'\sqrt{e^\xi \cdot 2\alpha \left(e^u-(1+u)\right)}= 
2N'\sqrt{ \alpha e^{\xi+\eta} u^2}.
\end{split}
\end{equation*}
It follows that on $[0,+\infty)$
\begin{equation*}
2N'\sqrt{\alpha} e^{-\alpha\left( e^{-\mathfrak{c}}-(1-\mathfrak{c})\right)-\frac{\mathfrak{c}}{2}}\left(-u\right)
\leq\frac{du}{dt}
\leq 2N'\sqrt{\alpha}\left(-u\right), 
\end{equation*}
and as a consequence for any $t\in [0,+\infty)$,
\begin{equation}
\mathfrak{c} e^{-2N'\sqrt{\alpha}t}
\leq 
-u(t)
\leq 
\mathfrak{c} e^{-2N'\sqrt{\alpha}e^{-\alpha\left( e^{-\mathfrak{c}} -(1-\mathfrak{c})\right)-\frac{\mathfrak{c}}{2}}t}.
\end{equation}

Denote $g_C=2N'\left(dt^2+d\theta^2\right)$ the flat metric on the cylinder $\mathbf{R}\times S^1$ with circumstance $4N'\pi$. According to the formula of $g_{(\infty)}$, 
\begin{equation*}
\begin{split}
g_{(\infty)} - g_C
& = 
 \left( e^{-2\alpha\left( e^{u} - (1+u) \right)}  - 1 \right)g_C\\
 iF_{h_{(\infty)}}
 & = 
 \frac{1}{2}\left( 1- e^u\right) \omega_{(\infty)}. 
 \end{split}
\end{equation*}
This shows that $g_{(\infty)}-g_C=O\left(u^2\right)$ and $iF_{h_{(\infty)}}=O(u)$ as $t\to +\infty$. Inductively, one shows that $\left|\nabla_{g_C}^\ell\left( g_{(\infty)} - g_C\right)\right|_{g_C}=O(u^2)$ and $\left| \nabla_{g_C}^\ell F_{h_{(\infty)}}\right|_{g_C}=O(u)$ for each $\ell\geq 1$, i.e. $g_{(\infty)}$ is asymptotic to $g_C$ at \emph{exponential} rate, and $iF_{h_{(\infty)}}$ and its higher derivatives are \emph{exponentially decaying}. 

Direct computation
\[
\int_{\mathbf{C}} iF_{h_{(\infty)}} 
= 
\lim_{t_1\to 0, t_2\to +\infty} \int_{[t_1,t_2]\times S^1} - i\partial\bar\partial u_{(\infty)}
= 
\lim_{t_1\to 0, t_2\to +\infty}\int_{t_1}^{t_2}\int_0^{2\pi} - \frac{1}{2}u_{(\infty)}''(t)dt\wedge d\theta
= 
2\pi N'
\]
tells us that the total string number of this Einstein-Bogomol'nyi metric is $N'$. The fact that $e^{u_{(\infty)}}$ is asymptotic to constant $1$ means the system is asymptotically \emph{superconducting} as $\bm\phi$ is the order parameter and $|\bm\phi|_{h}^2$ represents the density of Cooper pairs responsible for superconducting.

\begin{remark}
Yang \cite{Yang4} showed the nonexistence of $S^1$ symmetric Einstein-Bogomol'nyi metrics with all strings located at one point on $\mathbf{P}^1$. This is not a contradiction to what we obtained here since the Einstein-Bogomol'nyi metric constructed above cannot be compactified to a smooth Einstein-Bogomol'nyi metric on $\mathbf{P}^1$ whose Higgs field only vanishes at one point. This asymptotically cylindrical Einstein-Bogomol'nyi metric obtained here was originally found by Linet \cite{Linet}, and the way of presentation here is aimed to make it fit more naturally in the study of the moduli space of Einstein-Bogomol'nyi metrics on compact surface.
\end{remark}

\begin{remark}
	In analogy to the remark about the dependence of $\lambda$ and $V$ in the case $\bm\phi$ is stable, we have a conjectured relation of $V$ and $\lambda$ for strictly polystable $\bm\phi$ (see Figure \ref{fig:temper-strictly-polystable}). This graph is based on the study of $S^1$ symmetric solutions. 
	\begin{figure}[htb]
		\label{fig:temper-strictly-polystable}
		\begin{tikzpicture}[samples=200,domain=0:4]
		\draw[->] (-1.2,0) -- (5,0) node[right] {$volume\;\; V$};
		\draw[dashed] (-1, 2)  node[left] {} -- (5,2);
		\draw[fill=orange,color=black] (-1.1,2.2)  node[left]{critical $\lambda_c=\frac{1}{Ne^{2\alpha}}$};
		\draw[fill=orange,color=blue] (0,0) circle (1pt) node[below]{$\underline V$};
		\draw[fill=orange,color=black] (-1.2,0)  node[below]{$o$};
		\draw[->] (-1,-0.2) -- (-1,3) node[above] {$temper\;\;\lambda$};
		\draw[color=red, <-, domain=0:4.8]  plot (\x, 2-2/(\x+1) node[right] {$\lambda$};
		\end{tikzpicture}
		\caption{Conjectured relation of $V$ and $\lambda$ for strictly polystable $\bm\phi$}
	\end{figure}
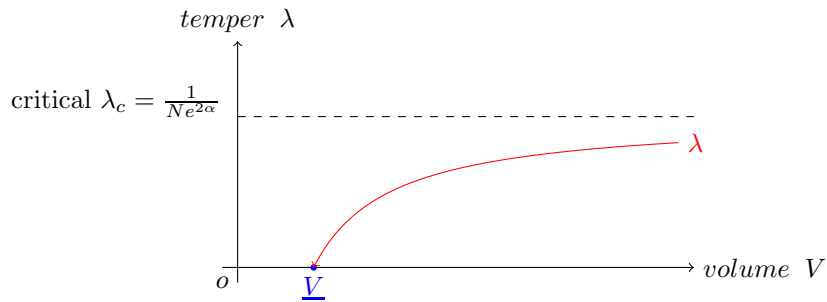
\end{remark}

\subsection{The solution of Chen-Hastings-McLeod-Yang on $\mathbf{C}$}
We are interested in constructing an Einstein-Bogomol'nyi solution with asymptotically conical behavior of the metric, which 
represents a smoothing of the \emph{delta-conical} EB solution.

Chen-Hastings-McLeod-Yang \cite[Theorem 3.2]{CHMcY}, studied the following  ODE (with parameter $\lambda>0$) for $u:\mathbf{R}^+\to \mathbf{R}$:
\begin{equation}
\label{eqn:CHMY}
\begin{split}
u_{rr}+ \frac{1}{r} u_r
& = 
-r^{-2aN}f(u,a,\lambda)\\
u(r)
& = 
2N \log r + s + o(1) , \;\; \text{as }r\to 0.
\end{split}
\end{equation}
where $f(u,a, \lambda)=\frac{1}{\lambda} e^{a(u-e^u)}(1-e^u)$. In the situation $aN\in [0,1)$, it is showed that there exists a unique $s_*\in \mathbb{R}$ such that the (unique existing global) solution to the above system satisfies 
\[
\lim_{r\to +\infty} u(r) = 0, 
\] 
Actually, this function is proved (\cite[Equation (5.11), (5.12)]{CHMcY}) to further satisfy 
\[
u(r)= O(r^{-\kappa}), \;\; u'(r) = O(r^{-\kappa}) , \; r\to +\infty
\]
for any $\kappa>0$.  Combined with equation \eqref{eqn:CHMY}, it holds that 
\begin{equation*}
\frac{d^\ell}{dr^\ell}u(r)
= O(r^{-\kappa}), \;\; r\to +\infty
\end{equation*}
for any given $\kappa>0$.

Under the standard trivialization of the trivial line bundle $\mathcal{O}$, define a holomorphic section $\bm\phi$ which is represented by $z^N$ in this trivialization. Define a Hermitian metric $h$ (which is represented by a positive function $H$ on $\mathbb{C}$):

\begin{equation}
\begin{split}
H (z)
&= 
\frac{|\bm\phi|_h^2}{|\bm\phi|^2}
=\frac{e^u}{|z^N|^2}
= r^{-2N} e^{u(r)} 
\sim |z|^{-2N}\;\;,\;\; z\to \infty.\\
H(0)
& = 
e^s.
\end{split}
\end{equation}
In the meantime, define a Riemannian metric:
\begin{equation}
\begin{split}
g
& = 
\frac{1}{\lambda} r^{-2aN}e^{a(u-e^u)}g_{euc}, 
\end{split}
\end{equation}
which is conformal to the standard Euclidean metric and is complete. This implies $g$ is K\"ahler with respect to the standard complex structure $J_0$ on $\mathbf{R}^2$, so denote its K\"ahler form by $\omega$. 

The pair $(\omega, h)$ satisfies 
\begin{equation}
\left\{
\begin{array}{ll}
iF_h + \frac{1}{2}\left( |\bm\phi|_h^2 -1\right)\omega 
& =0, \\
\text{Ric }\omega - a i\partial\bar\partial |\bm\phi|_h^2 - a iF_h 
& = 0, 
\end{array}
\right.
\end{equation}
which is exactly the Einstein-Bogomol'nyi equation we've considered with $\alpha=\frac{a}{2}$ and $\tau=1$.

The scalar curvature of $\omega$ is 
\begin{equation}
\begin{split}
S_\omega 
& =2\alpha \left|\nabla^{1,0}\bm\phi\right|_h^2 + \alpha\left(\tau-\left| \bm \phi\right|_h^2\right)^2
=
\alpha \frac{ \left|\nabla \left|\bm\phi\right|_h^2\right|_g^2}{\left|\bm\phi\right|_h^2}
+ 2\left( \tau-\left|\bm\phi\right|_h^2\right)^2 \\
& = 
\alpha \frac{\left|\nabla e^u\right|_g^2}{e^u}
+ 
\alpha \left(\tau - e^u\right)^2\\
& = 
\frac{a}{2}\left(  e^u \left|\nabla u\right|_g^2 
+ \left(1-e^u\right)^2
\right). 
\end{split}
\end{equation}
It follows from the decaying estimate of $u$ and $u'$ above that 
\[
S_\omega= O(r^{-\kappa}), \;\;
 \left| iF_h\right|_h= O(r^{-\kappa}), \;\; 
 \left| \bm \phi\right|_h^2 = 1 + O(r^{-\kappa}).
\]

Denote $\widehat g_{(\beta)} = \mathrm{d}\mathfrak{r}^2 +\beta^2 \mathfrak{r}^2\mathrm{d}\theta^2$ the standard flat cone metric with angle $2\pi\beta= 2\pi (1-aN)$ on $\mathbf{C}$. Define a diffeomorphism (inclusion)
\[
\begin{split}
F: \mathbf{C}^*
& \longrightarrow \mathbf{C}\\
\left(\mathfrak{r},\theta\right) 
& \mapsto 
\left( 
r,\theta
\right)
= 
\left( 
\left[
\frac{\lambda^\frac{1}{2}(1-aN)}{e^{-\frac{a}{2}}}
\right]^\frac{1}{1-aN} \mathfrak{r}^\frac{1}{1-aN} 
\right), 
\end{split}
\] 
then $F^*g= e^{a\left( \tilde u(\mathfrak{r}) - e^{\tilde u\left(\mathfrak{r}\right)}+1\right)}
\left[ d\mathfrak{r}^2 + \beta^2 \mathfrak{r}^2 d\theta^2\right]$ with $\tilde u(\mathfrak{r}) = u(r)$. Using the above decaying estimate on $u^{(\ell)}(r)$ it is not difficult to show that for any $\mathfrak{r}_0>0$, 
\begin{equation}
\begin{split}
\frac{\partial^\ell}{\partial\mathfrak{r}^\ell} 
\left( F^*g- \widehat g_{(\beta)}\right)
= 
O(\mathfrak{r}^{-\kappa}), \;\; \mathfrak{r}>\mathfrak{r}_0, 
\end{split}
\end{equation}
i.e. $g$ is asymptotic to $\widehat g_{(\beta)}$ faster than any polynomial rate. Since $g$ is conformal to $g_{euc}$, let $J_\mathbf{C}$ be the standard complex structure on $\mathbf{C}$ sending $dr$ to $-rd\theta$, then $\left(g,J_\mathbf{C}\right)$ is K\"ahler whose K\"ahler form is denoted as $\omega$, then $F^*J_\mathbf{C}= J_{\widehat{\mathbf{C}}_{(\beta)}}$ where 
$J_{\widehat{\mathbf{C}}_{(\beta)}}$ sending $d\mathfrak{r}$ to $-\beta\mathfrak{r}d\theta$ is the complex structure on $\mathbf{C}^*$ making $\widehat g_{(\beta)}$ K\"ahler. Similarly to $F^*g$ we have $F^*\omega = e^{a\left( \tilde u(\mathfrak{r}) - e^{\tilde u\left(\mathfrak{r}\right)}+1\right)} \widehat\omega_{(\beta)} $, and 
\[
\frac{\partial^\ell}{\partial\mathfrak{r}^\ell} 
\left( F^*\omega- \widehat \omega_{(\beta)}\right)
= 
O(\mathfrak{r}^{-\kappa}), \;\; \mathfrak{r}>\mathfrak{r}_0.
\]

We want to turn the the behavior of the structures on the holomorphic vector bundles. Define a bundle map $\widetilde F: \mathbf{C}^*\times \mathbf{C}\longrightarrow \mathbf{C}\times \mathbf{C}$ sending $\left( (\mathfrak{r},\theta) , \xi\right)$ to $\left( F(\mathfrak{r},\theta), \left[ \left( \lambda^\frac{1}{2}\beta e^\frac{a}{2} \right)^\frac{1}{\beta} \mathfrak{r}^{\frac{1}{\beta} -1}\right]^N\cdot \xi\right)=\left( F(\mathfrak{r},\theta), \xi'\right)=\left( F(w), \left(\frac{F(w)}{w}\right)^N\cdot \xi\right)$, where the second components are the fiber coordinates of the trivial holomorphic line bundles on the corresponding base manifolds $\mathbf{C}^*$ and $\mathbf{C}$. The holomorphic section $\bm\phi$ is pulled back to $\widetilde F^*\bm\phi$, which at $w=\left(\mathfrak{r},\theta\right)$ takes the value $\widetilde F^{-1}\left( \phi \left( F(\mathfrak{r},\theta) \right)\right)= \left( \frac{w}{F(w)}\right)^N \cdot \phi \left( F(w)\right)=w^N$ under the above trivialization. Since $\widetilde F$ is a holomorphic map, it pulls the Hermitian metric $h$ back to a Hermitian metric $\widetilde F^*h$ and Chern connection to the Chern connection. The norms of the holomorphic sections satisfy 
\begin{equation}
\left| \widetilde F^*\bm\phi\right|_{\widetilde F^*h}^2 
= 
F^*\left|\bm\phi\right|_h^2
=
e^{\tilde u(\mathfrak{r})}
= 
1+ O(\tilde u(\mathfrak{r})),
\end{equation}
and the curvature forms satisfy 
\begin{equation}
\begin{split}
iF_{\widetilde F^*h}
& =
 F^*\left( iF_h\right) \\
&= 
F^*\left( \frac{1}{2}\left(1-|\bm\phi|_h^2\right)\omega\right)\\
& = 
\frac{1}{2}\left( 1- e^{\tilde u(\mathfrak{r})}\right) e^{a\left( \tilde u(\mathfrak{r})-e^{\tilde u(\mathfrak{r})}+1\right)} \widehat \omega_{(\beta)} \\
& = 
O\left( \tilde u(\mathfrak{r})\right)\cdot\widehat \omega_{(\beta)}.
\end{split}
\end{equation}
We define a Hermitian metric $\widehat{h}_{(N)}$ on the trivial bundle over $\mathbf{C}^*$ by $\widehat h_{(N)}(w):=\frac{1}{\left|w^N\right|^2}$, then obviously its curvature form is identically $0$ on $\mathbf{C}^*$. However, it can be viewed as a ``singular'' Hermitian metric over $\mathbf{C}=\mathbf{C}^*\cup \{0\}$ whose curvature form satisfies $iF_{\widehat h_{(N)}}= - i\partial\bar\partial \log \frac{1}{|w^N|^2}=2\pi N[0]
$ on $\mathbf{C}$ as a current. Under those definitions, the Einstein-Bogomol'nyi solution $(\omega,h)$ is asymptotic to the \emph{singular} Einstein-Bogomol'nyi metric $\left(\widehat \omega_{(\beta)}, \widehat h_{(N)}\right)$ with the Higgs field $\widehat{\bm\phi}=w^N$.

Parallel to the above above asymptotic analysis, we can do the analysis on the \emph{blowing down} limit of the geometric structure $(g,h)$. Let $\epsilon_i$ be a positive sequence converging to $0$, and $\iota_{\epsilon_i^{-1}}:\widehat{\mathbf{C}}_{(\beta)}\longrightarrow \widehat{\mathbf{C}}_{(\beta)}$ be the dilation map $(\mathfrak{s},\theta)\mapsto \left( \epsilon_i^{-1}\mathfrak{s},\theta\right)$. Under this map, $\iota_{\epsilon_i^{-1}}^*\widehat\omega_{(\beta)}=\epsilon_i^{-2}\widehat\omega_{(\beta)}$. Respectively, the bundle map $\widetilde\iota_{\epsilon_i^{-1}}: \widehat{\mathbf{C}}_{(\beta)}\times \mathbf{C}\longrightarrow \widehat{\mathbf{C}}_{(\beta)}\times \mathbf{C}$ sending $\left( (\mathfrak{s},\theta), \xi''\right)$ to $\left( (\epsilon_i^{-1}\mathfrak{s},\theta), \epsilon_i^{-N}\xi'' \right)$ is holomorphic.  Moreover, $\widetilde\iota_{\epsilon_i^{-1}}^*\widehat{\bm\phi}(u)=u^N=\widehat{\bm\phi}(u)$ and $\widetilde\iota_{\epsilon_i^{-1}}^*\widehat h_{(N)}(u)=\frac{1}{|u^N|^2}= \widehat h_{(N)}(u)$ where $u=\mathfrak{s}e^{i\theta}$, i.e. the Higgs field $\widehat{\bm\phi}$ and the singular Hermitian metric $\widehat{h}_{(N)}$ are both dilation invariant. Under the trivialization we are using, the connection form of $\widehat h_{(N)}$ is given by $A_{\widehat h_{(N)}}= \partial \log \widehat h_{(N)}=-N\frac{dw}{w}$.

Let $F_i=F\circ \iota_{\epsilon_i^{-1}}$ and $\widetilde F_i = \widetilde F\circ \widetilde \iota_{\epsilon_i^{-1}}$, then $F_i^*\left( \epsilon_i^2 g\right)
=
e^{a(\tilde u_i(\mathfrak{s}) - e^{\tilde u_i(\mathfrak{s})}+1)}
\left[
\mathrm{d}\mathfrak{s}^2 + \beta^2 \mathfrak{s}^2\mathrm{d}\theta^2
\right]$ 
where 
\[
\tilde u_i(\mathfrak{s})
=
u\left(
\left[
\lambda^\frac{1}{2}\beta e^{\frac{a}{2}}
\right]^\frac{1}{\beta} \left( \epsilon_i^{-1}\mathfrak{s}\right)^\frac{1}{\beta} 
\right). 
\]

Using the big ``$O$'' notation to mean a bounded quantity independent of $i$. From the formula, we see that for any rate $\kappa>0$ and radius $\mathfrak{s}_0>0$,
\begin{equation}
\begin{split}
F_i^*\left( \epsilon_i^2g\right) - \widehat g_{(\beta)}
& = 
O(\tilde u_i(\mathfrak{s})^2) 
= 
O\left( \epsilon_i^\frac{\kappa}{\beta} \mathfrak{s}^{-\frac{\kappa}{\beta}}\right), \;\; \mathfrak{s}\geq \mathfrak{s}_0,\\
\frac{\partial^\ell}{\partial\mathfrak{s}^\ell} 
\left( F_i^*\left(\epsilon_i^2g\right)- \widehat g_{(\beta)}\right)
& = 
O\left( \epsilon_i^\frac{\kappa}{\beta} \mathfrak{s}^{-\frac{\kappa}{\beta}}\right), \;\; \mathfrak{s}\geq\mathfrak{s}_0.
\end{split}
\end{equation}
Moreover, for any $\kappa>0$ and radius $\mathfrak{s}_0>0$,
\begin{equation}
\begin{split}
F_i^*\bm\phi 
& =  \widehat{\bm\phi}, \;\; 
F_i^*\left|\bm\phi\right|_h^2 
= 
\left| \widetilde F_i^*\bm\phi\right|_{\widehat F_i^*h}^2 = e^{\tilde u_i(\mathfrak{s})}
= 1 + O(\tilde u_i(\mathfrak{s}))
= 1+ O\left( \left( \epsilon_i^{-1}\mathfrak{s}\right)^{-\frac{\kappa}{\beta}}\right),\\
F_i^*\left( iF_h\right)
& = 
i F_{\widetilde F_i^*h}
= 
\frac{1}{2\epsilon_i^2}\left( 1- e^{\tilde u_i(\mathfrak{s})}\right) \cdot 
e^{a\left( \tilde u_i(\mathfrak{s})-e^{\tilde u_i(\mathfrak{s})}+1\right)}\widehat \omega_{(\beta)}
= 
O\left( \epsilon_i^{-2} \tilde u_i(\mathfrak{s})\right)
= 
O\left( \epsilon_i^{-2} \left(\epsilon_i^{-1}\mathfrak{s}\right)^{-\frac{\kappa}{\beta}}\right),\\
\frac{\partial^\ell}{\partial \mathfrak{s}^\ell}  F_i^*\left( iF_h\right) 
& = 
O\left( \epsilon_i^{-2} \left(\epsilon_i^{-1}\mathfrak{s}\right)^{-\frac{\kappa}{\beta}}\right).
\end{split}
\end{equation}

This in particular proves that the tangent cone at $\infty$ of the metric $g$ exists and is unique, which is $\widehat g_{(\beta)}$. In the meantime, the flux current of $h$ measured under the rescaled underlying metric is 
\begin{equation}
iF_{h}
=
\frac{1}{2\epsilon_i^2}(1-e^{u_i(\mathfrak{r})}) \text{dvol}_{g_i}
\longrightarrow 2\pi N[\bm 0]. 
\end{equation}

By the fact that $u$ defines a topological solution, we have the total magnetic flux
\begin{align*}
\int_{\mathbf{C}} iF_h
& = 
\int_{\mathbf{C}\backslash\{\bm 0\}} 
- i\partial\bar\partial \log |\bm\phi|_h^2 \\
& = 
\frac{1}{2}
\int_{\mathbf{R}^2\backslash\{(0,0)\}} \Delta_{g_{euc}} u\; \text{dvol}_{euc}\\
& = 
\pi \int_0^{+\infty} r^{1-2aN} f(u,a,\lambda)\mathrm{d}r\\
& = 
2\pi N.
\end{align*}
This can also be verified directly by Stokes Theorem. For any $R>\delta>0$, 
\begin{equation}
\begin{split}
\int_{B(0,R)\backslash B(0,\delta)} iF_h 
& = 
\int_{B(0,R)\backslash B(0,\delta)} -i\partial\bar\partial \log |\bm\phi|_h^2\\
& = 
\int_{\partial B(0,R)}-i\bar\partial u + \int_{B(0,\delta)} i\bar\partial \left( \log \left| z^N\right|^2 +\log h(z)\right).
\end{split}
\end{equation}
The first term is controlled by $|\nabla_{g_{euc}} u|\cdot 2\pi R = O(R^{-\kappa+1})$, the second term is equal to $2\pi N$ and the third term is controlled by $|\nabla_{g_{euc}} \log h| \cdot 2\pi \delta =O(\delta)$. By choosing $\kappa>1$ and letting $R\to +\infty, \delta\to 0$, this also concludes that the total magnetic flux is $2\pi N$. Similarly, we can compute the total scalar curvature: 
\begin{equation}
\begin{split}
\int_{B(0,R)} S_\omega \omega 
& = \int_{B(0,R)} \text{Ric }\omega \\
& = \int_{B(0,R)} ai\partial\bar\partial |\bm\phi|_h^2 + a iF_h\\
& = a\int_{\partial B(0,R)}  i e^u \bar\partial u 
+ a\int_{B(0,R)} iF_h.
\end{split}
\end{equation}
The first term is controlled by $e^u|\nabla_{g_{euc}} u|\cdot 2\pi R= O(R^{-\kappa+1})$, and by letting $R\to +\infty$ there holds that 
\[
\int_{\mathbf{C}} S_\omega \omega = 2\pi aN=2\pi (1-\beta).
\]

Geometrically, this shows that for the Einstein-Bogomol'nyi metric $\left(\omega, h, \bm\phi\right)$ with $S^1$ symmetry constructed by \cite{Linet,Yang}, the underlying Riemannian metric $\omega$ is asymptotically conical with conical angle $2\pi \left(1- aN\right)$, and $h$ is a Gaussian type Hermitian metric.  

We conjecture that the large volume limits of a family of Einstein-Bogomol'nyi metrics with fixed stable Higgs field, the Cheeger-Gromov limit based at $p_j$ will converges to one of the solution of Chen-Hasting-McLeod-Yang.

\section{Further discussion about Moduli space}

One of the main goals in the study of Einstein-Bogomol'nyi metrics is to understand the structure of the moduli space $\mathfrak{M}_{EB}\left(L,\tau\right)$ and $\mathfrak{M}_{EB}\left(L,\tau;V\right)$. It is now very reasonable to believe/conjecture the uniqueness (modulo automorphisms) of Einstein-Bogomol'nyi metrics for fixed $\bm\phi$ and $V$ as it is evidenced by the discussion of \cite[Section 5.3]{Al-Ga-Ga-P} and Theorem \ref{thm:isolated}.

The large volume limit in section \ref{sect:large} shows a very interesting link between the moduli space of Einstein-Bogomol'nyi metrics and the moduli space of \emph{Euclidean cone metric} on $S^2$ with total volume $2\pi$ and suitable cone angles, which is probably easier to understand. Moduli space of Euclidean cone metrics on $S^2$ with designated curvatures (or equivalently with designated cone angles) at more than $3$ points was studied by Thurston \cite{Th}. For instance, the moduli space has a natural K\"ahler metric, making it into a locally complex hyperbolic manifold, and the metric-completion of this natural K\"ahler metric gives rise to a complex hyperbolic cone-manifold whose singularities corresponds to collisions of the cone points \cite[Theorem 0.2]{Th}. See the related discussion in \cite{KN}. We expect that understanding towards such more classical moduli space would shed lights on the study of $\mathfrak{M}_{EB}(L,\tau)$. 

Any un-ordered tuple $\left( n_1, n_2, \cdots, n_d\right)\in \mathbf{N}_+^d$ satisfying $n_1+n_2+\cdots + n_d=N$ and $2n_j<N$ for each $j\in \{1, 2, \cdots, d\}$ is called \emph{a stable partition} of $N$ with length $d$. The particular partition $(N', N')\in \mathbf{N}_+^2$ with $N=2N'$ is called the \emph{strictly polystable partition} of $N$. A partition which is either stable or strictly polystable is called a polystable partition. Any holomorphic section $\bm\phi$ is said to be compatible with a partition $\mathfrak{p}$ if the tuple of multiplicities of the zeros of $\bm\phi$ is equal to $\mathfrak{p}$. Under this definition, a stable Higgs field is precisely a holomorphic section compatible with a stable partition, and a strictly polystable Higgs field is precisely a holomorphic section compatible with the strictly polystable partition.

For each polystable partition $\mathfrak{p}=\left( n_1, \cdots, n_d\right)$, the corresponding tuple of apex curvatures $$\frac{4\pi}{N}\left(n_1, \cdots, n_d\right)\in \left(0, 2\pi\right)^d$$ 
satisfy the numerical condition of Theorem 0.2 of \cite{Th} (actually Thurston was studying the more general situation of tuples of real numbers $\left(k_1, \cdots, k_d\right)\in \left(0,2\pi\right)^d$ with $k_1+\cdots+k_d=4\pi$). Let $\mathfrak{M}_{Th,\mathfrak{p}}$ be the moduli space of Euclidean cone metrics on $S^2$ with cone points of apex curvature $\frac{4\pi n_j}{N}$ (i.e. cone angle $2\pi\left(1-\frac{2n_j}{N}\right)$) and of total area $1$. 

Let $\mathfrak{M}_{EB,\mathfrak{p}}\left(L,\tau; V\right)$ be the space of Einstein-Bogomol'nyi metrics with volume $V$ whose Higgs field is compatible with the partition $\mathfrak{p}$. What we've shown about the large volume limit can be roughly phrased as follows: \emph{``for stable $\mathfrak{p}$,  $
\mathfrak{M}_{Th, \mathfrak{p}} \text{ is an adiabtic limit of }\mathfrak{M}_{EB,\mathfrak{p}}\left(L, \tau; V\right)/PSL(2,\mathbf{C})$ as $V\to +\infty$
in certain sense''}. The  phenomenon  of ``colliding'' cones points (which is responsible for the incompleteness of each $\mathfrak{M}_{Th,\mathfrak{p}}$) should have similar companion phenomenon of ``merging'' of  vortices in $\mathfrak{M}_{EB, \mathfrak{p}}(L, \tau; V)$, at least for $V$ large enough. 

To understand better the structure of $\mathfrak{M}_{EB}\left(L,\tau\right)$ or more generally the moduli space of solutions to K\"ahler-Yang-Mills equations \cite{AlGaGa2,Al-Ga-Ga-P}, it is inevitable to discuss the ``degeneration'' of the metric when the Higgs field varies, which represents a variation of complex structure \cite{FT} on the $SU(2)$ bundle over $\mathbf{P}^1\times \mathbf{P}^1$.  In this perspective, we should mention a general existence result about ``multiple strings'' Einstein-Bogomol'nyi metrics on $\mathbf{C}$ \cite[Theorem 10.4.1]{Yang4} since it might be related to the ``degeneration'' problem in the full moduli space $\mathfrak{M}_{EB}\left(L, \tau\right)$ when we allow $\bm\phi$ to vary along the sequence and volume goes to $+\infty$ simultaneously. In terms of the equation \eqref{eqn:GV0}, this existence result can be phrased as follows: for any holomorphic function $\bm\phi$ on $\mathbf{C}$ (which is a holomorphic section of the trivial line bundle on $\mathbf{C}$) vanishing at $p_1, \cdots, p_d$ with multiplicities $n_1, \cdots, n_d$ respectively,  finite energy solutions exist under the condition on the total string number 
\[
N':= n_1+\cdots + n_d<\frac{1}{\alpha\tau}.
\]
It is also showed that the underlying Riemannian metrics on $\mathbf{C}$ of those solutions are complete if and only if 
\[
N':=n_1+\cdots + n_d\leq \frac{1}{2\alpha\tau}.
\]
It follows from the \emph{quasi-isometry} relation \cite[Equation (113)]{Yang} of the obtained metric with standard model metric on complement of a compact set in $\mathbf{C}$ that the metric is asymptotically conical in case $N'<\frac{1}{2\alpha\tau}$, and asymptotically cylindrical in case $N'=\frac{1}{2\alpha \tau}$. Taking into consideration of all the results obtained in this article, we can make the following reasonable conjecture:

\begin{conjecture}
	Let $\left(\omega_i, h_i, \bm\phi_i\right)\in \mathfrak{M}_{EB}\left(L,\tau\right)$ be a sequence of Einstein-Bogomol'nyi metrics with $\bm\phi_i$ all being polystable, and $\bm\phi_i\to \bm\phi$ is also polystable.
	\begin{enumerate}
		\item If $\Vol_{\omega_i}\to V\in \left( \underline V, +\infty\right)$, then there exists some sequence $\sigma_i\in PSL(2,\mathbf{C})$ such that $\sigma_i^*\left( \omega_i, h_i, \bm\phi_i\right)$ converges to some $\left(\omega, h, \bm\phi\right)$;
		\item If $\Vol_{\omega_i} \to +\infty$, then any nonflat Cheeger-Gromov limit of the sequence can be realized as one of the ``multiple strings" solutions of Yang.
	\end{enumerate}
\end{conjecture}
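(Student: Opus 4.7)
The plan is to reduce both parts to Cheeger-Gromov compactness for the conformally rescaled family $k_i = e^{2\alpha\Phi_i}g_i$, leveraging the fact that the curvature bounds $|\mathrm{Rm}_{k_i}|, |\nabla\mathrm{Rm}_{k_i}|\leq C(\alpha,\tau)$ of \cite[Lemma 4.9]{FPY} and the injectivity-radius bound of Theorem \ref{thm:injectivity} are uniform in the Higgs field, so they apply verbatim in the moving-$\bm\phi_i$ setting.

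For part (1), I would first argue, as in Section 5.4 of \cite{FPY} and in Proposition \ref{prop:volume-diverge}, that the bounded volume of $g_i$ together with the nonnegativity of its Gaussian curvature gives a uniform upper bound on $\mathrm{diam}(\mathbf{P}^1, k_i)$. Cheeger-Gromov compactness modulo the $PSL(2,\mathbf{C})$-action then produces $\sigma_i$ with $\sigma_i^*k_i\to k_\infty$ in $C^{2,\beta}$. Next I would lift $\sigma_i$ to holomorphic bundle maps $\tilde\sigma_i$ and study the sections $\tilde\sigma_i^*\bm\phi_i$: their divisors $\sigma_i^{-1}[\bm\phi_i=0]$ subconverge in $S^N\mathbf{P}^1$, and the vanishing of the generalized Futaki invariant \cite{Al-Ga-Ga-P-Y} together with polystability of the limit $\bm\phi$ forbids a drop in stability, so after a further adjustment $\sigma_i\mapsto\sigma_i\rho_i$ with $\rho_i\in PSL(2,\mathbf{C})$ tending into the stabilizer of $[\bm\phi=0]$ and an absorption of fibrewise phases we can arrange $\tilde\sigma_i^*\bm\phi_i\to\bm\phi$. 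With the limit Higgs field fixed and $V_i\to V\in(\underline V,+\infty)$, Corollary \ref{cor:temper-lower-bound} (plus a parallel bound from the ODE analysis of Section 5 in the strictly polystable case) squeezes the temper $\lambda_i$ into a compact subinterval of $(0,+\infty)$, and elliptic bootstrapping on \eqref{eqn:HS-lambda-v} with the smoothly varying coefficient $|\bm\phi_i|_{h_0}^2$ upgrades convergence of the triples to $C^\infty$.

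For part (2), the plan is to carry the bubbling analysis of Section 4.3 through for base points $q_i\in\mathbf{P}^1$ chosen to realize a prescribed nonflat pointed Cheeger-Gromov limit of $(\mathbf{P}^1,k_i,q_i)$. The limit $(X,k_\infty,q_\infty)$ is a complete K\"ahler surface of nonnegative curvature; nonflatness together with the hyperbolic-disc nonexistence theorem proved in Section 4.3 forces $X\cong\mathbf{C}$. The state functions $\Phi_i$ subconverge in $W^{2,p}_{\mathrm{loc}}\cap C^{1,\gamma}_{\mathrm{loc}}$ to $\Phi_\infty\in[0,\tau]$ satisfying the limit of \cite[(4.25)]{FPY}, and the uniform integral bound $\int_{\mathbf{P}^1}(\tau-\Phi_i)\,\mathrm{dvol}_{k_i}\leq 4\pi N$ passes to $\int_X(\tau-\Phi_\infty)\,\mathrm{dvol}_{k_\infty}\leq 4\pi N$. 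Writing $\Phi_\infty=|\bm\phi_\infty|_{h_\infty}^2$ for a holomorphic section of the (trivial) limit line bundle on $\mathbf{C}$ and using that completeness of $\omega_\infty$ forces the total string number $N'$ of $\bm\phi_\infty$ to satisfy $N'\leq 1/(2\alpha\tau)$, Yang's existence theorem \cite[Theorem 10.4.1]{Yang4} will identify the triple $(\omega_\infty,h_\infty,\bm\phi_\infty)$ with one of his ``multiple strings'' solutions.

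The hard part in (1) will be the Higgs-field normalization step: showing that after $\sigma_i$ has been chosen to normalize the underlying metrics one can \emph{simultaneously} arrange $\tilde\sigma_i^*\bm\phi_i\to\bm\phi$ rather than to some other section in the $PSL(2,\mathbf{C})$-orbit closure of $\bm\phi$. This is a matching problem between two sequences of $PSL(2,\mathbf{C})$-data on $S^N\mathbf{P}^1$ and will presumably require a quantitative GIT orbit-closure analysis. The hard part in (2) will be ruling out $\Phi_\infty\equiv 0$ on $X$, i.e.\ showing that the bubble actually carries a nontrivial Higgs field and not merely a flat-vacuum K\"ahler limit; this is the non-concentration issue already flagged after Proposition \ref{cor:factor-converge}, and a positive answer seems to need an $\varepsilon$-regularity statement near each zero of $\bm\phi_i$. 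Uniqueness of the bubble within Yang's family, so that one recovers a \emph{specific} multi-string solution rather than merely \emph{some} one, is a further rigidity question I would not expect to settle without moment-map or ODE inputs of the kind used in Section 5.
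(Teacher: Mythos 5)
This statement is stated in the paper as a \emph{conjecture} and the paper supplies no proof of it; the surrounding text only assembles evidence (the injectivity radius bound of Theorem \ref{thm:injectivity}, the bubbling analysis of Section \ref{sect:large}, and Yang's existence theorem for multiple strings on $\mathbf{C}$). Your proposal is therefore not being measured against an argument in the paper, and on its own terms it is an outline rather than a proof: the three issues you yourself flag as ``hard parts'' are exactly the open problems that prevent the paper from upgrading the conjecture to a theorem, and your text does not supply arguments for any of them.

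Concretely, the gaps are these. In part (1), beyond the Higgs-field matching problem you identify, the uniform non-collapsing inputs you invoke are only established in the paper for a \emph{fixed} polystable $\bm\phi$: the temper lower bound of Corollary \ref{cor:temper-lower-bound} has a constant $\lambda_0(\bm\phi, V_1, V_2)$ whose behavior as $\bm\phi_i\to\bm\phi$ is not controlled, and the diameter bound underlying Proposition \ref{prop:volume-diverge} uses the GIT orbit-closure argument for a fixed divisor $D$; with a varying sequence of divisors $[\bm\phi_i=0]$ one must rule out that stability degenerates along the sequence faster than the metrics converge, which is precisely the quantitative orbit-closure analysis you defer. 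In part (2), the paper itself states after the bubbling analysis that ``one of the remaining difficulties is to show that $\Phi_\infty'$ is positive on $X\backslash\{q\}$,'' i.e.\ to exclude the flat-vacuum degeneration $\Phi_\infty\equiv 0$; without an $\varepsilon$-regularity statement this step fails, and even granting it, identifying the limit triple with one of Yang's solutions requires a classification (or at least a uniqueness modulo symmetry) of finite-energy Einstein-Bogomol'nyi metrics on $\mathbf{C}$ with prescribed string data, which is not available in the paper or the literature it cites. Your plan is a reasonable research program consistent with the paper's partial results, but it does not constitute a proof of the statement.
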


\begin{acknowledgements}
	This study was funded by the start up grant supported by ShanghaiTech University under the No. 2018F0303-000-03. The author would like to thank Professor Mingliang Cai and Oscar Garcia-Prada for some valuable discussions. He would also like to express the gratitude to Mario Garcia-Fernandez, Vamsi Pingali and Song Sun for their interests in the work. 
\end{acknowledgements}

\end{document}